\newcommand{\nn}{\nonumber}
\newcommand{\p}{\partial}
\newcommand{\no}[1]{\left\| #1 \right\|}
\newcommand{\what}{\widehat}
\renewcommand*{\dot}[1]{%
\accentset{\mbox{\small\bfseries .}}{#1}}
\declaretheoremstyle[headfont=\kpfonts]{normalhead}
\newtheorem{theorem}{Theorem}[section]
\newtheorem{proposition}{Proposition}[section]
\newtheorem{lemma}{Lemma}[section]
\theoremstyle{definition}
\newtheorem{remark}{Remark}[section]
\numberwithin{equation}{section}
\numberwithin{figure}{section}
\tikzset{->-/.style={decoration={
  markings,
  mark=at position #1 with {\arrow{>}}},postaction={decorate}}}
  \tikzset{middlearrow/.style={
        decoration={markings,
            mark= at position 0.55 with {\arrow{#1}} ,
        },
        postaction={decorate}
    }
}
\let\OLDthebibliography\thebibliography
\renewcommand\thebibliography[1]{
  \OLDthebibliography{#1}
  \setlength{\parskip}{0pt}
  \setlength{\itemsep}{0pt plus 0.3ex}
}
   \def\MR#1{}
\begin{document}

\title{Low-Regularity Solutions of the Nonlinear Schr\"odinger Equation \\ on the Spatial Quarter-Plane}

\author{\vspace*{-0.25cm}Dionyssios Mantzavinos$^\dagger$ \&  Türker Özsarı$^*$}

\address{
\normalfont $^\dagger$Department of Mathematics, University of Kansas 
\\
\normalfont $^*$Department of Mathematics, Bilkent University
}
 \email{\!mantzavinos@ku.edu \textnormal{(Corresponding author)}, turker.ozsari@bilkent.edu.tr}

\thanks{\textit{Acknowledgements.} DM is thankful to A. Alexandrou Himonas for extensive discussions related to the problem under consideration. DM's research is partially supported by  the U.S. National Science Foundation (NSF-DMS 2206270 and NSF-DMS 2509146) and the Simons Foundation (SFI-MPS-TSM-00013970). TÖ's research is supported by BAGEP 2020 Young Scientist Award. The authors are grateful to the reviewers for their careful assessment of the paper.
\\
\indent
\textit{Disclosure statement.} The authors report that there are no competing interests to declare.
\\
\indent
\textit{Data availability statement.} No data set is associated with this work.
}
\subjclass[2020]{Primary: 35Q55, 35G16, 35G31}
\keywords{Hadamard well-posedness, nonlinear Schr\"odinger equation, quarter-plane, two space dimensions, initial-boundary value problem, nonzero boundary conditions, low regularity, Sobolev spaces, Strichartz estimates, Bourgain spaces, unified transform, Fokas method}
\date{April 26, 2024. \textit{Revised}: July 11, 2025}

\begin{abstract}
The Hadamard well-posedness of the nonlinear Schr\"odinger equation with power nonlinearity formulated on the spatial quarter-plane is established in a low-regularity setting with Sobolev initial data and Dirichlet boundary data in appropriate Bourgain-type spaces. As both of the spatial variables are restricted to the half-line, a different approach is needed than the one previously used for the well-posedness of other initial-boundary value problems. In particular, now the solution of the forced linear initial-boundary problem is estimated \textit{directly}, both in Sobolev spaces and in Strichartz-type spaces, i.e. without a linear decomposition that would require estimates for the associated homogeneous and nonhomogeneous initial value problems. In the process of deriving the linear estimates, the function spaces for the boundary data are identified as the intersections of certain modified Bourgain-type spaces that involve spatial half-line Fourier transforms instead of the usual whole-line Fourier transform found in the definition of the standard Bourgain space associated with the one-dimensional initial value problem. The fact that the quarter-plane has a corner at the origin poses an additional challenge, as it requires one to expand the validity of certain Sobolev extension results to the case of a domain with a non-smooth (Lipschitz) and non-compact boundary.
\end{abstract}

\vspace*{-0.5cm}
\maketitle
\markboth
{D. Mantzavinos \& T. Özsarı}
{Low-regularity solutions of the nonlinear Schr\"odinger equation on the spatial quarter-plane}


\section{Introduction}

The nonlinear Schr\"odinger (NLS) equation is a universal model in mathematical physics, notably in nonlinear optics, water waves, plasmas, and Bose-Einstein condensates. The present work establishes the Hadamard well-posedness of the initial-boundary value problem for the NLS equation on the spatial quarter-plane:
\begin{equation}\label{qnls-ibvp}
\begin{aligned}
&iu_t + u_{x_1x_1} + u_{x_2x_2} = \pm |u|^{\alpha-1} u, \quad (x_1, x_2) \in \mathbb R_+^2, \ t \in (0, T),
\\
&u(x_1, x_2, 0) = u_0(x_1, x_2) \in H^s(\mathbb R_+^2),
\\
&u(x_1, 0, t) = g_0(x_1, t) \in \mathcal X_{1, T}^{0, \frac{2s+1}{4}} \cap \mathcal X_{1, T}^{s, \frac 14},
\\
&u(0, x_2, t) = h_0(x_2, t) \in \mathcal X_{2, T}^{0, \frac{2s+1}{4}} \cap \mathcal X_{2, T}^{s, \frac 14}.
\end{aligned}
\end{equation}
The space $H^s(\mathbb R_+^2)$ for the initial data is the $L^2$-based Sobolev space on the quarter-plane $\mathbb R_+^2 := \mathbb R_+ \times \mathbb R_+$. It is defined as the restriction on $\mathbb R_+^2$ of the Sobolev space
\begin{equation}\label{hsr2-def}
\begin{aligned}
H^s(\mathbb R^2)
= \left\{\varphi \in L^2(\mathbb R^2): \no{\varphi}_{H^s(\mathbb R^2)} := \no{(1+k_1^2+k_2^2)^{\frac s2} \mathcal F\{\varphi\}(k_1, k_2)}_{L^2(\mathbb R^2)} < \infty \right\},
\end{aligned}
\end{equation}
where $\mathcal F\{\varphi\}$ is the two-dimensional Fourier transform  of $\varphi$, i.e.
\begin{equation}
\mathcal F\{\varphi\}(k_1, k_2) := \int_{\mathbb R} \int_{\mathbb R} e^{-ik_1x_1-ik_2x_2} \varphi(x_1, x_2) dx_1 dx_2, \quad k_1, k_2 \in \mathbb R,
\end{equation}
and the corresponding restriction norm is given by 
\begin{equation}
\no{\phi}_{H^s(\mathbb R_+^2)}
:=
\inf\left\{\no{\varphi}_{H^s(\mathbb R^2)}: \varphi|_{\mathbb R_+^2} = \phi \right\}.
\end{equation}
We remark that, throughout this work, in the interest of clarity  the variable(s) with respect to which a norm is taken may not always be suppressed, e.g. as in the case of the $L^2$ norm with respect to $k_1, k_2$ in \eqref{hsr2-def}.
Moreover, for the purpose of local well-posedness studied in this work, the focusing (negative sign) and defocusing (positive sign) NLS equations in \eqref{qnls-ibvp} are treated simultaneously.

For $j=1, 2$, the space $\mathcal X_{j, T}^{\sigma, b}$ for the boundary data emerges naturally through the estimation of the relevant  components of the solution to the forced linear counterpart of problem \eqref{qnls-ibvp}. It is defined as the space of all functions $g(x_j, t) \in L^2(\mathbb R_+\times (0, T))$ such that the following norm is finite:
\begin{equation}\label{xt-def}
\no{g}_{\mathcal X_{j, T}^{\sigma, b}} := \left(\int_{\mathbb R} \left(1+k_j^2\right)^\sigma \no{e^{ik_j^2 t} \, \what g^{x_j}(k_j, t)}_{H_t^b(0, T)}^2 dk_j \right)^{\frac 12},
\end{equation}
where $\what g^{x_j}(k_j, t)$ denotes the \textit{half-line Fourier transform} of $g(x_j ,t)$ with respect to the spatial variable $x_j$, i.e.
\begin{equation}\label{hl-ft-def}
\what g^{x_j}(k_j, t) := \int_{\mathbb R_+} e^{-ik_j x_j} g(x_j, t) dx_j.
\end{equation}
It is interesting to note that the \textit{boundary data space} $\mathcal X_{j, T}^{\sigma, b}$ can be regarded as an analogue, on the semi-infinite strip $\mathbb R_+ \times (0, T)$, of the \textit{Bourgain space} $X_j^{\sigma, b}$. This latter space plays the role of the \textit{solution space} in the analysis of the one-dimensional NLS initial value problem on the line \cite{b1999} (see also \cite{b1993-nls} for the torus) and is defined on the entire spatiotemporal domain $\mathbb R\times\mathbb R$ as the space of all functions $G(x_j, t) \in L^2(\mathbb R^2)$ with finite norm 
\begin{equation}\label{xsb-def}
\no{G}_{X_j^{\sigma, b}} 
:= 
\left(\int_{\mathbb R} \left(1+k_j^2\right)^\sigma \int_{\mathbb R}  \left(1+\left|\tau+k_j^2\right|^2\right)^b \left|\mathcal F\{G\}(k_j, \tau)\right|^2 d\tau dk_j\right)^{\frac 12},
\end{equation}
where $\mathcal F\{G\}$ denotes the spatiotemporal Fourier transform of $G$. Observe that by letting $\tau \mapsto \tau-k_j^2$ we can express this norm in the alternative form
\begin{equation*}
\no{G}_{X_j^{\sigma, b}} 
=
\left(\int_{\mathbb R} \left(1+k_j^2\right)^\sigma \no{e^{ik_j^2 t}  \mathcal F_{x_j}\{G\}(k_j, t)}_{H_t^b(\mathbb R)}^2 dk_j \right)^{\frac 12},
\end{equation*}
which has the same structure with the semi-infinite strip norm \eqref{xt-def}.

The NLS equation in \eqref{qnls-ibvp} is a typical example of a dispersive nonlinear partial differential equation. Such equations describe phenomena in which waves of different frequencies propagate at different speeds; they arise in an abundance of applications and have been studied extensively via a wide variety of theoretical and computational techniques. As for any differential equation, a fundamental question in the study of dispersive nonlinear equations is that of Hadamard well-posedness, namely existence and uniqueness of solution, as well as continuous dependence of this solution on the associated data. Since the 1970s, a remarkable volume of works in the literature has been devoted to the well-posedness (and ill-posedness) of dispersive equations  in the context of their initial value (Cauchy) problem. In that context, celebrated dispersive equations  such as the NLS and Korteweg-de Vries (KdV) equations have been shown to be well-posed via harmonic analysis techniques and the contraction mapping principle. Importantly, new ideas and tools have been developed, such as Strichartz estimates \cite{s1977} and Bourgain spaces~\cite{b1999,b1993-nls,b1993-kdv} (see \eqref{xsb-def} above), leading to optimal well-posedness results in terms of the assumptions on data regularity. A key feature of the initial value problem is that the solution to the forced linear counterpart of the dispersive equation under study can be expressed in terms of the Fourier transform (or Fourier series, in the case of the periodic problem) of the relevant data. This allows one to move the problem from the physical spatial domain ($\mathbb R^n$, or $\mathbb T^n$ in the case of the periodic problem) to the Fourier frequency domain, where a plethora of powerful harmonic analysis techniques can be applied.

In contrast, initial-\textit{boundary} value problems are formulated on subsets of $\mathbb R^n$ that involve a boundary (e.g. the half-line $(0, \infty)$ in one dimension). Hence, the traditional Fourier transform and associated tools are no longer available. This is one of the reasons why the rigorous analysis of dispersive nonlinear equations becomes significantly more challenging in the initial-boundary value problem setting. This fact is clearly reflected in the literature, where the number of works devoted to the well-posedness of such problems is dramatically lower than that of works on the initial value problem. At the same time, initial-boundary value problems are extremely relevant in applications, as they are directly motivated by real-world phenomena (where physical boundaries are omnipresent) and involve a higher degree of complexity than initial value problems due to the additional presence of boundary data (on top of initial data). What is more, the study of initial-boundary value problems is essential in directions that extend well beyond the usual framework of analysis of partial differential equations. For example, in the field of systems theory and control, there is a close connection between the controllability of a system and the regularity of certain associated initial-boundary value problems. In particular, it is well-known that initial-boundary value problems with nonhomogeneous (i.e. nonzero) boundary conditions can be used to model physical evolutions in which the boundary input acts as control. Such boundary control models are particularly important for governing dynamics of physical processes in which access to the interior of a medium is blocked or not feasible, whereas manipulations through the boundary remain an efficient choice. 

The above brief remarks illustrate the importance of obtaining rigorous theoretical results for dispersive equations in the context of initial-boundary value problems, starting naturally from the fundamental question of Hadamard well-posedness. In 2002, Colliander and Kenig \cite{ck2002} as well as Bona, Sun and Zhang \cite{bsz2002} established the well-posedness of the KdV equation on the half-line with data in Sobolev spaces. In the first work, which is actually devoted to the generalized KdV equation, the authors performed their analysis by first expressing the (half-line) initial-boundary value problem as a  combination of cleverly constructed (whole-line) initial value problems and then employing ideas and techniques from the initial value problem toolbox --- notably, from the remarkable series of works on the NLS and KdV initial value problems by Kenig, Ponce and Vega~\cite{kpv1989,kpv1991-osc,kpv1991-kdv,kpv1993,kpv1996}. In the second work (as well as in later works by the same authors e.g. \cite{bsz2006,bsz2008,bsz2018}), the lack of a spatial Fourier transform in the initial-boundary value problem setting is bypassed by employing instead a temporal Laplace transform. In both cases, the authors were able to derive solution formulae for the forced linear KdV on the half-line and hence obtain suitable linear estimates that allowed them to establish well-posedness of the KdV on the half-line via a contraction mapping argument. The subsequent works by Holmer \cite{h2005,h2006} provided, respectively, the counterpart of \cite{ck2002} for the NLS equation on the half-line and the well-posedness of KdV on the half-line in Bourgain spaces. Other notable works during this early stage of well-posedness analysis of initial-boundary value problems are those by Faminskii~\cite{fam2004,fam2007} and Strauss and Bu \cite{sb2001}. 

A third approach (in addition to the two by \cite{ck2002,bsz2002} above) for proving the well-posedness of dispersive nonlinear equations in the context of nonhomogeneous initial-boundary value problems was introduced in~\cite{fhm2016,fhm2017} for the KdV and NLS equations on the half-line. The main idea behind this alternative approach was to use, for the purpose of linear estimates and the Picard iteration, the solution formulae obtained for the respective forced linear problems via a method known as the \textit{unified transform} or the Fokas method. This method, which was introduced by Fokas in 1997 \cite{f1997} and was subsequently developed by many researchers in various directions (see \cite{f2008,fs2012,fp2015,m2023} for references to some of these contributions), is a method of solution of \textit{linear} initial-boundary value problems that provides a direct analogue (for domains with a boundary) of the spatial Fourier transform used in the analysis of the initial value problem. The method relies on a clever use of the symmetries of the dispersion relation of the (linearized) equation  combined with appropriate deformations in the spectral  complex plane. Hence, it yields novel explicit solution representations that involve complex contours of integration. These contours induce exponential decay in the associated integrands, which allows one to efficiently estimate the relevant integrals once the appropriate analysis tools are developed (since the tools used in the study of the initial value problem are often limited to real spectral (Fourier) parameters). 

The method of \cite{fhm2016,fhm2017} was employed for studying the well-posedness of other nonhomogeneous initial-boundary value problems for the NLS, KdV and other nonlinear  equations on the half-line or the finite interval, e.g. see \cite{hm2015,hmy2019-nls,hmy2019-kdv,hm2021,amo2024}. In addition, parallel efforts include the works~\cite{oy2019,ko2022} for the NLS and biharmonic NLS equations on the half-line, as well as the works~\cite{hy2022-jde,hy2022-na} for the KdV and higher-dispersion KdV equation on the half-line in low regularity spaces. 

An important challenge, following the progress on nonlinear initial-boundary value problems outlined in the above paragraphs, was  
the investigation of the well-posedness of such problems for dispersive nonlinear equations in higher than one spatial dimensions. In this direction, the works \cite{rsz2018,a2019,hm2020} established the well-posedness (independently and via different methods) of the  NLS equation on the half-plane $\left\{x_1\in\mathbb R, x_2 > 0\right\}$ (in fact, \cite{a2019} provided linear estimates for the $n$-dimensional half-space $\left\{(x_1, \ldots, x_{n-1})\in\mathbb R^{n-1}, x_n>0\right\}$). 
The work \cite{hm2020}, in particular, advanced for the first time the unified transform approach of \cite{fhm2016,fhm2017} to two spatial dimensions in the case of Dirichlet data (the case of Neumann and Robin data for the  NLS equation on the half-plane was addressed in \cite{hm2022}).

A remarkable feature that emerged spontaneously in the analysis of \cite{hm2020} is that the correct space for the boundary data of the NLS half-plane problem is a certain combination of Bourgain-type spaces analogous to those used as solution spaces in the well-posedness of the  NLS equation on the line \cite{b1999} (see \eqref{xsb-def} above). In other words, the solution space of the  initial value problem in one spatial dimension assumes the role of the boundary data space for the initial-boundary value problem in two spatial dimensions. 
It should be emphasized that identifying the correct space for the boundary data is an \textit{additional} task in the analysis of an initial-boundary value problem in comparison to that of an initial value problem. This is because placing the initial data in a certain (Sobolev) space affects the regularity of the boundary data; nevertheless, it is not a priori clear what this regularity precisely is. In one spatial dimension, the boundary data only depend on the temporal variable and so their regularity is simpler to describe. For example, it is shown in \cite{fhm2017} that in the case of the NLS on the half-line with initial data in the Sobolev space $H^s$ the Dirichlet boundary data must belong to the Sobolev space $H^{\frac{2s+1}{4}}$. In higher than one spatial dimensions, however, the boundary data depend on both space and time and hence they have a more complex nature described by Bourgain spaces, as shown in \cite{hm2020}.

An interesting observation about the NLS half-plane problem is that while one of the two spatial variables is restricted to the half-line the other one remains unrestricted. Hence, the half-plane (or, more generally, half-space) problem can essentially be treated as a direct generalization of the half-line problem, at least as far as the linear solution formula and relevant estimates are concerned. This is not to say that the works \cite{rsz2018,a2019,hm2020} do not involve a higher level of complexity and novelty of results when compared to the half-line problem studied in~\cite{h2005,fhm2017,bsz2018}. To the contrary, as noted above, new phenomena emerge including the use of Bourgain-type spaces for the boundary data. However, from an analysis point of view, it seems fair to say that the half-plane problem is not a fully two-space-dimensional  initial-boundary value problem, since one of the two spatial variables belongs to the whole line and, therefore, the associated terms in the iteration map can be handled entirely via initial value problem techniques. 

Following the above reasoning, the simplest spatial domain that leads to an initial-boundary value problem with a fully two-space-dimensional nature is the quarter-plane $\mathbb R_+^2 := \left\{x_1>0, x_2>0\right\}$ of the NLS initial-boundary value problem \eqref{qnls-ibvp}. The fact that both variables are restricted to the half-line introduces the need for two pieces of boundary data, one at $x_1=0$ and another one at $x_2=0$. The correct function spaces for these data are not the Bourgain spaces of \cite{hm2020} since, unlike the half-plane $\left\{x_1\in\mathbb R, x_2>0\right\}$, none of the spatial variables is now unrestricted and so the relevant pieces of boundary data are no longer globally defined in space (observe from~\eqref{xsb-def} that the Bourgain norm involves globally defined functions). Instead, our analysis shows that the suitable boundary data spaces for the quarter-plane problem~\eqref{qnls-ibvp} are defined via the norm \eqref{xt-def}, which involves a half-line spatial Fourier transform as opposed to the regular whole-line Fourier transform. 

Although the Bourgain-type spaces \eqref{xt-def} may seem like a natural and straightforward modification of the Bourgain spaces~\eqref{xsb-def} used in \cite{hm2020} due to the fact that the quarter-plane boundary data are half-line functions with respect to the spatial variable they depend on, the  implementation of this modification is not as straightforward as one may have expected. In fact, starting with problem \eqref{qnls-ibvp} of the present work, the spaces of the type \eqref{xt-def} have motivated a \textit{new approach} for the treatment of initial-boundary value problems which, to the best of our knowledge, differs from every other approach used so far in the literature in the following crucial way: The linear estimates, which are of vital importance in the contraction mapping argument for the nonlinear problem, are now derived by \textit{estimating directly the entire solution formula} for the forced linear problem. This is in stark contrast to the existing works on the well-posedness of dispersive initial-boundary value problems, including in particular the works~\cite{ck2002,bsz2002,fhm2017} that have inspired the three main approaches used by most works in the literature. Indeed, in all of those works, the forced linear initial-boundary value problem is broken into component problems by means of the superposition principle. Each of these pieces depends solely on either the initial data, or the forcing, or the boundary data and is estimated \textit{individually}. Importantly, those components that only depend on the initial data or the forcing are handled essentially via initial value problem techniques. Although this decomposition is convenient, it comes with certain compatibility requirements between the regularity of the original boundary data and of the boundary traces of the component problems that involve either the initial data or the forcing. In one spatial dimension, checking that these requirements are actually true is possible, although it does add some technicalities to the analysis. In two spatial dimensions, however, the boundary data become spatiotemporal functions and so the compatibility conditions required in order to employ the ``decomposition approach'' are more difficult to verify, especially when considering a fully two-dimensional domain like the quarter-plane (as opposed to the half-plane).

In order to state our main result, we recall the definition of the Bessel potential space $H^{s, p}(\mathbb R^2)$ as the space of all functions $\varphi \in L^p(\mathbb R^2)$ such that
\begin{equation}
\no{\varphi}_{H^{s, p}(\mathbb R^2)}
:=
\no{\mathcal F^{-1}\left\{\left(1+k_1^2+k_2^2\right)^{\frac s2} \mathcal F\{\varphi\}(k_1, k_2)\right\}}_{L^p(\mathbb R^2)} < \infty,
\end{equation}
and the space $H^{s, p}(\mathbb R_+^2)$ as the restriction of $H^{s, p}(\mathbb R^2)$ to the quarter-plane $\mathbb R_+^2$ via the infimum norm
\begin{equation}
\no{\phi}_{H^{s, p}(\mathbb R_+^2)}
=
\inf\left\{\no{\varphi}_{H^{s, p}(\mathbb R^2)}: \varphi|_{\mathbb R_+^2} = \phi \right\}.
\end{equation}
Moreover, we refer to the pair $(q, p)$ as \textit{admissible} if it satisfies the conditions
\begin{equation}\label{adm-cond}
\frac 1q + \frac 1p = \frac 12, \quad 2 \leq p < \infty.
\end{equation}
With the above definitions at hand, the main result of this work can be stated as follows:
\begin{theorem}[Hadamard well-posedness for NLS on the spatial quarter-plane]\label{lwp-t}
Let $0\leq s < \frac 12$, $2 \leq \alpha \leq  \frac{3-s}{1-s}$ and $(q, p)$ be the admissible pair  
\begin{equation}\label{qp-t} 
q = \frac{2\alpha}{(1-s)(\alpha-1)}, \quad 
p=\frac{2\alpha}{1+(\alpha-1)s}.
\end{equation}
Then, the initial-boundary value problem \eqref{qnls-ibvp} for the  NLS equation on the spatial quarter-plane has a unique solution 
$$
u \in C_t([0, T]; H_x^s(\mathbb R_+^2)) \cap L_t^q((0, T); H_x^{s, p}(\mathbb R_+^2))
$$ 
that admits the estimate
\begin{equation}
\begin{aligned}
&\quad
\sup_{t\in [0, T]} \no{u(t)}_{H_x^s(\mathbb R_+^2)}
+
\no{u}_{L_t^q((0, T); H_x^{s, p}(\mathbb R_+^2))}
\\
&\leq
c_{s, \alpha} 
\Big(\no{u_0}_{H^s(\mathbb R_+^2)} + \no{g_0}_{\mathcal X_{1, T}^{0, \frac{2s+1}{4}}} + \no{g_0}_{\mathcal X_{1, T}^{s, \frac 14}} 
+ \no{h_0}_{\mathcal X_{2, T}^{0, \frac{2s+1}{4}}} + \no{h_0}_{\mathcal X_{2, T}^{s, \frac 14}}\Big),
\end{aligned}
\end{equation}
where $c_{s, \alpha}>0$ is a constant that only depends on $s$ and $\alpha$, and the lifespan $T = T(u_0, g_0, h_0, s, \alpha) > 0$ satisfies
\begin{equation}
\left(2 c_{s, \alpha}\right)^\alpha
\Big(\no{u_0}_{H^s(\mathbb R_+^2)} + \no{g_0}_{\mathcal X_{1, T}^{0, \frac{2s+1}{4}}} + \no{g_0}_{\mathcal X_{1, T}^{s, \frac 14}} 
+ \no{h_0}_{\mathcal X_{2, T}^{0, \frac{2s+1}{4}}} + \no{h_0}_{\mathcal X_{2, T}^{s, \frac 14}}\Big)^{\alpha-1} T^{\frac{3-s-\alpha(1-s)}{2}} < 1
\end{equation}
with the additional requirement in the critical case $\alpha = \frac{3-s}{1-s}$ that the initial data norm $\no{u_0}_{H^s(\mathbb R_+^2)}$ is sufficiently small. 
Furthermore, the data-to-solution map $\{u_0, g_0, h_0\} \mapsto u$ is locally Lipschitz continuous.
\end{theorem}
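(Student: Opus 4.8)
The plan is to prove Theorem~\ref{lwp-t} via a contraction mapping argument applied to an iteration map built from the solution formula of the forced linear initial-boundary value problem, which is obtained through the unified transform (Fokas method). The key novelty, as emphasized in the introduction, is that we will \emph{not} decompose the forced linear problem into components depending separately on the initial data, the forcing, and the boundary data; instead, we will estimate the entire linear solution formula directly. Thus the first task is to derive, for the forced linear problem
\begin{equation*}
iu_t + u_{x_1 x_1} + u_{x_2 x_2} = f, \quad u(x_1, x_2, 0) = u_0, \quad u(x_1, 0, t) = g_0, \quad u(0, x_2, t) = h_0,
\end{equation*}
an explicit representation $u = S[u_0, g_0, h_0, f]$ as a superposition of integrals over the real line and over deformed complex contours in the two spectral planes associated with $x_1$ and $x_2$. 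The contours are chosen so that the integrands decay exponentially in the interior of the relevant spectral regions; this is what will make the subsequent estimates tractable even though classical initial-value-problem tools apply only to real Fourier variables.

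Next I would establish the two central families of linear estimates for $S$: a Sobolev-type estimate controlling $\sup_{t \in [0,T]} \no{S[u_0, g_0, h_0, f](t)}_{H_x^s(\mathbb R_+^2)}$ and, separately, a Strichartz-type estimate controlling $\no{S[u_0, g_0, h_0, f]}_{L_t^q((0,T); H_x^{s,p}(\mathbb R_+^2))}$ for the admissible pair \eqref{qp-t}. In both cases the right-hand side should be bounded by $\no{u_0}_{H^s(\mathbb R_+^2)} + \no{g_0}_{\mathcal X_{1,T}^{0, \frac{2s+1}{4}}} + \no{g_0}_{\mathcal X_{1,T}^{s, \frac14}} + \no{h_0}_{\mathcal X_{2,T}^{0, \frac{2s+1}{4}}} + \no{h_0}_{\mathcal X_{2,T}^{s, \frac14}}$ plus a $T$-power times an appropriate norm of $f$. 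The boundary-data contributions are where the spaces $\mathcal X_{j,T}^{\sigma,b}$ of \eqref{xt-def} enter: one inserts the half-line Fourier transform \eqref{hl-ft-def} of $g_0$ (resp.\ $h_0$), recognizes the factor $e^{ik_j^2 t}$ and the $H_t^b$ norm built into \eqref{xt-def}, and deforms contours to trade spatial decay against temporal regularity. A further technical ingredient, flagged in the abstract, is a Sobolev extension result on $\mathbb R_+^2$: since the quarter-plane has a corner at the origin and a non-compact Lipschitz boundary, one must extend the usual extension/restriction theorems (for $H^s$ and $H^{s,p}$) to this setting in order to pass between the intrinsic restriction norms on $\mathbb R_+^2$ and the whole-plane norms on which the Fourier analysis is performed.

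With the linear theory in place, I would set up the iteration map
\begin{equation*}
\Phi(u) := S\big[u_0, g_0, h_0, \mp |u|^{\alpha-1} u\big]
\end{equation*}
on the Banach space $C_t([0,T]; H_x^s(\mathbb R_+^2)) \cap L_t^q((0,T); H_x^{s,p}(\mathbb R_+^2))$ and show that, on a suitable ball of radius comparable to the data norm, $\Phi$ is a contraction. This requires a nonlinear estimate: using the admissibility of $(q,p)$, Hölder in time, Sobolev embedding, and the fractional product/chain rule (with the constraint $2 \le \alpha \le \frac{3-s}{1-s}$ and $0 \le s < \frac12$ ensuring the relevant exponents are in range), one bounds $\no{|u|^{\alpha-1}u}$ in the norm dual to the forcing norm appearing in the linear estimate by $c\, T^{\theta} \no{u}^{\alpha}$ with $\theta = \frac{3-s-\alpha(1-s)}{2} > 0$ (and $\theta = 0$ in the critical case, whence the smallness requirement on $\no{u_0}_{H^s}$). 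Choosing $T$ small enough so that the displayed lifespan inequality holds makes $\Phi$ map the ball into itself and contract; the fixed point is the desired solution. Uniqueness follows from the same contraction estimate on the difference of two solutions, and local Lipschitz continuity of the data-to-solution map follows by applying the linear estimates and the nonlinear difference estimate to $S[u_0 - \tilde u_0,\, g_0 - \tilde g_0,\, h_0 - \tilde h_0,\, \mp(|u|^{\alpha-1}u - |\tilde u|^{\alpha-1}\tilde u)]$ and absorbing the nonlinear term for $T$ small.

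I expect the main obstacle to be the direct estimation of the boundary-data portion of the linear solution formula in the Strichartz norm $L_t^q H_x^{s,p}(\mathbb R_+^2)$. Unlike the Sobolev ($L^2$-based) estimate, the Strichartz estimate requires controlling a mixed space-time Lebesgue norm of a contour integral whose integrand couples the two spectral variables through the boundary terms; one must simultaneously exploit the exponential decay along the deformed contours, the oscillation $e^{ik_j^2 t}$, and an $L^{p'} \to L^p$ type bound, all while the spatial domain is only a quarter-plane (so reflection/extension arguments must respect the corner). Establishing that the natural norm emerging from this analysis is precisely the intersection $\mathcal X_{j,T}^{0,\frac{2s+1}{4}} \cap \mathcal X_{j,T}^{s,\frac14}$ — and not something larger — is the crux, and it is what dictates the function spaces for $g_0$ and $h_0$ in the statement of the theorem.
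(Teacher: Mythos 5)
Your proposal follows essentially the same route as the paper: the unified transform solution formula for the forced linear quarter-plane problem is estimated \emph{directly} (without decomposition into initial-value subproblems) in both the Sobolev norm and the Strichartz norm, the boundary-data spaces $\mathcal X_{j,T}^{0,\frac{2s+1}{4}}\cap\mathcal X_{j,T}^{s,\frac14}$ emerge from those estimates, the quarter-plane zero-extension result handles the corner, and the theorem then follows by a contraction on a ball using fractional chain-rule/Leibniz nonlinear estimates, yielding exactly the stated lifespan condition and the critical-case smallness requirement. The one point you gloss over is that the contraction argument by itself gives uniqueness only within the chosen ball; the paper extends uniqueness to the whole solution space by the standard argument of Cazenave--Weissler.
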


Theorem \ref{lwp-t} will be proved through a contraction mapping argument by combining certain nonlinear estimates (see Proposition \ref{qnls-non-p}) with the following crucial linear estimate, which holds for any $0\leq s < \frac 12$ and any admissible pair $(q, p)$ in the sense of \eqref{adm-cond}:
\begin{equation}\label{fls-le}
\begin{aligned}
&\quad
\sup_{t\in [0, T]} \no{S\big[u_0, g_0, h_0; f\big](t)}_{H_x^s(\mathbb R_+^2)}
+
\no{S\big[u_0, g_0, h_0; f\big]}_{L_t^q((0, T); H_x^{s, p}(\mathbb R_+^2))}
\\
&\leq
c_{s, p} \Big(\no{u_0}_{H^s(\mathbb R_+^2)} + \no{g_0}_{\mathcal X_{1, T}^{0, \frac{2s+1}{4}}} + \no{g_0}_{\mathcal X_{1, T}^{s, \frac 14}} 
+ \no{h_0}_{\mathcal X_{2, T}^{0, \frac{2s+1}{4}}} + \no{h_0}_{\mathcal X_{2, T}^{s, \frac 14}}
+ \no{f}_{L_t^1((0, T); H_x^s(\mathbb R_+^2))}
\Big),
\end{aligned}
\end{equation}
where $S\big[u_0, g_0, h_0; f\big]$ denotes the unified transform solution \eqref{qnls-utm-sol} to the forced linear Schr\"odinger equation on the spatial quarter-plane, i.e. to the problem
\begin{equation}\label{qnls-fls-ibvp}
\begin{aligned}
&iu_t + u_{x_1x_1} + u_{x_2x_2} = f(x_1, x_2, t), \quad (x_1, x_2) \in \mathbb R_+^2, \ 0<t<T, 
\\
&u(x_1, x_2, 0) = u_0(x_1, x_2),
\\
&u(x_1, 0, t) = g_0(x_1, t),
\quad
u(0, x_2, t) = h_0(x_2, t).
\end{aligned}
\end{equation}
The proof of the linear estimate \eqref{fls-le} follows by combining the Sobolev and Strichartz estimates of Theorems~\ref{sob-est-t} and \ref{strich-est-t}. In particular, thanks to the quarter-plane Strichartz estimates of Theorem \ref{strich-est-t}, the linear estimate~\eqref{fls-le} and, in turn, the nonlinear well-posedness Theorem~\ref{lwp-t} are  applicable to rough spaces with spatial Sobolev exponent as low as $s=0$. In fact, we anticipate that this result is optimal at least as far as contraction mapping methods are concerned. To this end, we mention that, in a series of recent papers, Killip, Vi\c{s}an and collaborators have been able to push the well-posedness Sobolev exponents of the initial value problem of certain \textit{completely integrable} equations (such as the cubic NLS equation in one spatial dimension~\cite{zs1971}) below what was previously regarded as the optimal threshold (e.g. see \cite{kv2019,hknv2022,hkv2020} and also \cite{bp2022} by Bahouri and Perelman). Their method relies crucially on the infinite number of conservation laws admitted by these integrable equations. The question of whether their techniques can be generalized to non-integrable systems like the NLS equation in two spatial dimensions considered in this work is an interesting one.

As already noted above Theorem \ref{lwp-t}, instead of following the decomposition approach used so far in the literature, the present work introduces a different approach in which the entire linear solution formula is estimated directly, i.e. \textit{without escaping from the initial-boundary value problem framework}. This new way of deriving the linear estimate \eqref{qnls-fls-ibvp} highlights even further the ability of the unified transform (which is used here in order to derive the solution formula \eqref{qnls-utm-sol} for the forced linear Schr\"odinger equation on the spatial quarter-plane) to produce effective and versatile solution representations which can be used for proving the well-posedness of nonlinear dispersive initial-boundary value problems. 

Finally, it is worth noting yet another important difference between the quarter-plane $\mathbb R_+^2$ considered in this work and the half-plane $\mathbb R \times \mathbb R_+$ (as well as the half-line and the finite interval in one dimension). Unlike these previously studied domains, the boundary of the quarter-plane is no longer smooth but only Lipschitz continuous, as it involves a corner at the origin. This lack of smoothness prevents one from directly employing certain classical results related to extensions in Sobolev spaces, since in their standard form found in the literature these results require either a smooth or a compact boundary. Hence, in order to study the initial-boundary value problem~\eqref{qnls-ibvp}, it is necessary to expand the validity of these Sobolev extension results to a domain with a non-smooth (Lipschitz) and non-compact boundary like the quarter-plane. The relevant proofs, which are given in Appendix \ref{app-s}, combine existing ideas from the theory of Sobolev spaces along with suitable adaptations in order to handle the specific nature of the quarter-plane boundary. In this connection, we note that the present work is, to the best of our knowledge, the first one on the well-posedness of a nonhomogeneous nonlinear initial-boundary value problem formulated on a domain with a non-smooth (Lipschitz) and non-compact boundary.

The unified transform and the direct estimation technique used in this work could also be applied to other dispersive or, more generally, evolution-type partial differential equations. One example is the Zakharov-Kuznetsov equation on a two-dimensional half-plane or quarter-plane. In general, our approach would be especially helpful in situations where it is difficult to establish time estimates for the nonhomogeneous Cauchy problems that arise when using standard decomposition methods. While the core idea behind our approach still applies, additional technical challenges may appear due to different dispersion relations or nonlinear terms.

Another promising direction is to extend our results to more general polygonal domains with Lipschitz and non-compact boundaries beyond the quarter-plane, provided an explicit Ehrenpreis-type representation formula is available for the associated linear problem. Although working with corners and non-smooth boundaries introduces new difficulties, especially when it comes to Sobolev extensions and boundary trace regularity, we believe that the ideas developed in Appendix \ref{app-s} of the present paper provide a solid starting point for handling such cases.
\\[2mm]
\textbf{Organization of the paper.}
In Section \ref{le-s}, we establish Sobolev as well as Strichartz-type estimates for the forced linear quarter-plane problem \eqref{qnls-fls-ibvp} --- Theorems \ref{sob-est-t} and \ref{strich-est-t}, respectively --- which yield the linear estimate~\eqref{fls-le}. Importantly, we estimate the linear problem as a whole directly through the relevant unified transform solution formula, i.e. without decomposing it by means of the initial value problem. Having established the necessary linear estimates, in Section \ref{lwp-s} we combine them with appropriate nonlinear estimates and a contraction mapping argument in order to prove Theorem \ref{lwp-t} for the Hadamard well-posedness of the NLS quarter-plane problem~\eqref{qnls-ibvp}. 
The unified transform solution formula used for proving the linear estimate \eqref{fls-le} is established in Appendix \ref{utm-s}. Finally, in Appendix \ref{app-s}, we provide the proof of certain extension results in Sobolev spaces which remain valid in the case of the quarter-plane despite the fact that the boundary of this domain is non-smooth and non-compact.

\section{Linear estimates on the spatial quarter-plane}
\label{le-s}

In this section, we establish the key linear estimate \eqref{fls-le} for the forced linear counterpart of the NLS quarter-plane problem \eqref{qnls-ibvp}, namely problem \eqref{qnls-fls-ibvp}. This is achieved by directly estimating the relevant solution formula
\begin{equation}\label{qnls-utm-sol}
\begin{aligned}
u(x_1, x_2, t)
&=
\frac{1}{(2\pi)^2}
\int_{\mathbb R}\int_{\mathbb R}
e^{ik_1x_1+ik_2x_2-i\omega t}
\Big[
\what u_0(k_1, k_2)
-i\int_0^t 
e^{i\omega t'}\what f(k_1, k_2, t')dt'
\Big]
dk_2dk_1
\\
&\quad
-\frac{1}{(2\pi)^2}
\int_{\mathbb R}\int_{\mathbb R}
e^{ik_1x_1+ik_2x_2-i\omega t}
\Big[
\what u_0(-k_1,k_2)
-
i \int_0^t e^{i\omega t'}\what f(-k_1, k_2, t')dt' 
\Big]  dk_2dk_1
\\
&\quad
-\frac{1}{(2\pi)^2}
\int_{\mathbb R}\int_{\mathbb R}
e^{ik_1x_1+ik_2x_2-i\omega t}
\Big[
\what u_0(k_1,-k_2) 
- i \int_0^t e^{i\omega t'}\what f(k_1, -k_2, t')dt'
\Big] dk_2 dk_1
\\
&\quad
+\frac{1}{(2\pi)^2}\int_{\mathbb R}\int_{\mathbb R}
e^{ik_1x_1+ik_2x_2-i\omega t} 
\Big[
\what u_0(-k_1,-k_2) -i\int_0^t 
e^{ i\omega  t'}
\what f(-k_1, -k_2, t')dt' 
\Big] dk_2dk_1
\\
&\quad
+\frac{1}{(2\pi)^2}
\int_{\mathbb R}\int_{\p D_1}
e^{ik_1x_1+ik_2x_2-i\omega t}
\cdot 
2k_1 \big[\, \widetilde  h_0(k_2, \omega, T) - \widetilde  h_0(-k_2, \omega, T)\big] 
dk_1dk_2
\\
&\quad
+\frac{1}{(2\pi)^2}
\int_{\mathbb R}\int_{\p D_2}
e^{ik_1x_1+ik_2x_2-i\omega t}
\cdot 2k_2 \big[\widetilde  g_0(k_1, \omega, T) - \widetilde  g_0(-k_1, \omega, T)\big]  dk_2 dk_1,
\end{aligned}
\end{equation}
which is derived via the unified transform of Fokas in Appendix \ref{utm-s}. In the above formula, $\what u_0$ and $\what f$ denote the \textit{quarter-plane} Fourier transforms of the initial data and the forcing, respectively, that is
\begin{equation}
\begin{aligned}
\what u_0(k_1, k_2) &= \int_{\mathbb R_+} \int_{\mathbb R_+} e^{-ik_1x_1-ik_2x_2} u_0(x_1, x_2) dx_2 dx_1,
\\
\what f(k_1, k_2, t) &= \int_{\mathbb R_+} \int_{\mathbb R_+} e^{-ik_1x_1-ik_2x_2} f(x_1, x_2, t) dx_2 dx_1,
\end{aligned}
\end{equation}
while $\widetilde g_0$ and $\widetilde h_0$ are spatiotemporal transforms of the Dirichlet boundary data given by
\begin{equation}\label{tilde-def-0}
\widetilde g_0(k_1, \omega, T)
=
\int_0^T e^{i\omega t} \what g_0^{x_1}(k_1, t) dt,
\quad
\widetilde h_0(k_2, \omega, T)
=
\int_0^T e^{i\omega t} \what h_0^{x_2}(k_2, t) dt
\end{equation}
where the dispersion relation is given by
\begin{equation}\label{omega-def}
\omega = \omega(k_1, k_2) = k_1^2+k_2^2
\end{equation}
and $\what g_0^{x_1}$ and $\what h_0^{x_2}$ denote the half-line Fourier transforms \eqref{hl-ft-def} of $g_0$ and $h_0$ with respect to $x_1$ and $x_2$ respectively. Moreover, the complex contours of integration $\p D_1$ and $\p D_2$ are the positively oriented boundaries of the first quadrant of the $k_1$ and the $k_2$ complex planes, respectively, as shown in Figure \ref{qnls-dplus}.

\begin{figure}[ht]
\centering
\vspace{2cm}
\begin{tikzpicture}[scale=1.1]
\pgflowlevelsynccm
\draw[line width=.5pt, black, dashed](0,0)--(-0.8,0);
\draw[line width=.5pt, black, dashed](0,0)--(0,-0.8);
\draw[line width=.5pt, black](1.45,1.8)--(1.45,1.45);
\draw[line width=.5pt, black](1.45,1.45)--(1.83,1.45);
\node[] at (1.59, 1.65) {\fontsize{8}{8} $k_j$};
\node[] at (-0.25, -0.25) {\fontsize{10}{10} $0$};
\draw[middlearrow={Stealth[scale=1.3, reversed]}] (0,0) -- (90:1.8);
\draw[middlearrow={Stealth[scale=1.3]}] (0,0) -- (0:1.8);
\node[] at (0.9, 0.95) {\fontsize{10}{10}\it $D_j$};
\end{tikzpicture}
\vspace{7mm}
\caption{The region $D_j$ and its positively oriented boundary $\p D_j$, $j=1,2$.}
\label{qnls-dplus}
\end{figure}
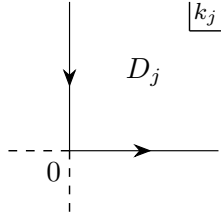

We begin with the estimation of the linear solution formula \eqref{qnls-utm-sol} for each $t\in [0, T]$ as a function of $x = (x_1, x_2)$ in the quarter-plane Sobolev space $H_x^s(\mathbb R_+^2)$.
\begin{theorem}[Sobolev estimate]\label{sob-est-t}
Suppose $0\leq s < \frac 12$. Then, the solution $u = S\big[u_0, g_0, h_0; f\big]$ to the quarter-plane initial-boundary value problem~\eqref{qnls-fls-ibvp} of the forced linear Schr\"odinger equation, as given by formula \eqref{qnls-utm-sol}, admits the Sobolev estimate
\begin{equation}\label{fls-hs}
\begin{aligned}
\sup_{t\in [0, T]} \no{S\big[u_0, g_0, h_0; f\big](t)}_{H_x^s(\mathbb R_+^2)}
&\leq
c_s \Big(\no{u_0}_{H^s(\mathbb R_+^2)} + \no{g_0}_{\mathcal X_{1, T}^{0, \frac{2s+1}{4}}} + \no{g_0}_{\mathcal X_{1, T}^{s, \frac 14}} 
\\
&\hskip 1cm
+ \no{h_0}_{\mathcal X_{2, T}^{0, \frac{2s+1}{4}}} + \no{h_0}_{\mathcal X_{2, T}^{s, \frac 14}}
+ \no{f}_{L_t^1((0, T); H_x^s(\mathbb R_+^2))}
\Big).
\end{aligned}
\end{equation}
\end{theorem}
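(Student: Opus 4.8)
The plan is to split the solution formula \eqref{qnls-utm-sol} into its six natural groups of terms, estimate each group in $H_x^s(\mathbb R_+^2)$ uniformly in $t\in[0,T]$, read off the six terms on the right-hand side of \eqref{fls-hs} one at a time, and then combine by the triangle inequality.

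First I would dispatch the four \emph{volume terms}, i.e. those carrying $\what u_0(\pm k_1,\pm k_2)$ together with the Duhamel integrals $\int_0^t e^{i\omega t'}\what f(\pm k_1,\pm k_2,t')\,dt'$. Denote by $E_0$ extension by zero from $\mathbb R_+^2$ to $\mathbb R^2$, by $R_1,R_2$ the reflections $(R_1\varphi)(x_1,x_2)=\varphi(-x_1,x_2)$ and $(R_2\varphi)(x_1,x_2)=\varphi(x_1,-x_2)$, by $e^{it\Delta}$ the free Schr\"odinger group on $\mathbb R^2$ associated with $iu_t+\Delta u=0$, and by $\Phi:=e^{it\Delta}E_0u_0-i\int_0^t e^{i(t-t')\Delta}E_0f(t')\,dt'$ the corresponding Duhamel expression. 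The changes of variables $k_1\mapsto-k_1$ and/or $k_2\mapsto-k_2$ inside those four integrals show that their sum equals $(I-R_1)(I-R_2)\Phi$ as a function on all of $\mathbb R^2$. Since the $H_x^s(\mathbb R_+^2)$-norm of a restriction is bounded by the $H^s(\mathbb R^2)$-norm of any extension, since $R_1,R_2$ are isometries and $e^{it\Delta}$ is unitary on $H^s(\mathbb R^2)$, and since the Duhamel piece is handled by Minkowski's integral inequality, this contribution is controlled by $c\big(\no{E_0u_0}_{H^s(\mathbb R^2)}+\int_0^T\no{E_0f(t')}_{H^s(\mathbb R^2)}\,dt'\big)$. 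Invoking the boundedness of $E_0\colon H^s(\mathbb R_+^2)\to H^s(\mathbb R^2)$ for $0\le s<\tfrac12$ — one of the quarter-plane Sobolev extension statements established in Appendix \ref{app-s} — this is $\le c_s\big(\no{u_0}_{H^s(\mathbb R_+^2)}+\no{f}_{L_t^1((0,T);H_x^s(\mathbb R_+^2))}\big)$, which is where the hypothesis $s<\tfrac12$ first enters.

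It remains to estimate the two \emph{boundary terms}, and by the symmetry $x_1\leftrightarrow x_2$ it suffices to treat the $\widetilde h_0$ term $u_5$. Freezing $k_2\in\mathbb R$ and inserting $\omega=k_1^2+k_2^2$ into \eqref{tilde-def-0}, I would write $\widetilde h_0(\pm k_2,\omega,T)=\int_0^T e^{ik_1^2 t'}\varphi_{\pm k_2}(t')\,dt'$ with $\varphi_{k_2}(t):=e^{ik_2^2 t}\what h_0^{x_2}(k_2,t)$; note that $\varphi_{k_2}$ is exactly the function whose weighted $H_t^b(0,T)$-norm, integrated over $k_2$, produces $\no{h_0}_{\mathcal X_{2,T}^{\sigma,b}}$ in \eqref{xt-def}. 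Thus, up to the modulus-one factor $e^{-ik_2^2 t}$ and the outer Fourier integral $\int_{\mathbb R}e^{ik_2x_2}(\cdot)\,dk_2$ in $x_2$, $u_5$ reduces to the one-dimensional half-line operator $V_{k_2}(x_1,t):=\int_{\p D_1}e^{ik_1x_1-ik_1^2 t}\,2k_1\,\Theta_{k_2}(k_1^2)\,dk_1$ acting on the datum $\varphi_{k_2}-\varphi_{-k_2}$, where $\Theta_{k_2}(\mu):=\int_0^T e^{i\mu t'}(\varphi_{k_2}-\varphi_{-k_2})(t')\,dt'$. I would then exploit the norm equivalence $\no{\psi}_{H^s(\mathbb R^2)}^2\simeq\no{\psi}_{L_{x_2}^2H_{x_1}^s}^2+\no{\psi}_{H_{x_2}^sL_{x_1}^2}^2$, which follows from $(1+k_1^2+k_2^2)^s\simeq(1+k_1^2)^s+(1+k_2^2)^s$ and Plancherel, applied to an extension $u_5^{\mathrm{ext}}$ of $u_5$ obtained by extending $V_{k_2}(\cdot,t)$ from $\mathbb R_+$ to $\mathbb R$ in $x_1$ (legitimate at the regularity $s<\tfrac12$).

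The heart of the argument is the one-dimensional estimate $\sup_{t\in[0,T]}\no{V_{k_2}(t)}_{H_{x_1}^\sigma(\mathbb R_+)}\le c_\sigma\no{\varphi_{k_2}-\varphi_{-k_2}}_{H_t^{(2\sigma+1)/4}(0,T)}$ for $0\le\sigma<\tfrac12$ (together with its $\mathbb R$-extension), which I would prove by splitting $\p D_1$ into the ray along the positive real axis and the ray along the positive imaginary axis. On the real ray, $V_{k_2}$ is an ordinary half-line Fourier integral, so Plancherel in $x_1$ followed by the substitution $\mu=k_1^2$ turns $\int_0^\infty(1+k_1^2)^\sigma|2k_1\Theta_{k_2}(k_1^2)|^2\,dk_1$ into, up to constants, $\int_0^\infty(1+\mu)^{(2\sigma+1)/2}|\Theta_{k_2}(\mu)|^2\,d\mu$, which is $\simeq\no{\varphi_{k_2}-\varphi_{-k_2}}_{H_t^{(2\sigma+1)/4}(0,T)}^2$ once the datum is extended by zero outside $(0,T)$ — admissible precisely because $\tfrac{2\sigma+1}{4}<\tfrac12$. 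On the imaginary ray $k_1=i\eta$ one has $e^{ik_1x_1}=e^{-\eta x_1}$, so $V_{k_2}(\cdot,t)$ becomes a Laplace-type transform $\int_0^\infty e^{-\eta x_1}G_{k_2,t}(\eta)\,d\eta$; estimating its $H_{x_1}^\sigma(\mathbb R_+)$-norm by Hardy/Hilbert-type inequalities reduces, after the same substitution, to the identical weighted $L_\eta^2$ quantity. With $V_{k_2}$ under control, integrating back over $k_2$ and using the two-piece splitting of the $H^s(\mathbb R^2)$-norm finishes the estimate: the $L_{x_2}^2H_{x_1}^s$ piece uses the $\sigma=s$ version of the one-dimensional estimate (temporal index $\tfrac{2s+1}{4}$) with no $k_2$-weight and yields $\no{h_0}_{\mathcal X_{2,T}^{0,(2s+1)/4}}$, while the $H_{x_2}^sL_{x_1}^2$ piece uses the $\sigma=0$ version (temporal index $\tfrac14$) against the weight $(1+k_2^2)^{s/2}$ and yields $\no{h_0}_{\mathcal X_{2,T}^{s,1/4}}$; in both cases the triangle inequality splits $\varphi_{k_2}-\varphi_{-k_2}$ and the substitution $k_2\mapsto-k_2$ (under which $k_2^2$ and $1+k_2^2$ are unchanged) identifies each piece with the corresponding $\mathcal X$-norm. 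The $\widetilde g_0$ term $u_6$ is handled identically with $x_1$ and $x_2$ interchanged, and adding the six contributions gives \eqref{fls-hs}. The step I expect to be hardest is the imaginary-ray analysis: the complex spectral parameter places it outside the scope of Plancherel, so the weighted $H_{x_1}^\sigma(\mathbb R_+)$-bound for the Laplace-type transform must be obtained by hand, with attention both to the corner of $\p D_1$ at the origin and to the low temporal index $b<\tfrac12$ — it is exactly this low regularity that makes the zero-extension in time of the boundary datum permissible.
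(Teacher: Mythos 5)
Your proposal is correct and, for most of the argument, coincides with the paper's proof: the same decomposition of \eqref{qnls-utm-sol} into initial-data, forcing and boundary terms; the same zero-extension treatment of the volume terms (your reflection identity $(I-R_1)(I-R_2)\Phi$ is just a compact way of writing what the paper does term by term via $\what u_0(\pm k_1,\pm k_2)=\mathcal F\{U_0\}(\pm k_1,\pm k_2)$); the same splitting of $\p D_j$ into real and imaginary rays; and the same real-ray computation (Plancherel, the substitution $\mu=k_j^2$ producing the weight $\mu^{1/2}$, the splitting of $(1+k_1^2+k_2^2)^s$, and the Lions--Magenes zero-extension in time, which is exactly where $b=\tfrac{2s+1}{4}<\tfrac12$ is used). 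The genuine divergence is in the \emph{fractional} part of the imaginary-ray contribution. The paper keeps the problem two-dimensional: it bounds the $L^2(\mathbb R_+^2)$ norm via Hardy's $L^2(\mathbb R_+)$ Laplace-transform bound (this is your $\sigma=0$ case) and then attacks the Sobolev--Slobodecki seminorm directly, reducing it by Plancherel in one variable and the Laplace bound in the other to the kernel estimate $\mathcal J(k_1,k_2,s)=\iint|e^{ik_1z_1-k_2z_2}-1|^2(z_1^2+z_2^2)^{-1-s}dz_1dz_2\lesssim(k_1^2+k_2^2)^s$, proved in polar coordinates. You instead propose an anisotropic route: split $\no{\cdot}_{H^s(\mathbb R^2)}$ into $L^2_{x_2}H^s_{x_1}+H^s_{x_2}L^2_{x_1}$, extend by zero only in the restricted variable (legitimate for $s<\tfrac12$ in the $H^s_{x_1}$ slot and an isometry in the $L^2_{x_1}$ slot), and invoke a one-dimensional bound $\no{\int_0^\infty e^{-\eta x}F(\eta)\,d\eta}_{H^\sigma(\mathbb R_+)}\lesssim\no{(1+\eta^2)^{\sigma/2}F}_{L^2(\mathbb R_+)}$ for the Laplace-type transform. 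This is the one step you leave as a sketch (``Hardy/Hilbert-type inequalities''), and it does need to be supplied: it follows from Hardy's $L^2$ bound at $\sigma=0$, the identity $\p_x\int_0^\infty e^{-\eta x}F\,d\eta=-\int_0^\infty e^{-\eta x}\eta F\,d\eta$ at $\sigma=1$, and Stein--Weiss/complex interpolation in between; once it is in place your bookkeeping over $k_2$ (unweighted with temporal index $\tfrac{2s+1}{4}$, weighted by $(1+k_2^2)^{s/2}$ with temporal index $\tfrac14$) reproduces exactly the two $\mathcal X_{2,T}$ norms in \eqref{fls-hs}. The trade-off: your route is more modular and recycles the half-line machinery of \cite{fhm2017} slice by slice in $k_2$, at the cost of needing the 1D fractional Laplace bound and the 1D zero-extension bound in $x_1$; the paper's route avoids both by paying with the two-dimensional seminorm computation and the polar-coordinate kernel estimate. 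Both are valid for $0\le s<\tfrac12$.
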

\begin{proof}
Omitting dependencies for brevity, we write the solution formula \eqref{qnls-utm-sol} as the combination
\begin{equation}\label{qnls-utm-sol-dec}
\begin{aligned}
u &= 
\left(I_{+1, +2} - I_{-1, +2}  - I_{+1, -2}  + I_{-1, -2}\right)
\\
&\quad
- i \left(J_{+1, +2} - J_{-1, +2} - J_{+1, -2} + J_{-1, -2}\right)
\\
&\quad
+ \left(K_{+1} - K_{-1} + K_{+2} - K_{-2}\right),
\end{aligned}
\end{equation}
where the various integrals associated with the initial data, the forcing, and the boundary data are given by
\begin{align}
I_{\pm 1, \pm 2}(x_1, x_2, t) 
&=
\frac{1}{(2\pi)^2}
\int_{\mathbb R}\int_{\mathbb R}
e^{ik_1x_1+ik_2x_2-i\omega t}
\, \what u_0(\pm k_1, \pm k_2)
dk_2dk_1,
\label{i12-def}
\\
J_{\pm 1, \pm 2}(x_1, x_2, t) 
&=
\frac{1}{(2\pi)^2}
\int_{\mathbb R}\int_{\mathbb R}
e^{ik_1x_1+ik_2x_2-i\omega t}
\int_0^t e^{i\omega t'}\what f(\pm k_1, \pm k_2, t')dt'
dk_2dk_1,
\label{j12-def}
\\
K_{\pm 1}(x_1, x_2, t) 
&=
\frac{1}{(2\pi)^2}
\int_{\mathbb R}\int_{\p D_2}
e^{ik_1x_1+ik_2x_2-i\omega t}
\cdot 2k_2 \widetilde  g_0(\pm k_1, \omega, T)  dk_2 dk_1,
\label{k1-def}
\\
K_{\pm 2}(x_1, x_2, t) 
&=
\frac{1}{(2\pi)^2}
\int_{\mathbb R}\int_{\p D_1}
e^{ik_1x_1+ik_2x_2-i\omega t}
\cdot 
2k_1 \widetilde  h_0(\pm k_2, \omega, T) dk_1dk_2.
\label{k2-def}
\end{align}
We estimate each of the above four types of integrals separately. We note, in particular, that the terms \eqref{k1-def} and~\eqref{k2-def} (which involve the boundary data) require novel techniques developed specifically for the initial-boundary value problem framework.
\vskip 3mm
\noindent
\textit{Estimation of $I_{\pm 1, \pm 2}$.} 
The key observation for estimating these four terms is that they actually make sense for all $(x_1, x_2) \in \mathbb R^2$ and not just for $(x_1, x_2) \in \mathbb R_+^2$. Consider the extension by zero of $u_0 \in H^s(\mathbb R_+^2)$ defined by
\begin{equation}\label{0-ext}
U_0(x_1, x_2) = \left\{\begin{array}{ll} u_0(x_1, x_2), &(x_1, x_2) \in \mathbb R_+^2,
\\
0, &(x_1, x_2) \in \mathbb R^2 \setminus \mathbb R_+^2.
\end{array}
\right.
\end{equation}
By Proposition \ref{es-p}, if $0\leq s < \frac 12$ then $U_0 \in H^s(\mathbb R^2)$ with 
\begin{equation}\label{ext-0}
\no{U_0}_{H^s(\mathbb R^2)} \leq c_s \no{u_0}_{H^s(\mathbb R_+^2)}. 
\end{equation}
Thus, observing that $\what u_0(k_1, k_2) = \mathcal F\{U_0\}(k_1, k_2)$, we have
\begin{equation}\label{0-ext-2}
I_{\pm 1, \pm 2}(x_1, x_2, t) 
=
\frac{1}{(2\pi)^2}
\int_{\mathbb R}\int_{\mathbb R}
e^{ik_1x_1+ik_2x_2-i\omega t}
\mathcal F\{U_0\}(\pm k_1, \pm k_2)
dk_2dk_1
\end{equation}
which implies that $\mathcal F_x\{I_{\pm 1, \pm 2}\}(k_1, k_2, t) = e^{-i\omega t} \mathcal F\{U_0\}(\pm k_1, \pm k_2)$. In turn, by the definition of the Sobolev norm, 
\begin{align}
\no{I_{\pm 1, \pm 2}(t)}_{H_x^s(\mathbb R_+^2)}
&\leq
\no{I_{\pm 1, \pm 2}(t)}_{H_x^s(\mathbb R^2)}
=
\no{\left(1+k_1^2+k_2^2\right)^{\frac s2} e^{-i\omega t} \mathcal F\{U_0\}(\pm k_1, \pm k_2)}_{L_k^2(\mathbb R^2)}
\nn\\
&=
\no{\left(1+k_1^2+k_2^2\right)^{\frac s2} \mathcal F\{U_0\}(k_1, k_2)}_{L_k^2(\mathbb R^2)}
=
\no{U_0}_{H^s(\mathbb R^2)},
\quad 
s\in\mathbb R,
\nn 
\end{align}
with the penultimate equality due to the fact that $\omega$ is real for $k_1, k_2 \in \mathbb R$. Hence, in view of \eqref{ext-0}, 
\begin{equation}\label{ipm-est}
\sup_{t\in [0, T]} \no{I_{\pm 1, \pm 2}(t)}_{H_x^s(\mathbb R_+^2)}
\leq 
c_s \no{u_0}_{H^s(\mathbb R_+^2)}, \quad 0\leq s < \frac 12.
\end{equation}
\vskip 3mm
\noindent
\textit{Estimation of $J_{\pm 1, \pm 2}$.} 
We proceed similarly to the handling of $I_{\pm 1, \pm 2}$ and exploit the fact that $J_{\pm 1, \pm 2}$ make sense for all $x_1, x_2 \in \mathbb R$ and not just for $x_1, x_2 \in \mathbb R_+$. 
For each $t\in [0, T]$ and $f(t) \in  H_x^s(\mathbb R_+^2)$, we consider 
\begin{equation}\label{f-ext}
F(x_1, x_2, t) = \left\{\begin{array}{ll} f(x_1, x_2, t), &(x_1, x_2) \in \mathbb R_+^2,
\\
0, &(x_1, x_2) \in \mathbb R^2 \setminus \mathbb R_+^2.
\end{array}
\right.
\end{equation}
For each $t\in (0, T)$, by Proposition \ref{es-p}, if $0\leq s < \frac 12$ then $F(t) \in H_x^s(\mathbb R^2)$ with
\begin{equation}\label{ext-f}
\no{F(t)}_{H_x^s(\mathbb R^2)} \leq c_s \no{f(t)}_{H_x^s(\mathbb R_+^2)}, \quad t\in (0, T).
\end{equation}
Therefore, noticing that 
\begin{equation}\label{f-ext-2}
\mathcal F_x\{J_{\pm 1, \pm 2}\}(k_1, k_2, t) = e^{-i\omega t}
\int_0^t e^{i\omega t'}\mathcal F_x\{F\}(\pm k_1, \pm k_2, t') dt',
\end{equation}
we have
\begin{align}
\no{J_{\pm 1, \pm 2}(t)}_{H_x^s(\mathbb R_+^2)}
&\leq
\no{J_{\pm 1, \pm 2}(t)}_{H_x^s(\mathbb R^2)}
\nn\\
&=
\no{\left(1+k_1^2+k_2^2\right)^{\frac s2} e^{-i\omega t}
\int_0^t e^{i\omega t'}\mathcal F_x\{F\}(\pm k_1, \pm k_2, t') dt'}_{L_k^2(\mathbb R^2)}
\nn
\end{align}
which, by Minkowski's integral inequality and the fact that $e^{i\omega t}$ has unit modulus, yields
\begin{align}
\no{J_{\pm 1, \pm 2}(t)}_{H_x^s(\mathbb R_+^2)}
&\leq
\left(\int_{\mathbb R}\int_{\mathbb R} \left(1+k_1^2+k_2^2\right)^s
\left(\int_0^t \left|\mathcal F_x\{F\}(\pm k_1, \pm k_2, t')\right| dt'\right)^2 dk_2 dk_1\right)^{\frac 12}
\nn\\
&\leq
\int_0^t
\left(\int_{\mathbb R}\int_{\mathbb R} \left(1+k_1^2+k_2^2\right)^s
 \left|\mathcal F_x\{F\}(\pm k_1, \pm k_2, t')\right|^2 dk_2 dk_1 \right)^{\frac 12} dt'
\nn\\
&=
\int_0^t \no{F(t')}_{H_x^s(\mathbb R^2)} dt' 
\leq
\no{F}_{L_t^1((0, T); H_x^s(\mathbb R^2))},
\quad 
s\in\mathbb R.
\nn
\end{align}
Therefore, in view of \eqref{ext-f},  
\begin{equation}\label{jpm-est}
\sup_{t\in [0, T]} \no{J_{\pm 1, \pm 2}(t)}_{H_x^s(\mathbb R_+^2)}
\leq 
c_s \no{f}_{L_t^1((0, T); H_x^s(\mathbb R_+^2))}, \quad 0\leq s < \frac 12.
\end{equation}
\vskip 3mm
\noindent
\textit{Estimation of $K_{\pm 1}$ and $K_{\pm 2}$.} 
These terms are genuinely different than the previous ones because they involve the boundary conditions and hence encapsulate the essence of the nature of our initial-boundary value problem. Let us begin with $K_{\pm 1}$. Parametrizing along the complex contour of integration $\p D_2$ (see Figure \ref{qnls-dplus}), we write
\begin{equation}\label{k1-comp}
K_{\pm 1}(x_1, x_2, t)
=
K_{\pm 1, r}(x_1, x_2, t) + K_{\pm 1, i}(x_1, x_2, t)
\end{equation}
where the components $K_{\pm 1, r}$ and $K_{\pm 1, i}$ originate from the portions of $\p D_2$ that coincide with the positive real axis and the positive imaginary axis, respectively, so that
\begin{align}
K_{\pm 1, r}(x_1, x_2, t) 
&=
\frac{1}{(2\pi)^2}
\int_{\mathbb R}\int_{\mathbb R_+}
e^{ik_1x_1+ik_2x_2-i (k_1^2+k_2^2) t}
\cdot 2k_2 \widetilde  g_0(\pm k_1, k_1^2+k_2^2, T)  dk_2 dk_1,
\label{kr-def}
\\
K_{\pm 1, i}(x_1, x_2, t) 
&=
\frac{1}{(2\pi)^2}
\int_{\mathbb R}\int_{\mathbb R_+}
e^{ik_1x_1 - k_2x_2-i(k_1^2-k_2^2) t}
\cdot 2 k_2 \widetilde  g_0(\pm k_1, k_1^2-k_2^2, T)  dk_2 dk_1.
\label{ki-def}
\end{align}

The term $K_{\pm 1, r}$ makes sense for all $x_1, x_2 \in \mathbb R$. Thus, using the Fourier transform characterization of the $H^s(\mathbb R^2)$ norm  and the fact that $e^{-i (k_1^2+k_2^2) t}$ is unimodular, we find
\begin{align}\label{kr-temp1}
\no{K_{\pm 1, r}(t)}_{H_x^s(\mathbb R_+^2)}
\leq
\no{K_{\pm 1, r}(t)}_{H_x^s(\mathbb R^2)}
&=
\no{\left(1+k_1^2+k_2^2\right)^{\frac s2}
2k_2 \widetilde  g_0(\pm k_1, k_1^2+k_2^2, T)
}_{L_{k_1, k_2}^2(\mathbb R \times \mathbb R_+)}
\nn\\
&=
\no{\left(1+k_1^2+k_2^2\right)^{\frac s2}
2k_2 \widetilde  g_0(k_1, k_1^2+k_2^2, T)
}_{L_{k_1, k_2}^2(\mathbb R \times \mathbb R_+)}.
\end{align}
For each $x_1\in \mathbb R_+$, let 
\begin{equation}
G_0(x_1, t) = \left\{\begin{array}{ll} g_0(x_1, t), &t \in (0, T),
\\
0, &t\in \mathbb R \setminus [0, T],
\end{array}
\right.
\end{equation}
and take the half-line Fourier transform \eqref{hl-ft-def} with respect to $x_1$ to obtain
\begin{equation}\label{G0-hft}
\what G_0^{x_1}(k_1, t) = \left\{\begin{array}{ll} \what g_0^{x_1}(k_1, t), &t \in (0, T),
\\
0, &t\in \mathbb R \setminus [0, T],
\end{array}
\right.
\quad k_1 \in \mathbb R.
\end{equation}
Thus, in view of the definition \eqref{tilde-def-0}, 
\begin{equation}\label{G0-tilde}
\begin{aligned}
\widetilde g_0(k_1, k_1^2+k_2^2, T)
&:=
\int_0^T e^{i (k_1^2+k_2^2) t} \, \what g_0^{x_1}(k_1, t) dt
=
\int_0^T e^{i (k_1^2+k_2^2) t} \what G_0^{x_1}(k_1, t) dt
\\
&=
\int_{\mathbb R} e^{i (k_1^2+k_2^2) t} \what G_0^{x_1}(k_1, t) dt
=
\mathcal F_t\{\what G_0^{x_1}(k_1, t)\}(-k_1^2-k_2^2)
\end{aligned}
\end{equation}
and returning to \eqref{kr-temp1} we have
\begin{equation}\label{kr-temp2} 
\no{K_{\pm 1, r}(t)}_{H_x^s(\mathbb R_+^2)}
\leq
\left(\int_{\mathbb R}\int_{\mathbb R_+} \left(1+k_1^2+k_2^2\right)^s
4k_2^2 
\left|
\mathcal F_t\{\what G_0^{x_1}(k_1, t)\}(-k_1^2-k_2^2) \right|^2 dk_2 dk_1 \right)^{\frac 12}.
\end{equation}
For $s\geq 0$, $\left(1+k_1^2+k_2^2\right)^s \simeq \left(1+k_1^2\right)^s + \left(k_2^2\right)^s$. Hence, 
\begin{align} 
\no{K_{\pm 1, r}(t)}_{H_x^s(\mathbb R_+^2)}
&\lesssim
\left(\int_{\mathbb R}\int_{\mathbb R_+} \left(1+k_1^2\right)^s
4k_2^2 
\left|
\mathcal F_t\{\what G_0^{x_1}(k_1, t)\}(-k_1^2-k_2^2) \right|^2 dk_2 dk_1 \right)^{\frac 12}
\nn\\
&\quad
+
\left(\int_{\mathbb R}\int_{\mathbb R_+} \left(k_2^2\right)^s
4k_2^2 
\left|
\mathcal F_t\{\what G_0^{x_1}(k_1, t)\}(-k_1^2-k_2^2) \right|^2 dk_2 dk_1 \right)^{\frac 12}
\end{align}
and making the change of variable $k_2 = \sqrt{-\tau}$, which implies $\tau = -k_2^2$, we obtain
\begin{align} 
\no{K_{\pm 1, r}(t)}_{H_x^s(\mathbb R_+^2)}
&\lesssim
\left(\int_{\mathbb R}\int_{\mathbb R_-} \left(1+k_1^2\right)^s
\left|\tau\right|^{\frac 12} 
\left|
\mathcal F_t\{\what G_0^{x_1}(k_1, t)\}(\tau-k_1^2) \right|^2 d\tau dk_1 \right)^{\frac 12}
\nn\\
&\quad
+
\left(\int_{\mathbb R}\int_{\mathbb R_-} \left|\tau\right|^{s+\frac 12} 
\left|
\mathcal F_t\{\what G_0^{x_1}(k_1, t)\}(\tau-k_1^2) \right|^2 d\tau dk_1 \right)^{\frac 12}.
\nn
\end{align}
The fact that $\mathcal F_t\{\varphi\}(\tau-k_1^2) = \mathcal F_t\{e^{ik_1^2t} \varphi(t)\}(\tau)$ allows us to express the above estimate in the form
\begin{align} 
\no{K_{\pm 1, r}(t)}_{H_x^s(\mathbb R_+^2)}
&\lesssim
\left(\int_{\mathbb R} \left(1+k_1^2\right)^s \int_{\mathbb R_-} 
\left|\tau\right|^{\frac 12} 
\left|
\mathcal F_t\big\{e^{ik_1^2 t} \what G_0^{x_1}(k_1, t)\big\}(\tau) \right|^2 d\tau dk_1 \right)^{\frac 12}
\nn\\
&\quad
+
\left(\int_{\mathbb R}\int_{\mathbb R_-} \left|\tau\right|^{s+\frac 12} 
\left|
\mathcal F_t\big\{e^{ik_1^2 t} \what G_0^{x_1}(k_1, t)\big\}(\tau) \right|^2 d\tau dk_1 \right)^{\frac 12}
\nn\\
&\leq
\left(\int_{\mathbb R} \left(1+k_1^2\right)^s \no{e^{ik_1^2 t} \what G_0^{x_1}(k_1, t)}_{H_t^\frac 14(\mathbb R)}^2 d\tau dk_1 \right)^{\frac 12}
\nn\\
&\quad
+
\left(\int_{\mathbb R} \no{e^{ik_1^2 t} \what G_0^{x_1}(k_1, t)}_{H_t^\frac{2s+1}{4}(\mathbb R)}^2 d\tau dk_1 \right)^{\frac 12},
\quad s \geq - \frac 12, 
\label{k1r-temp5}
\end{align}
where the restriction on $s$ is placed in order to employ the inequality $|\tau| \leq \left(1+\tau^2\right)^{\frac 12}$. 
Finally, further restricting $-\frac 12 \leq s < \frac 12$ (so that $0 \leq \frac{2s+1}{4} < \frac 12$) allows us to combine \eqref{k1r-temp5} with \eqref{G0-hft}  and Theorem 11.4 of \cite{lm1972} in order to conclude 
\begin{equation}\label{k1r-hs}
\sup_{t\in [0, T]} \no{K_{\pm 1, r}(t)}_{H_x^s(\mathbb R_+^2)}
\leq
c_s
\Big(\no{g_0}_{\mathcal X_{1, T}^{s, \frac 14}} + \no{g_0}_{\mathcal X_{1, T}^{0, \frac{2s+1}{4}}}\Big), 
\quad - \frac 12 \leq s < \frac 12,
\end{equation}
where the Bourgain-type space $\mathcal X_{j, T}^{s, b}$ is defined by \eqref{xt-def}.

Next, we proceed to the estimation of $K_{\pm 1, i}$. Notice that the corresponding expression \eqref{ki-def} does \textit{not} make sense for $x_2<0$. Hence, the Fourier transform method used for $K_{\pm 1, r}$ cannot be employed for $K_{\pm 1, i}$ and a different approach is required. The idea is to use the physical space characterization of the $H^s(\mathbb R_+^2)$ norm, which for $0\leq s < \frac 12$ (the upper bound on $s$ has already been placed in order to estimate $K_{\pm 1, r}$) is made up of the $L^2$ norm and the fractional Sobolev-Slobodecki seminorm. 
First, recalling \eqref{G0-tilde} and combining the fact that \eqref{ki-def} makes sense for all $x_1 \in \mathbb R$ with Plancherel's theorem and the fact that $e^{-ik_1^2t}$ has unit modulus, we obtain
\begin{align}
\no{K_{\pm 1, i}(t)}_{L_x^2(\mathbb R_+^2)}
&\leq
\no{K_{\pm 1, i}(t)}_{L_{x_1, x_2}^2(\mathbb R \times \mathbb R_+)}
\nn\\
&=
\no{
\frac{1}{\sqrt{2\pi}}
\no{
\frac{1}{2\pi}
\int_{\mathbb R_+}
e^{- k_2x_2-i(k_1^2-k_2^2) t}  
\cdot 2 k_2 \mathcal F_t\{\what G_0^{x_1}(\pm k_1, t)\}(-k_1^2+k_2^2)  dk_2
}_{L_{k_1}^2(\mathbb R)}
}_{L_{x_2}^2(\mathbb R_+)}
\nn\\
&=
\frac{1}{2\pi \sqrt{2\pi}}
\no{
\no{
\int_{\mathbb R_+}
e^{- k_2x_2+ik_2^2 t}  
\cdot 2 k_2 \mathcal F_t\{\what G_0^{x_1}(\pm k_1, t)\}(-k_1^2+k_2^2)  dk_2
}_{L_{x_2}^2(\mathbb R_+)}
}_{L_{k_1}^2(\mathbb R)}.
\nn
\end{align}
Then, using the boundedness of the Laplace transform in $L^2(\mathbb R_+)$ established in \cite{h1929} (see also Lemma 3.2 in~\cite{fhm2017}) for the $x_2$ norm of the $k_2$ integral, we obtain
\begin{align}
\no{K_{\pm 1, i}(t)}_{L_x^2(\mathbb R_+^2)}
&\leq
\frac{1}{2\pi \sqrt{2\pi}}
\no{
\sqrt \pi  
\no{
e^{ik_2^2 t} 
\cdot 2 k_2 \mathcal F_t\{\what G_0^{x_1}(\pm k_1, t)\}(-k_1^2+k_2^2)
}_{L_{k_2}^2(\mathbb R_+)}
}_{L_{k_1}^2(\mathbb R)}
\nn\\
&=
\frac{1}{2\pi}
\left(
\int_{\mathbb R} \int_{\mathbb R_+}
2k_2^2 \left|\mathcal F_t\{\what G_0^{x_1}(k_1, t)\}(-k_1^2+k_2^2)\right|^2 dk_2 dk_1
\right)^{\frac 12}.
\label{k1i-temp1}
\end{align}
The integral on the right-hand side is analogous to the one in \eqref{kr-temp2} hence, proceeding in a similar fashion by making the change of variable $k_2 = \sqrt \tau$, we obtain
\begin{align*}
\no{K_{\pm 1, i}(t)}_{L_x^2(\mathbb R_+^2)}
&\leq
\frac{1}{2\pi}
\left(
\int_{\mathbb R} \int_{\mathbb R_+}
\tau^{\frac 12} \left|\mathcal F_t\{\what G_0^{x_1}(k_1, t)\}(\tau-k_1^2)\right|^2 d\tau dk_1
\right)^{\frac 12}
\nn\\
&\leq
\frac{1}{2\pi}
\left(
\int_{\mathbb R} \int_{\mathbb R}
\left(1+\tau^2\right)^{\frac 14} 
\left|\mathcal F_t\big\{e^{ik_1^2t} \what G_0^{x_1}(k_1, t)\big\}(\tau)\right|^2 d\tau dk_1
\right)^{\frac 12}
\nn\\
&=
\frac{1}{2\pi}
\left(
\int_{\mathbb R} \no{e^{ik_1^2t} \what G_0^{x_1}(k_1, t)}_{H_t^{\frac 14}(\mathbb R)}^2 dk_1
\right)^{\frac 12}.
\end{align*}
Thus, in view of \eqref{G0-hft}, Theorem 11.4 of \cite{lm1972} and the definition \eqref{xt-def},  
\begin{equation}\label{k1i-l2}
\sup_{t\in [0, T]} \no{K_{\pm 1, i}(t)}_{L_x^2(\mathbb R_+^2)}
\lesssim
\left(
\int_{\mathbb R} \no{e^{ik_1^2t} \what g_0^{x_1}(k_1, t)}_{H_t^{\frac 14}(0, T)}^2 dk_1
\right)^{\frac 12}
=:
\no{g_0}_{\mathcal X_{1, T}^{0, \frac 14}}.
\end{equation}
This completes the estimation of the $L_x^2(\mathbb R_+^2)$ norm of $K_{\pm 1, i}$. 

Next, for each $t\in [0, T]$, we consider the fractional Sobolev-Slobodecki seminorm
\begin{equation}
\no{K_{\pm 1, i}(t)}_s
:=
\left(
\int_{\mathbb R_+} \int_{\mathbb R_+}\int_{\mathbb R_+} \int_{\mathbb R_+}
\frac{\left|K_{\pm 1, i}(x_1, x_2, t) - K_{\pm 1, i}(y_1, y_2, t)\right|^2}{\left[(x_1-y_1)^2 + (x_2-y_2)^2\right]^{1+s}} \, dy_1dy_2 dx_1 dx_2
\right)^{\frac 12},
\quad
0< s < 1.
\nn
\end{equation}
In fact, by symmetry, 
\begin{align}
\no{K_{\pm 1, i}(t)}_s^2
&=
2 \int_{\mathbb R_+} \int_{\mathbb R_+}\int_{\mathbb R_+} \int_{\mathbb R_+}
\frac{\left|K_{\pm 1, i}(x_1+z_1, x_2+z_2, t) - K_{\pm 1, i}(x_1, x_2, t)\right|^2}{\left(z_1^2 + z_2^2\right)^{1+s}} \, dz_1dz_2 dx_1 dx_2
\label{frac1}
\\
&\quad
+
2 \int_{\mathbb R_+} \int_{\mathbb R_+}\int_{\mathbb R_+} \int_{\mathbb R_+}
\frac{\left|K_{\pm 1, i}(x_1+z_1, x_2, t) - K_{\pm 1, i}(x_1, x_2+z_2, t)\right|^2}{\left(z_1^2 + z_2^2\right)^{1+s}} \, dz_1dz_2 dx_1 dx_2.
\label{frac2}
\end{align}
Concerning the first of the above terms, we have
\begin{align}
\eqref{frac1}
&\leq
2 \int_{\mathbb R_+} \int_{\mathbb R_+} \left(z_1^2 + z_2^2\right)^{-(1+s)}  
\int_{\mathbb R_+} \int_{\mathbb R_+}
\bigg|
\frac{1}{(2\pi)^2}
\int_{\mathbb R}\int_{\mathbb R_+}
e^{ik_1x_1 - k_2x_2-i(k_1^2-k_2^2) t}
\left(e^{ik_1z_1 - k_2z_2} - 1\right)
\nn\\
&\hskip 8cm
\cdot 2 k_2 \mathcal F_t\{\what G_0^{x_1}(\pm k_1, t)\}(-k_1^2+k_2^2)   dk_2 dk_1
\bigg|^2 dx_1 dx_2 dz_1dz_2
\nn
\end{align}
so, by Plancherel's theorem between the $x_1$ and $k_1$ integrals,  
\begin{align}
\eqref{frac1}
&\leq
\frac{1}{\pi^3} \int_{\mathbb R_+} \int_{\mathbb R_+} \left(z_1^2 + z_2^2\right)^{-(1+s)}  
\int_{\mathbb R_+} \int_{\mathbb R}
\bigg|
\int_{\mathbb R_+}
e^{- k_2x_2-i(k_1^2-k_2^2) t}
\left(e^{ik_1z_1 - k_2z_2} - 1\right)
\nn\\
&\hskip 6.7cm
\cdot k_2 \mathcal F_t\{\what G_0^{x_1}(\pm k_1, t)\}(-k_1^2+k_2^2)   dk_2
\bigg|^2 dk_1 dx_2 dz_1dz_2.
\nn
\end{align}
Shifting our attention to the pair of $x_2$ and $k_2$ integrals, we employ the boundedness of the Laplace transform in $L^2(\mathbb R_+)$ to further obtain
\begin{equation}
\eqref{frac1}
\leq
\frac{1}{\pi^2} 
\int_{\mathbb R}
\int_{\mathbb R_+}
k_2^2 \left|\mathcal F_t\{\what G_0^{x_1}(k_1, t)\}(-k_1^2+k_2^2) \right|^2
\mathcal J(k_1, k_2, s) dk_2 dk_1
\label{k1i-temp2}
\end{equation}
where 
$$
\mathcal J(k_1, k_2, s) := \int_{\mathbb R_+} \int_{\mathbb R_+}   
\frac{\left|e^{ik_1z_1 - k_2z_2} - 1\right|^2}{\left(z_1^2 + z_2^2\right)^{1+s}}  dz_1dz_2.
$$

As $\mathcal J(k_1, k_2, s) = \mathcal J(-k_1, k_2, s)$, it suffices to estimate $\mathcal J$ for $k_1\geq 0$. Suppose first that $k_2=\lambda k_1$, $\lambda \in [0, 1]$. Then, letting $(\zeta_1, \zeta_2) = (k_1 z_1, k_1 z_2)$, we have
$$
\mathcal J(k_1, \lambda k_1, s)
=
\left(k_1^2\right)^s
\int_{\mathbb R_+}\int_{\mathbb R_+}
\frac{\left|e^{i\zeta_1 - \lambda \zeta_2} - 1 \right|^2}
{\left(\zeta_1^2+\zeta_2^2\right)^{1+s}}\,
d\zeta_2d\zeta_1
=
\left(k_1^2\right)^s
\int_{0}^{\frac \pi 2} \int_{\mathbb R_+}
\frac{\left|e^{\left(i\cos \theta - \lambda \sin\theta\right)r}-1\right|^2}
{r^{1+2s}}\,
drd\theta
$$
after switching to polar coordinates. The singularity at $r=0$ is integrable since for $r\ll 1$ we have
$e^{\left(i\cos \theta - \lambda \sin\theta\right)r} - 1 = \left(i\cos \theta - \lambda \sin\theta\right)r + O(r^2)$ and $s<1$. 
Furthermore, for $r\gg 1$ integrability follows by observing that $\left|e^{\left(i\cos \theta - \lambda \sin\theta\right)r}-1\right| \leq 2$ (the boundedness of the exponential is important here) and $r^{-(1+2s)}$ is integrable at infinity since  $s>0$ (in order for the fractional seminorm to be present). Thus, for $k_2=\lambda k_1$ with $\lambda \in [0, 1]$ we have that $\mathcal J(k_1, \lambda k_1, s) \lesssim \left(k_1^2\right)^s$.
In the remaining case, namely $k_1=\lambda k_2$ with $\lambda \in [0, 1]$, we have
$$
\mathcal J(\lambda k_2, k_2, s)
=
\left(k_2^2\right)^{s} \int_{\mathbb R_+}\int_{\mathbb R_+} 
\frac{\left|e^{i\lambda \zeta_1 - \zeta_2}-1\right|^2}
{\left(\zeta_1^2+\zeta_2^2\right)^{1+s}}\,
d\zeta_2d\zeta_1.
$$
so similarly to the first case we find $\mathcal J(\lambda k_2, k_2, s) \lesssim \left(k_2^2\right)^s$.
Therefore, overall we have the estimate
$$
\mathcal J(k_1, k_2, s) \lesssim \left(k_1^2+k_2^2\right)^s, \quad k_1\in\mathbb R, \ k_2\in\mathbb R_+, \ 0<s<1,
$$
which can be combined with \eqref{k1i-temp2} to deduce
\begin{equation}
\eqref{frac1}
\lesssim
\int_{\mathbb R} \int_{\mathbb R_+}
k_2^2 \left|\mathcal F_t\{\what G_0^{x_1}(k_1, t)\}(-k_1^2+k_2^2) \right|^2 
\left(k_1^2+k_2^2\right)^s
dk_2 dk_1.
\nn
\end{equation}
The right-hand side of this inequality is similar to the one of \eqref{k1i-temp1}. Hence, proceeding as before, we obtain the following estimate for the term \eqref{frac1}:
\begin{align}
\eqref{frac1}
&\lesssim
\int_{\mathbb R} \int_{\mathbb R_+}
\tau^{\frac 12} \left|\mathcal F_t\{\what G_0^{x_1}(k_1, t)\}(\tau-k_1^2) \right|^2 
\left(\tau + k_1^2\right)^s
d\tau dk_1
\nn\\
&\leq
\int_{\mathbb R} 
\left(1 + k_1^2\right)^s
\int_{\mathbb R_+}
\left(1 + \tau^2\right)^{\frac 14} \left|\mathcal F_t\big\{e^{ik_1^2 t} \what G_0^{x_1}(k_1, t)\big\}(\tau) \right|^2 
d\tau dk_1
\nn\\
&\quad
+
\int_{\mathbb R} 
\int_{\mathbb R_+}
\left(1 + \tau^2\right)^{\frac{2s+1}{4}} \left|\mathcal F_t\big\{e^{ik_1^2 t} \what G_0^{x_1}(k_1, t)\big\}(\tau) \right|^2 
d\tau dk_1
\nn\\
&\leq
\int_{\mathbb R} 
\left(1 + k_1^2\right)^s
\no{e^{ik_1^2 t} \what G_0^{x_1}(k_1, t)}_{H_t^{\frac 14}(\mathbb R)}^2 dk_1
+
\int_{\mathbb R} 
\no{e^{ik_1^2 t} \what G_0^{x_1}(k_1, t)}_{H_t^{\frac{2s+1}{4}}(\mathbb R)}^2 dk_1.
\label{k1i-temp3}
\end{align}

The term \eqref{frac2} can be estimated in a similar way to \eqref{frac1}. Indeed, by Plancherel's theorem and the Laplace transform boundedness in $L^2(\mathbb R_+)$,
\begin{equation}
\eqref{frac2}
\lesssim
\int_{\mathbb R}
\int_{\mathbb R_+}
k_2^2 \left|\mathcal F_t\{\what G_0^{x_1}(k_1, t)\}(-k_1^2+k_2^2) \right|^2
\left(
\int_{\mathbb R_+} \int_{\mathbb R_+}   
\frac{\left|e^{ik_1z_1} - e^{- k_2z_2}\right|^2}{\left(z_1^2 + z_2^2\right)^{1+s}}  dz_1dz_2
\right) dk_2 dk_1.
\nn
\end{equation}
The integral with respect to $z_1$ and $z_2$ is equal to the one in \eqref{k1i-temp2} since $\left|e^{ik_1z_1} - e^{- k_2z_2}\right| = \left|e^{ik_1z_1- k_2z_2}-1\right|$.
Hence, we once again arrive at estimate \eqref{k1i-temp3}, namely
\begin{equation}\label{k1i-temp4}
\eqref{frac2}
\lesssim
\int_{\mathbb R} 
\left(1 + k_1^2\right)^s
\no{e^{ik_1^2 t} \what G_0^{x_1}(k_1, t)}_{H_t^{\frac 14}(\mathbb R)}^2 dk_1
+
\int_{\mathbb R} 
\no{e^{ik_1^2 t} \what G_0^{x_1}(k_1, t)}_{H_t^{\frac{2s+1}{4}}(\mathbb R)}^2 dk_1.
\end{equation}

Combining \eqref{k1i-temp3} with \eqref{k1i-temp4}, the definition \eqref{G0-hft} of $G_0$, Theorem 11.4 of \cite{lm1972} and the definition \eqref{xt-def} of $\mathcal X_{1, T}^{\sigma, b}$, we deduce 
\begin{equation}\label{k1i-frac}
\sup_{t\in [0, T]}  \no{K_{\pm 1, i}(t)}_s
\lesssim
\no{g_0}_{\mathcal X_{1, T}^{s, \frac 14}}
+
\no{g_0}_{\mathcal X_{1, T}^{0, \frac{2s+1}{4}}},
\quad
0<s<\frac 12.
\end{equation}
The $L^2$ estimate \eqref{k1i-l2} and the fractional estimate \eqref{k1i-frac} yield
\begin{equation}\label{k1i-hs}
\sup_{t\in [0, T]} \no{K_{\pm 1, i}(t)}_{H_x^s(\mathbb R_+^2)}
\lesssim
\no{g_0}_{\mathcal X_{1, T}^{s, \frac 14}}
+
\no{g_0}_{\mathcal X_{1, T}^{0, \frac{2s+1}{4}}},
\quad
0\leq s < \frac 12,
\end{equation}
which together with estimate \eqref{k1r-hs} for $K_{\pm 1, r}$ implies the final Sobolev estimate for $K_{\pm 1}$, namely
\begin{equation}\label{k1-hs}
\sup_{t\in [0, T]} \no{K_{\pm 1}(t)}_{H_x^s(\mathbb R_+^2)}
\lesssim
\no{g_0}_{\mathcal X_{1, T}^{s, \frac 14}}
+
\no{g_0}_{\mathcal X_{1, T}^{0, \frac{2s+1}{4}}},
\quad
0\leq s < \frac 12.
\end{equation}

The estimation of $K_{\pm 2}$ is entirely analogous with the one of $K_{\pm 1}$, as each integral can be obtained from the other by swapping the indices ``1'' and ``2'' and also $g_0$ with $h_0$. Therefore, the proof of Theorem \ref{sob-est-t} is complete. \end{proof}

In the low-regularity setting $0\leq s < \frac 12$ considered in this work, the space $H_x^s(\mathbb R_+^2)$ is not an algebra. Hence, in order to handle the norm that arises in estimate \eqref{fls-hs} when the forcing $f$ is replaced by the NLS nonlinearity $\pm |u|^{\alpha-1} u$, we additionally need to establish the following Strichartz-type estimates. 
\begin{theorem}[Strichartz estimates]\label{strich-est-t}
Let $0\leq s < \frac 12$ and suppose that the pair $(q, p)$ is admissible in the sense of \eqref{adm-cond}. Then, the formula \eqref{qnls-utm-sol} for the solution $u = S\big[u_0, g_0, h_0; f\big]$ to the quarter-plane initial-boundary value problem~\eqref{qnls-fls-ibvp} of the forced linear Schr\"odinger equation admits the estimate
\begin{equation}\label{fls-strich}
\begin{aligned}
\no{S\big[u_0, g_0, h_0; f\big]}_{L_t^q((0, T); H_x^{s, p}(\mathbb R_+^2))}
&\leq
c_{s, p} \Big(\no{u_0}_{H^s(\mathbb R_+^2)} + \no{g_0}_{\mathcal X_{1, T}^{0, \frac{2s+1}{4}}} + \no{g_0}_{\mathcal X_{1, T}^{s, \frac 14}} 
\\
&\hskip 1.2cm
+ \no{h_0}_{\mathcal X_{2, T}^{0, \frac{2s+1}{4}}} + \no{h_0}_{\mathcal X_{2, T}^{s, \frac 14}}
+\no{f}_{L_t^1((0, T); H_x^s(\mathbb R_+^2))}
\Big).
\end{aligned}
\end{equation}
\end{theorem}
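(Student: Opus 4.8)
The plan is to mirror the structure of the Sobolev estimate (Theorem \ref{sob-est-t}), decomposing the unified transform solution \eqref{qnls-utm-sol} as in \eqref{qnls-utm-sol-dec} into the initial-data terms $I_{\pm 1, \pm 2}$, the forcing terms $J_{\pm 1, \pm 2}$, and the boundary terms $K_{\pm 1}$, $K_{\pm 2}$, and to estimate each group separately in $L_t^q((0,T); H_x^{s,p}(\mathbb R_+^2))$. For the $I_{\pm 1, \pm 2}$ and $J_{\pm 1, \pm 2}$ terms, the key point is again that they make sense on all of $\mathbb R^2$, with $\mathcal F_x\{I_{\pm 1,\pm 2}\}(k_1,k_2,t)=e^{-i\omega t}\mathcal F\{U_0\}(\pm k_1,\pm k_2)$ and a Duhamel-type representation for $J_{\pm 1,\pm 2}$. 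Hence these are literally (frequency-reflected) free Schr\"odinger evolutions and Duhamel integrals on $\mathbb R^2$, so the classical two-dimensional Strichartz estimates for $e^{it\Delta}$ apply: one controls $\no{I_{\pm 1,\pm 2}}_{L_t^q H_x^{s,p}(\mathbb R^2)}\lesssim \no{U_0}_{H^s(\mathbb R^2)}\lesssim \no{u_0}_{H^s(\mathbb R_+^2)}$ via the extension estimate \eqref{ext-0}, and similarly $\no{J_{\pm 1,\pm 2}}_{L_t^q H_x^{s,p}(\mathbb R^2)}\lesssim \no{F}_{L_t^1 H_x^s(\mathbb R^2)}\lesssim \no{f}_{L_t^1 H_x^s(\mathbb R_+^2)}$ using \eqref{ext-f}, Minkowski's inequality in $t'$, and the homogeneous Strichartz bound applied to each slice; restriction of the $\mathbb R^2$-norm to $\mathbb R_+^2$ costs nothing.

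The heart of the matter is the boundary terms. As in the proof of Theorem \ref{sob-est-t}, I would split $K_{\pm 1}=K_{\pm 1,r}+K_{\pm 1,i}$ along the real and imaginary parts of $\p D_2$. For $K_{\pm 1,r}$, given by \eqref{kr-def}, the integrand is again an honest function on $(x_1,x_2)\in\mathbb R^2$ whose spatial Fourier transform is supported in $k_2\in\mathbb R_+$ and equals $2k_2 e^{-i(k_1^2+k_2^2)t}\widetilde g_0(\pm k_1,k_1^2+k_2^2,T)$; after the change of variables $k_2=\sqrt{-\tau}$ (as in \eqref{G0-tilde}--\eqref{k1r-temp5}) this is recognized as a free Schr\"odinger evolution on $\mathbb R^2$ of a function whose $H^s(\mathbb R^2)$ norm is controlled by $\no{g_0}_{\mathcal X_{1,T}^{s,1/4}}+\no{g_0}_{\mathcal X_{1,T}^{0,(2s+1)/4}}$, so the $\mathbb R^2$ Strichartz estimate again finishes it. The genuinely new term is $K_{\pm 1,i}$ from \eqref{ki-def}, which decays like $e^{-k_2 x_2}$ in $x_2$ and therefore is \emph{not} an $e^{it\Delta}$ evolution on $\mathbb R^2$; there is no way to extend it to $x_2<0$. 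For this term the plan is to prove the Strichartz-type bound directly on the half-space $\mathbb R_+^2$: use the $x_1$-variable as a full free Schr\"odinger variable (Fourier in $k_1$, dispersion $e^{-ik_1^2 t}$), so that one-dimensional Strichartz in $(x_1,t)$ is available for the $k_1$-integral, and treat the $k_2$-integral as a superposition of the boundary-layer profiles $e^{-k_2 x_2+ik_2^2 t}$; the $x_2$-integration is handled by the $L^2(\mathbb R_+)$ boundedness of the Laplace transform (as in \eqref{k1i-temp1}), and the extra derivatives in $x_2$ are absorbed into the $k_2$-powers exactly as in the Sobolev estimate. One obtains $\no{K_{\pm 1,i}}_{L_t^q H_x^{s,p}(\mathbb R_+^2)}\lesssim \no{g_0}_{\mathcal X_{1,T}^{s,1/4}}+\no{g_0}_{\mathcal X_{1,T}^{0,(2s+1)/4}}$, with the $H^{s,p}$ spatial derivatives split as $(1+k_1^2)^{s/2}$ against the $x_1$-Strichartz norm plus a $(k_2^2)^{s/2}$ piece absorbed into $k_2$-moments of the Laplace transform, precisely paralleling the handling of \eqref{kr-temp2} and \eqref{k1i-temp2}. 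The term $K_{\pm 2}$ is obtained from $K_{\pm 1}$ by interchanging the roles of the indices $1$ and $2$ and of $g_0$ and $h_0$.

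The admissibility condition \eqref{adm-cond} is exactly what is needed so that all the Strichartz estimates invoked --- the genuinely two-dimensional one on $\mathbb R^2$ for the $I,J,K_{r}$ pieces, and the one-dimensional one in $(x_1,t)$ (or $(x_2,t)$) for the boundary-layer pieces $K_{\pm 1,i}$ (resp.\ $K_{\pm 2,i}$) --- hold with the stated pair; note the exponent pair \eqref{qp-t} used in Theorem \ref{lwp-t} is admissible in this sense, so no separate verification is required here. Assembling the bounds for the four groups through the decomposition \eqref{qnls-utm-sol-dec} and the triangle inequality yields \eqref{fls-strich}.

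I expect the main obstacle to be the treatment of $K_{\pm 1,i}$: unlike every other term it is not reducible to a free evolution on $\mathbb R^2$, so one cannot simply quote off-the-shelf Strichartz estimates and must instead run a ``half Strichartz, half Laplace'' argument that couples the dispersive gain in $(x_1,t)$ with the boundary-layer structure in $x_2$. The delicate point is to verify that the non-dispersive $x_2$-direction does not cost any regularity --- concretely, that the $L^2(\mathbb R_+)$-Laplace-transform bound of \cite{h1929} can be inserted \emph{before} taking the $L^q_t$ norm and commuted past the $x_1$-Strichartz estimate via Minkowski's inequality, and that the additional $(k_2^2)^{s/2}$ weight coming from the $H^{s,p}$ spatial derivative is integrable against $k_2\, dk_2$ after the substitution $k_2=\sqrt\tau$, which is exactly where the restriction $0\le s<\tfrac12$ enters (so that $\tau^{s+1/2}\lesssim (1+\tau^2)^{(2s+1)/4}$ with $(2s+1)/4<1/2$, matching the available boundary-data regularity). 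Once that is in place, the rest is bookkeeping.
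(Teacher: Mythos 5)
Your decomposition and your treatment of $I_{\pm1,\pm2}$, $J_{\pm1,\pm2}$ and $K_{\pm1,r}$ match the paper exactly, and you correctly single out $K_{\pm1,i}$ as the only genuinely new term. The gap is in your mechanism for $K_{\pm1,i}$. A ``one-dimensional Strichartz in $(x_1,t)$ plus $L^2(\mathbb R_+)$ Laplace transform in $x_2$'' scheme cannot yield \eqref{fls-strich} for the admissible pairs \eqref{adm-cond}: those are the \emph{two}-dimensional admissible pairs ($\tfrac1q+\tfrac1p=\tfrac12$), whereas one-dimensional dispersion in $x_1$ alone produces only the decay $|t-t'|^{-1/2}$ in the relevant $TT^*$ kernel and hence, after Hardy--Littlewood--Sobolev, only the one-dimensional admissibility $\tfrac2q+\tfrac1p=\tfrac12$. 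Moreover, the Laplace-transform bound of \cite{h1929} is an $L^2(\mathbb R_+)\to L^2(\mathbb R_+)$ statement and gives no route to the $L^p_{x_2}$ integrability with $p>2$ demanded by the $H^{s,p}(\mathbb R_+^2)$ norm. The missing idea is that one must also extract dispersive decay from the non-extendable $x_2$-direction: the paper runs a duality/$TT^*$ argument in which the kernel factorizes as $N_1(x_1-z_1,t-t')\,N_2(x_2+z_2,t-t')$ with $N_2(x_2,t)=\int_{\mathbb R_+}e^{-\lambda_2 x_2+i\lambda_2^2 t}\,d\lambda_2$, and applies van der Corput's lemma to obtain $|N_2|\lesssim|t|^{-1/2}$ \emph{uniformly in} $x_2\ge0$. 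Combined with the explicit $|t-t'|^{-1/2}$ from the one-dimensional Schr\"odinger kernel $N_1$, this gives the full two-dimensional decay $|t-t'|^{-1}$ at the $p=\infty$ endpoint; Riesz--Thorin interpolation against the $p=2$ endpoint (where the Laplace-transform bound is used, twice) then yields $Q_p\lesssim|t-t'|^{2/p-1}$, which is exactly what Hardy--Littlewood--Sobolev requires for the 2D admissible range. Your proposal never uses the oscillation $e^{ik_2^2t}$ in the boundary-layer profile, so this step cannot be reconstructed from what you wrote.

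A secondary issue: for $p\ne2$ you cannot ``split the $H^{s,p}$ derivative as $(1+k_1^2)^{s/2}$ against the $x_1$-Strichartz norm plus a $(k_2^2)^{s/2}$ piece'' --- the Bessel potential norm is not a Fourier-weighted $L^2$ norm when $p\ne2$. The paper circumvents this by first proving the estimate for integer $s$, where Calder\'on's theorem identifies $H^{s,p}(\mathbb R^2)$ with $W^{s,p}(\mathbb R^2)$ so that the derivatives can be taken literally and transferred onto the data profile $\psi_\pm$ inside the $TT^*$ argument, and then interpolating to fractional $s$; only at the very end is the $L^2$-based computation from the Sobolev estimate reused to bound $\no{\psi_\pm}_{H^s(\mathbb R^2)}$ by the $\mathcal X_{1,T}$ norms of $g_0$.
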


\begin{proof}
As for the Sobolev estimate of Theorem \ref{sob-est-t}, we write the solution $u=S\big[u_0, g_0, h_0; f\big]$ in the form \eqref{qnls-utm-sol-dec} and estimate each of the four terms \eqref{i12-def}-\eqref{k2-def} separately. 

For the terms $I_{\pm 1, \pm 2}$ defined by \eqref{i12-def}, we use once again the zero extension \eqref{0-ext} to express those terms in the form \eqref{0-ext-2}, which is valid for all $(x_1, x_2) \in \mathbb R^2$ and, due to the fact that $\mathcal F\{U_0(x_1, x_2)\}(\pm k_1, \pm k_2)  = \mathcal F\{U_0(\pm x_1, \pm x_2)\}(k_1, k_2)$, corresponds to the solution of the initial value problem for the homogeneous linear Schr\"odinger equation on the plane with initial data $U_0(\pm x_1, \pm x_2)$. 
Therefore, invoking the classical Strichartz estimates from the theory of the initial value problem (see \cite{s1977} and also Remark 2.3.8 in \cite{c2003}), we have
\begin{equation}\label{i12-strich}
\no{I_{\pm 1, \pm 2}}_{L_t^q((0, T); H_x^{s, p}(\mathbb R^2))}
\lesssim
\no{U_0}_{H^s(\mathbb R^2)}
\lesssim
\no{u_0}_{H^s(\mathbb R_+^2)}, \quad 0 \leq s < \frac 12,
\end{equation}
provided that $(q, p)$ satisfies the admissibility conditions \eqref{adm-cond}. Note that the restriction on $s$ allows us to employ Proposition \ref{es-p} in order to infer the last inequality in \eqref{i12-strich}.

The terms $J_{\pm 1, \pm 2}$ defined by \eqref{j12-def} can be handled in a similar way, namely, by using the zero extension \eqref{f-ext} to express them in the form \eqref{f-ext-2}, which allows us to readily invoke the Strichartz estimates of \cite{s1977,c2003} for the forced linear initial value problem with forcing $F(\pm x_1, \pm x_2, t)$ to deduce 
\begin{equation*}
\no{J_{\pm 1, \pm 2}}_{L_t^q((0, T); H_x^{s, p}(\mathbb R^2))}
\lesssim
\no{F}_{L_t^{\frac{r}{r-1}}((0, T); H_x^{s, \frac{\rho}{\rho-1}}(\mathbb R^2))}, \quad s\in\mathbb R,
\end{equation*}
where $(q, p)$ and $(r, \rho)$ are any two pairs satisfying \eqref{adm-cond}. For the particular choice $(r, \rho) = (\infty, 2)$, the above estimate reduces to
\begin{equation}\label{j12-strich}
\no{J_{\pm 1, \pm 2}}_{L_t^q((0, T); H_x^{s, p}(\mathbb R^2))}
\lesssim
\no{F}_{L_t^1((0, T); H_x^s(\mathbb R^2))}
\lesssim
\no{f}_{L_t^1((0, T); H_x^s(\mathbb R_+^2))}, \quad 0 \leq s < \frac 12,
\end{equation}
for any admissible pair $(q, p)$ where, as before, the restriction on $s$ allows us to infer the last inequality in \eqref{j12-strich} via Proposition \ref{es-p}.

We proceed to the boundary data terms $K_{\pm 1}$ and $K_{\pm 2}$. Since the relevant expressions \eqref{k1-def} and \eqref{k2-def} can be obtained from one another by swapping ``1'' with ``2'' and $g_0$ with  $h_0$, it suffices to estimate $K_{\pm 1}$. Parametrizing the contour $\p D_2$, we write $K_{\pm 1}$ as the sum \eqref{k1-comp}. We begin with the estimation of the component $K_{\pm 1, r}$ given by~\eqref{kr-def}. Since this expression makes sense for all $(x_1, x_2) \in \mathbb R^2$, we treat it like $I_{\pm 1, \pm 2}$ by writing it as 
\begin{equation*}
K_{\pm 1, r}(x_1, x_2, t) 
=
\frac{1}{(2\pi)^2}
\int_{\mathbb R}\int_{\mathbb R}
e^{ik_1x_1+ik_2x_2-i (k_1^2+k_2^2) t} \phi(k_1, k_2)  dk_2 dk_1
\end{equation*}
where
\begin{equation*}
\phi(k_1, k_2) 
=
\left\{
\begin{array}{ll}
2k_2 \widetilde  g_0(\pm k_1, k_1^2+k_2^2, T), & k_2>0,
\\
0, & k_2<0,
\end{array}
\right.
\quad k_1 \in \mathbb R.
\end{equation*}
Thus, similarly to \eqref{i12-strich}, 
\begin{equation*}
\no{K_{\pm 1, r}}_{L_t^q((0, T); H_x^{s, p}(\mathbb R^2))}
\lesssim
\no{\phi}_{H^s(\mathbb R^2)}, \quad s\in\mathbb R,
\end{equation*}
with $(q, p)$ satisfying \eqref{adm-cond}. Then, proceeding along the lines of \eqref{kr-temp1}-\eqref{k1r-hs}, we obtain
\begin{equation}\label{k1r-strich}
\no{K_{\pm 1, r}}_{L_t^q((0, T); H_x^{s, p}(\mathbb R^2))}
\lesssim
\no{g_0}_{\mathcal X_{1, T}^{s, \frac 14}} + \no{g_0}_{\mathcal X_{1, T}^{0, \frac{2s+1}{4}}}, 
\quad - \frac 12 \leq s < \frac 12.
\end{equation}

The second component of $K_1$, namely the term $K_{\pm 1, i}$ given by \eqref{ki-def}, does not make sense for $x_2<0$ and so it must be estimated via a different technique developed for the initial-boundary value problem setting (see \cite{hm2020}).
Suppose first that $s\in \mathbb N\cup \{0\}$. Then, thanks to a classical result by Calder\'on \cite{c1961}, the Bessel potential space $H^{s, p}(\mathbb R^2)$ coincides with the Sobolev space $W^{s, p}(\mathbb R^2)$ for all $1<p<\infty$. Hence, $\left\| \cdot \right\|_{H^{s, p}(\mathbb R^2)} \simeq \left\| \cdot \right\|_{W^{s, p}(\mathbb R^2)}$ and the same is true for restrictions of these spaces on $\mathbb R_+^2$, so that
\begin{equation}\label{hsp-eq}
\left\|K_{\pm 1, i}(t) \right\|_{H_x^{s, p}(\mathbb R_+^2)}
\simeq
\sum_{j=0}^s \sum_{j_1=0}^j
\left\|\p_{x_1}^{j_1} \p_{x_2}^{j-j_1} K_{\pm 1, i}(t) \right\|_{L_x^p(\mathbb R_+^2)}.
\end{equation}
In turn, noting also that $K_{\pm 1, i}$ makes sense for all $x_1, t \in \mathbb R$, we have
\begin{equation}\label{hsp-eq2}
\left\|K_{\pm 1, i} \right\|_{L_t^q((0, T); H_x^{s, p}(\mathbb R_+^2))}
\leq
\sum_{j=0}^s \sum_{j_1=0}^j
\left\|\p_{x_1}^{j_1} \p_{x_2}^{j-j_1} K_{\pm 1, i} \right\|_{L_t^q(\mathbb R; L_{x_1, x_2}^p(\mathbb R \times \mathbb R_+))}.
\end{equation}

Let $\psi_\pm \in L^2(\mathbb R^2)$ be defined through its Fourier transform on $\mathbb R^2$ by
\begin{equation}\label{psipm-def}
\mathcal F\{\psi_\pm\}(k_1, k_2) 
:= 
\left\{
\begin{array}{ll}
2k_2 \, \widetilde  g_0(\pm k_1, k_1^2-k_2^2, T), & k_2>0,
\\
0, & k_2<0,
\end{array}
\right.
\quad k_1 \in \mathbb R.
\end{equation}
Then, for each $j, j_1\in \left\{0, \ldots, s\right\}$ with $j_1 \leq j$, taking the derivative $\p_{x_1}^{j_1} \p_{x_2}^{j-j_1}$ of \eqref{ki-def} and noting that, by the inverse Fourier transform,
$$
\p_{x_1}^{j_1} \p_{x_2}^{j-j_1} 
\psi_{\pm}(x_1, x_2) = \frac{1}{(2\pi)^2} \int_{\mathbb R} \int_{\mathbb R_+} e^{ik_1x_1+ik_2x_2} (ik_1)^{j_1} (ik_2)^{j-j_1} \, 2k_2 \, \widetilde  g_0(\pm k_1, k_1^2-k_2^2, T) dk_2 dk_1,
$$
we can express $\p_{x_1}^{j_1} \p_{x_2}^{j-j_1} K_{\pm 1, i}$ in the form
\begin{equation}\label{k1i-k}
\begin{aligned}
\p_{x_1}^{j_1} \p_{x_2}^{j-j_1} K_{\pm 1, i}(x_1, x_2, t) 
&=
\frac{i^{j-j_1} }{(2\pi)^2}
\int_{\mathbb R}\int_{\mathbb R}
\mathcal K(x_1, x_2, y_1, y_2, t)\,  \p_{y_1}^{j_1} \p_{y_2}^{j-j_1}  \psi_\pm(y_1, y_2) dy_1 dy_2,
\\
\mathcal K(x_1, x_2, y_1, y_2, t) 
&:= 
\int_{\mathbb R}\int_{\mathbb R_+} e^{ik_1(x_1-y_1)-k_2x_2-ik_2y_2-i(k_1^2-k_2^2)t} dk_2 dk_1.
\end{aligned}
\end{equation}
Combining this representation with duality and the Cauchy-Schwarz inequality, we have
\begin{equation}\label{M-temp4}
\begin{aligned}
&\quad
\left\|\p_{x_1}^{j_1} \p_{x_2}^{j-j_1} K_{\pm 1, i} \right\|_{L_t^q(\mathbb R; L_{x_1, x_2}^p(\mathbb R \times \mathbb R_+))}
\\
&=
\sup \left\{ 
\left|\int_{\mathbb R} \int_{\mathbb R}\int_{\mathbb R_+} \p_{x_1}^{j_1} \p_{x_2}^{j-j_1} K_{\pm 1, i}(x_1, x_2, t) \eta(x_1, x_2, t) dx_2 dx_1 dt\right|:
\left\| \eta \right\|_{L_t^{q'}(\mathbb R; L_{x_1, x_2}^{p'}(\mathbb R\times \mathbb R_+))}=1
\right\}
\\
&\leq 
\sup \left\{ M \no{\p_{y_1}^{j_1} \p_{y_2}^{j-j_1}  \psi_\pm}_{L^2(\mathbb R^2)}: \left\| \eta \right\|_{L_t^{q'}(\mathbb R; L_{x_1, x_2}^{p'}(\mathbb R\times \mathbb R_+))}=1
\right\}
\end{aligned}
\end{equation}
where
\begin{equation}
M := 
\frac{1}{(2\pi)^2}
\left(
\int_{\mathbb R} \int_{\mathbb R} \left|\int_{\mathbb R} \int_{\mathbb R} \int_{\mathbb R_+}
\mathcal K(x_1, x_2, y_1, y_2, t) \eta(x_1, x_2, t) dx_2 dx_1 dt\right|^2  dy_2 dy_1 \right)^{\frac 12}.
\nn
\end{equation}

Our goal is to estimate $M$. Note that
\begin{equation}
M^2 
=
\frac{1}{(2\pi)^4}
\int_{\mathbb R} \int_{\mathbb R} \int_{\mathbb R}  \int_{\mathbb R_+} \int_{\mathbb R}  \int_{\mathbb R_+} 
\eta(x_1, x_2, t) \overline{\eta(z_1, z_2, t')} 
N(x_1,x_2, z_1, z_2, t, t')  dz_2dz_1 dx_2 dx_1 dt' dt
\label{M-temp1}
\end{equation}
where
$$
\begin{aligned}
N(x_1,x_2, z_1, z_2, t, t')
&:=
\int_{\mathbb R} \int_{\mathbb R}  \mathcal K(x_1, x_2, y_1, y_2, t) \overline{\mathcal K(z_1,z_2,y_1,y_2,t')} dy_2dy_1
\\
&=
\int_{\mathbb R}
\int_{\mathbb R_+} e^{-i\lambda_1 z_1-\lambda_2 z_2 + i(\lambda_1^2-\lambda_2^2)t'}
\int_{\mathbb R} \int_{\mathbb R} e^{i\lambda_1y_1+i\lambda_2 y_2}
\mathcal K(x_1, x_2, y_1, y_2, t) dy_2 dy_1 d\lambda_2 d\lambda_1
\end{aligned}
$$
Hence, observing that $\mathcal K$ can be regarded as the Fourier transform 
$$
\mathcal K(x_1, x_2, y_1, y_2, t) = \mathcal F_{k_1, k_2}\left\{\mathcal H(k_2) e^{ik_1x_1-k_2x_2-i(k_1^2-k_2^2)t} \right\}(y_1, y_2)
$$
with $\mathcal H(k_2)$ denoting the Heaviside function, we use the inverse Fourier transform to write $N$ as
$$
\begin{aligned}
N(x_1,x_2, z_1, z_2, t, t')
&=
(2\pi)^2 
\int_{\mathbb R}
\int_{\mathbb R_+} e^{-i\lambda_1 z_1-\lambda_2 z_2 + i(\lambda_1^2-\lambda_2^2)t'}
e^{i\lambda_1x_1-\lambda_2x_2-i(\lambda_1^2-\lambda_2^2)t} d\lambda_2 d\lambda_1
\\
&=
(2\pi)^2 N_1(x_1-z_1, t-t') N_2(x_2+z_2,  t-t')
\end{aligned}
$$
where
\begin{equation}\label{IJ-def-strich}
N_1(x_1, t) := \int_{\mathbb R} e^{i\lambda_1x_1-i\lambda_1^2 t} d\lambda_1 = e^{-i \frac \pi 4 \text{sgn}(t)} \sqrt \pi \, 
e^{i\frac{x_1^2}{4t}} |t|^{-\frac12},
\quad
N_2(x_2, t) := \int_{\mathbb R_+} e^{-\lambda_2 x_2 + i\lambda_2^2 t} d\lambda_2.
\end{equation}
In view of this rearrangement of $N$, using H\"older's inequality in $x_1, x_2$ (with exponents $p, p'$) and $t$ (with exponents $q, q'$) in the expression  \eqref{M-temp1} for $M^2$, we have
\begin{equation}\label{Q-0}
M^2
\leq
\frac{1}{(2\pi)^2} 
\bigg\|
\int_{\mathbb R} Q_p(t, t') dt'
\bigg\|_{L_t^q(\mathbb R)}
\left\| \eta \right\|_{L_t^{q'}(\mathbb R; L_{x_1, x_2}^{p'}(\mathbb R\times \mathbb R_+))}
\end{equation}
where
$$
Q_p(t, t') := 
\bigg\|
\int_{\mathbb R}  \int_{\mathbb R_+} 
N_1(x_1-z_1, t-t') N_2(x_2+z_2,  t-t') \overline{\eta(z_1, z_2, t')} 
dz_2 dz_1
\bigg\|_{L_{x_1, x_2}^p(\mathbb R \times \mathbb R_+)}.
$$

Thus, our goal becomes to estimate $Q_p$. As in the proof of the classical Strichartz estimates, we first handle the cases $p=2$ and $p=\infty$ and then interpolate between the two resulting estimates in order to cover the entire range $2\leq p \leq \infty$.
For $p=2$, combining the definition of $N_1$ with  Plancherel's theorem (twice), we find
\begin{align}
Q_2^2(t, t')
&=
\int_{\mathbb R_+} \int_{\mathbb R} 
\bigg|
\int_{\mathbb R} 
e^{i\lambda_1x_1-i\lambda_1^2(t-t')}
\int_{\mathbb R} e^{-i\lambda_1z_1} \int_{\mathbb R_+} 
 N_2(x_2+z_2,  t-t') \overline{\eta(z_1, z_2, t')} 
dz_2 dz_1
d\lambda_1
\bigg|^2 dx_1 dx_2
\nn\\
&=
\frac{1}{2\pi}
\int_{\mathbb R_+} \int_{\mathbb R} 
\bigg|
\int_{\mathbb R} e^{-i\lambda_1z_1} \int_{\mathbb R_+} 
 N_2(x_2+z_2,  t-t') \overline{\eta(z_1, z_2, t')} 
dz_2 dz_1
\bigg|^2 d\lambda_1 dx_2
\nn\\
&=
\int_{\mathbb R_+} \int_{\mathbb R} 
\bigg|
\int_{\mathbb R_+} 
N_2(x_2+z_2,  t-t') \overline{\eta(z_1, z_2, t')} 
dz_2 
\bigg|^2 dz_1 dx_2.
\nn
\end{align}
Hence, substituting for $N_2$ via \eqref{IJ-def-strich} and using (twice) the boundedness of the Laplace transform in $L^2(\mathbb R_+)$ (Lemma~3.2 in \cite{fhm2017}), we obtain
\begin{equation}
\begin{aligned}
Q_2^2(t, t')
&= 
\int_{\mathbb R}
\int_{\mathbb R_+}
\bigg|
\int_{\mathbb R_+} e^{-\lambda_2 x_2}
\int_{\mathbb R_+} 
e^{-\lambda_2 z_2+i\lambda_2^2 (t-t')} 
 \overline{\eta(z_1, z_2, t')} 
dz_2   d\lambda_2
\bigg|^2   dx_2 dz_1
\\
&\leq
\pi \int_{\mathbb R}
\int_{\mathbb R_+}
\bigg|
\int_{\mathbb R_+} 
e^{-\lambda_2 z_2}
e^{i\lambda_2^2 (t-t')}  \overline{\eta(z_1, z_2, t')}  dz_2   
\bigg|^2   d\lambda_2 dz_1
\leq
\pi^2 \left\|\eta(t')\right\|_{L_{x_1, x_2}^2(\mathbb R \times \mathbb R_+)}^2.
\label{Q2-est}
\end{aligned}
\end{equation}
For $p=\infty$, we treat the $z_1$ integral in $Q_\infty$ as a convolution and employ Young's convolution inequality to find
\begin{align}
Q_\infty(t, t')
&=
\sup_{x_2 \in \mathbb R_+}
\bigg\|
\bigg(
N_1(\cdot, t-t') *
\int_{\mathbb R_+} 
N_2(x_2+z_2,  t-t') \overline{\eta(\cdot, z_2, t')} 
dz_2
\bigg) (x_1)  
\bigg\|_{L_{x_1}^\infty(\mathbb R)}
\nn\\
&\leq
\left\|N_1(t-t')\right\|_{L_{x_1}^\infty(\mathbb R)}
\sup_{x_2\in\mathbb R_+} 
\left\|
\int_{\mathbb R_+} N_2(x_2+z_2,  t-t') \overline{\eta(x_1, z_2, t')} 
dz_2
\right\|_{L_{x_1}^1(\mathbb R)}
\nn
\end{align}
so that in view of the explicit formula for $N_1$ in \eqref{IJ-def-strich} we obtain
\begin{equation}
Q_\infty(t, t')
\leq
\sqrt \pi 
\left|t-t'\right|^{-\frac 12}
\sup_{x_2, z_2\in\mathbb R_+} \left|N_2(x_2+z_2,  t-t')\right|
\left\|\eta(t')\right\|_{L_{x_1, x_2}^1(\mathbb R \times \mathbb R_+)}.
\nn
\end{equation}
By van der Corput's lemma (see page 334 of \cite{s1993}), we have
$$
\bigg|\int_0^b e^{-\lambda_2 x_2 + i\lambda_2^2 t} d\lambda_2\bigg|
\leq
c \left|t\right|^{-\frac 12} \bigg(\left|e^{-b x_2}\right| + \int_0^b \left|\p_{\lambda_2} e^{-\lambda_2 x_2}\right| d\lambda_2 \bigg)
=
c \left|t\right|^{-\frac 12}, \quad x_2 \in \mathbb R_+,
$$
where $c$ is a constant \textit{independent} of $x_2, t, b$. Taking the limit of this inequality as $b\to\infty$, we deduce (after recalling~\eqref{IJ-def-strich}) that $\left|N_2(x_2+z_2, t-t')\right| \leq c\left|t-t'\right|^{-\frac 12}$ with $c$ \textit{independent} of $x_2, t$. Therefore, 
\begin{equation}\label{Qinf-est}
Q_\infty(t, t')
\leq
c \sqrt \pi \left|t-t'\right|^{-1}\left\|\eta(t')\right\|_{L_{x_1, x_2}^1(\mathbb R \times \mathbb R_+)}.
\end{equation}
Thanks to the Riesz-Thorin interpolation theorem, estimates \eqref{Q2-est} and \eqref{Qinf-est} imply
\begin{equation}\label{Q-interp-est}
Q_p(t, t')
\lesssim
\left|t-t'\right|^{\frac 2p-1} \left\|\eta(t')\right\|_{L_{x_1, x_2}^{p'}(\mathbb R \times \mathbb R_+)}, 
\quad
2\leq p \leq \infty.
\end{equation}

Having estimated $Q_p$, we return to the estimation of $M^2$. 
For $2<p< \infty$, we combine~\eqref{Q-0} with~\eqref{Q-interp-est} and then employ the Hardy-Littlewood-Sobolev inequality (Theorem 2.6 in \cite{lp2009}) to obtain
\begin{align} 
M^2
&\lesssim
\Big\|
\int_{\mathbb R}
\left|t-t'\right|^{\frac 2p-1} \left\|\eta(t')\right\|_{L_{x_1, x_2}^{p'}(\mathbb R \times \mathbb R_+)}
dt'
\Big\|_{L_t^q(\mathbb R)}
\left\| \eta \right\|_{L_t^{q'}(\mathbb R; L_{x_1, x_2}^{p'}(\mathbb R\times \mathbb R_+))}
\nn\\
&\lesssim
\left\|\eta\right\|_{L_t^r(\mathbb R; L_{x_1, x_2}^{p'}(\mathbb R \times \mathbb R_+))}
\left\| \eta \right\|_{L_t^{q'}(\mathbb R; L_{x_1, x_2}^{p'}(\mathbb R\times \mathbb R_+))}
\nn
\end{align}
where $\frac 1r = \frac 1q + \frac 2p$. Note that this step is only valid for $2<p<\infty$ and not for $p= 2, \infty$.  Then, thanks to the admissibility condition \eqref{adm-cond}, we have that $r=q'$ and so $M^2 \lesssim \left\| \eta \right\|_{L_t^{q'}(\mathbb R; L_{x_1, x_2}^{p'}(\mathbb R\times \mathbb R_+))}^2$. 

The case $p=2$ can be handled separately. Indeed, in that case, \eqref{adm-cond} implies $q=\infty$ so we combine \eqref{Q-0} with~\eqref{Q2-est}  to infer $M^2 \leq \frac{1}{4\pi} \left\| \eta \right\|_{L_t^\infty(\mathbb R; L_{x_1, x_2}^2(\mathbb R\times \mathbb R_+))}^2$. On the other hand, the case $p=\infty$ must be excluded, hence the relevant restriction in \eqref{adm-cond}.

Overall, for any admissible pair \eqref{adm-cond}, we have shown that $M\lesssim \left\| \eta \right\|_{L_t^{q'}(\mathbb R; L_{x_1, x_2}^{p'}(\mathbb R\times \mathbb R_+))}$. Thus, returning to~\eqref{M-temp4},  
$$
\left\|\p_{x_1}^{j_1} \p_{x_2}^{j-j_1} K_{\pm 1, i} \right\|_{L_t^q(\mathbb R; L_{x_1, x_2}^p(\mathbb R \times \mathbb R_+))}
\lesssim
\no{\p_{y_1}^{j_1} \p_{y_2}^{j-j_1}  \psi_\pm}_{L^2(\mathbb R^2)}.
$$
Combining this estimate with \eqref{hsp-eq2} and recalling the equivalence \eqref{hsp-eq}, we conclude 
\begin{equation}\label{k1i-strich-int}
\no{K_{\pm 1, i}}_{L_t^q((0, T); H_x^{s, p}(\mathbb R_+^2))}
\lesssim
\no{\psi_\pm}_{H^s(\mathbb R^2)}, \quad s\in\mathbb N\cup \{0\}.
\end{equation}
By interpolation (e.g. see Theorem 5.1 in \cite{lm1972}), the validity of estimate \eqref{k1i-strich-int} can be extended to all $s\geq 0$. In addition, by Plancherel's theorem and the definition \eqref{psipm-def} of $\psi_\pm$, we have
$$
\no{\psi_\pm}_{H_x^s(\mathbb R^2)}
\simeq
\no{\left(1+k_1^2+k_2^2\right)^{\frac s2}
2k_2 \widetilde  g_0(\pm k_1, k_1^2-k_2^2, T)}_{L_{k_1, k_2}^2(\mathbb R\times\mathbb R_+)}.
$$
Therefore, analogously to \eqref{kr-temp1}-\eqref{k1r-hs}, we obtain
\begin{equation}\label{k1i-strich}
\no{K_{\pm 1, i}}_{L_t^q((0, T); H_x^{s, p}(\mathbb R_+^2))}
\lesssim
\no{g_0}_{\mathcal X_{1, T}^{s, \frac 14}} + \no{g_0}_{\mathcal X_{1, T}^{0, \frac{2s+1}{4}}}, \quad 0\leq s < \frac 12.
\end{equation}

Combining \eqref{k1r-strich} and \eqref{k1i-strich} with their analogues for $K_{\pm 2, i}$ and the previously derived estimates \eqref{i12-strich} and~\eqref{j12-strich}  allows us to complete the proof of Theorem \ref{strich-est-t}.
\end{proof}

\section{Nonlinear analysis: proof of Theorem \ref{lwp-t}}
\label{lwp-s}

In this section, we prove the Hadamard well-posedness of the NLS equation on the spatial quarter-plane~\eqref{qnls-ibvp} as stated in Theorem \ref{lwp-t}.
Having established the key linear estimate \eqref{fls-le}, our goal is to replace the forcing $f$ by the NLS nonlinearity $\pm |u|^{\alpha-1} u$ in the linear solution operator $S\big[u_0, g_0, h_0; f\big]$ and then use estimate~\eqref{fls-le} together with appropriate nonlinear estimates (see below) in order to prove that, for all $0\leq s < \frac 12$ and all $\alpha>1$, the iteration map 
\begin{equation}\label{it-map}
u \mapsto \Phi(u) := S\big[u_0, g_0, h_0; \pm |u|^{\alpha-1} u\big]
\end{equation}
is a contraction in (a subset of) the solution space
$$
\mathcal S := C_t([0, T]; H_x^s(\mathbb R_+^2)) \cap L_t^q((0, T); H_x^{s, p}(\mathbb R_+^2)),
$$
where the lifespan $T>0$ and the admissible pair $(q, p)$ are yet to be determined. 
Since $H^s(\mathbb R_+^2)$ is not an algebra in the low-regularity setting $0\leq s < \frac 12$, the norm $\no{|u|^{\alpha-1} u}_{L_t^1((0, T); H_x^s(\mathbb R_+^2))}$ that arises from estimate \eqref{fls-le}, as well as its counterpart involving the difference of nonlinearities, will be handled via the following result.
\begin{proposition}\label{qnls-non-p}
Suppose $0\leq s < 1$. Then, for 
\begin{equation}\label{ds-est-cond-t}
\quad p=\frac{2\alpha}{1+(\alpha-1)s}, \quad q = \frac{2\alpha}{(1-s)(\alpha-1)}, 
\quad
2\leq \alpha \leq \frac{3-s}{1-s}
\end{equation}
we have the estimates
\begin{align}\label{non-est}
\left\| |\varphi|^{\alpha-1} \varphi \right\|_{L_t^1((0, T); H_x^s(\mathbb R^2))}
&\lesssim 
{T}^{\frac{q-\alpha}{q}} \left\| \varphi \right\|_{L_t^q((0, T); H_x^{s, p}(\mathbb R^2))}^\alpha,
\\
\label{non-diff-est}
\left\| |\varphi|^{\alpha-1} \varphi - |\psi|^{\alpha-1} \psi \right\|_{L_t^1((0, T); H_x^s(\mathbb R^2))}
&\lesssim
T^{\frac{q-\alpha}{q}}
\left(
\left\| \varphi \right\|_{L_t^q((0, T); H_x^{s, p}(\mathbb R^2))}^{\alpha-1}
+
\left\| \psi \right\|_{L_t^q((0, T); H_x^{s, p}(\mathbb R^2))}^{\alpha-1}
\right) 
\nn\\
&\quad
\cdot
\left\| \varphi - \psi \right\|_{L_t^q((0, T); H_x^{s, p}(\mathbb R^2))}.
\end{align}
\end{proposition}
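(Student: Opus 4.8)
The plan is to dispose of the temporal variable first via Hölder's inequality, thereby reducing both inequalities to fixed-time estimates in the spatial Bessel potential spaces on $\mathbb R^2$, and then to establish those estimates by means of the fractional Leibniz and chain rules together with the Sobolev embedding theorem. As a preliminary observation, the hypothesis $\alpha\leq\frac{3-s}{1-s}$ is equivalent to $(1-s)(\alpha-1)\leq 2$, hence to $q\geq\alpha$, and a direct computation gives $\frac1q+\frac1p=\frac12$ with $2\leq p<\infty$, so $(q,p)$ is admissible in the sense of \eqref{adm-cond}. Since $q\geq\alpha$, Hölder's inequality in $t\in(0,T)$ yields $\|G\|_{L_t^1(0,T)}\leq T^{\frac{q-\alpha}{q}}\|G\|_{L_t^{q/\alpha}(0,T)}$ for any measurable $G$; applying this with $G(t)=\||\varphi(t)|^{\alpha-1}\varphi(t)\|_{H^s(\mathbb R^2)}$ and using that the $L_t^{q/\alpha}$-norm of $\|\varphi(t)\|_{H^{s,p}(\mathbb R^2)}^\alpha$ is $\|\varphi\|_{L_t^q((0,T);H^{s,p}(\mathbb R^2))}^\alpha$, one reduces \eqref{non-est} to the fixed-time estimate
\begin{equation}\label{non-ptw}
\big\||\varphi|^{\alpha-1}\varphi\big\|_{H^s(\mathbb R^2)}\lesssim\|\varphi\|_{H^{s,p}(\mathbb R^2)}^\alpha ,
\end{equation}
and, after one further Hölder splitting in $t$ with exponents $\tfrac{q}{\alpha-1}$ and $q$, reduces \eqref{non-diff-est} to
\begin{equation}\label{non-diff-ptw}
\big\||\varphi|^{\alpha-1}\varphi-|\psi|^{\alpha-1}\psi\big\|_{H^s(\mathbb R^2)}\lesssim\Big(\|\varphi\|_{H^{s,p}(\mathbb R^2)}^{\alpha-1}+\|\psi\|_{H^{s,p}(\mathbb R^2)}^{\alpha-1}\Big)\|\varphi-\psi\|_{H^{s,p}(\mathbb R^2)} .
\end{equation}

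To prove \eqref{non-ptw}, I would write $F(z)=|z|^{\alpha-1}z$, so that $|F'(z)|\lesssim|z|^{\alpha-1}$ and, since $\alpha\geq 2$, also $|F''(z)|\lesssim|z|^{\alpha-2}$, and use the equivalence $\|\cdot\|_{H^s(\mathbb R^2)}\simeq\|\cdot\|_{L^2}+\|(-\Delta)^{s/2}\cdot\|_{L^2}$. The $L^2$-part is immediate: $\|F(\varphi)\|_{L^2}=\|\varphi\|_{L^{2\alpha}}^\alpha\lesssim\|\varphi\|_{H^{s,p}}^\alpha$ by the embedding $H^{s,p}(\mathbb R^2)\hookrightarrow L^{2\alpha}(\mathbb R^2)$, which holds because $p\leq 2\alpha$ and $\tfrac1p-\tfrac s2=\tfrac{1-s}{2\alpha}\leq\tfrac1{2\alpha}$ for $s\geq 0$. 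For the homogeneous part, the fractional chain rule (see, e.g., \cite{c2003}) gives, for $0<s<1$,
\begin{equation*}
\big\|(-\Delta)^{s/2}F(\varphi)\big\|_{L^2}\lesssim\big\||\varphi|^{\alpha-1}\big\|_{L^{p_2}}\big\|(-\Delta)^{s/2}\varphi\big\|_{L^p},\qquad\tfrac12=\tfrac1{p_2}+\tfrac1p ,
\end{equation*}
so that $p_2=\tfrac{2\alpha}{(\alpha-1)(1-s)}$ and $\||\varphi|^{\alpha-1}\|_{L^{p_2}}=\|\varphi\|_{L^{2\alpha/(1-s)}}^{\alpha-1}$; since $\tfrac1p-\tfrac s2=\tfrac{1-s}{2\alpha}$ is exactly the critical Sobolev exponent in two dimensions, the embedding $H^{s,p}(\mathbb R^2)\hookrightarrow L^{2\alpha/(1-s)}(\mathbb R^2)$ is available (it requires $sp<2$, i.e. $s<1$), and, together with $\|(-\Delta)^{s/2}\varphi\|_{L^p}\leq\|\varphi\|_{H^{s,p}}$, it closes \eqref{non-ptw} for $0<s<1$; the borderline case $s=0$ follows from the $L^2$ computation alone, since then $p=2\alpha$.

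For \eqref{non-diff-ptw}, I would write $F(\varphi)-F(\psi)=\int_0^1\big[F_z(w_\theta)(\varphi-\psi)+F_{\bar z}(w_\theta)\overline{(\varphi-\psi)}\big]\,d\theta$ with $w_\theta:=\psi+\theta(\varphi-\psi)$ and $F_z,F_{\bar z}$ the Wirtinger derivatives of $F$, which satisfy $|F_z(w_\theta)|,|F_{\bar z}(w_\theta)|\lesssim|\varphi|^{\alpha-1}+|\psi|^{\alpha-1}$ as well as $|F_z'(w_\theta)|,|F_{\bar z}'(w_\theta)|\lesssim|w_\theta|^{\alpha-2}$ (here the hypothesis $\alpha\geq 2$ enters). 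The pointwise bound on $F_z,F_{\bar z}$ handles the $L^2$-part exactly as in \eqref{non-ptw}. For the homogeneous part, one distributes $(-\Delta)^{s/2}$ over each product $F_z(w_\theta)(\varphi-\psi)$ by the fractional Leibniz rule and then applies the fractional chain rule to $F_z(w_\theta)$; thanks to $\alpha\geq 2$ this last step is legitimate for every $0<s<1$ (for $1<\alpha<2$ the weaker Hölder regularity of $F'$ would force the additional restriction $s<\alpha-1$). All the Lebesgue exponents arising in this cascade are governed by the single relation $\tfrac1p-\tfrac s2=\tfrac{1-s}{2\alpha}$, so that each of the factors $|w_\theta|^{\alpha-2}$, $\varphi-\psi$, $(-\Delta)^{s/2}w_\theta$ and $(-\Delta)^{s/2}(\varphi-\psi)$ lands in a Lebesgue space controlled by $H^{s,p}(\mathbb R^2)$ via the same (critical) Sobolev embedding used above, and collecting the factors gives the right-hand side of \eqref{non-diff-ptw}.

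The step I expect to be the main obstacle is the bookkeeping in the proof of \eqref{non-diff-ptw}: one must check that each successive application of the fractional Leibniz and chain rules is admissible for the merely $C^1$ (for $\alpha\geq 2$, $C^2$) nonlinearity $F$, that the Hölder exponents appearing at every stage are finite and strictly between $1$ and $\infty$, and that none of the Sobolev embeddings exceeds the critical threshold. This is exactly where the three hypotheses are consumed: $0\leq s<1$ keeps $H^{s,p}(\mathbb R^2)$ subcritical and the chain rule available, $\alpha\geq 2$ secures enough regularity of $F'$ for the difference estimate, and $\alpha\leq\frac{3-s}{1-s}$ (equivalently $q\geq\alpha$) is what makes the temporal Hölder reduction possible.
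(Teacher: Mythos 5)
Your proposal follows essentially the same route as the paper: Hölder in time with exponent $q/\alpha$ (using $q\geq\alpha\Leftrightarrow\alpha\leq\frac{3-s}{1-s}$) to produce the factor $T^{\frac{q-\alpha}{q}}$, then a fixed-time estimate in $H^{s,p}(\mathbb R^2)$ obtained from the Christ--Weinstein fractional chain rule with exponents $\frac12=\frac1{p_2}+\frac1p$, $p_2=\frac{2\alpha}{(\alpha-1)(1-s)}$, and the embedding $\dot H^{s,p}(\mathbb R^2)\hookrightarrow L^{\frac{2\alpha}{1-s}}(\mathbb R^2)$ (the paper phrases this as the $\theta=1$ case of a fractional Gagliardo--Nirenberg inequality); for the difference, the same mean-value representation with $w_\theta=\psi+\theta(\varphi-\psi)$ followed by the fractional Leibniz rule. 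The exponent bookkeeping you give matches the paper's exactly, and the separate $s=0$ and $L^2$-level computations are also the same.

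The one place where your argument, as written, would fail is the endpoint $\alpha=2$, which the statement includes. In the difference estimate the anti-holomorphic contribution involves $F_{\bar z}(w_\theta)=\frac{\alpha-1}{2}|w_\theta|^{\alpha-3}w_\theta^{2}$, and for $\alpha=2$ the map $z\mapsto z^2/|z|$ is Lipschitz but \emph{not} $C^1$ at the origin (its difference quotient behaves like $e^{2i\theta}$ in polar coordinates, so no derivative exists there); likewise $F(z)=|z|z$ is $C^1$ but not $C^2$, contrary to your parenthetical claim. Hence the fractional chain rule, which requires a $C^1$ nonlinearity, cannot be applied to $D^s\big(|Z_\lambda|^{\alpha-3}Z_\lambda^2\big)$ when $\alpha=2$, and your assertion that ``thanks to $\alpha\geq2$ this last step is legitimate for every $0<s<1$'' is not quite right at that endpoint. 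The paper closes this case by a separate direct argument: it bounds $\big\|D^s(Z_\lambda^2/|Z_\lambda|)\big\|_{L^{\frac{4}{1+s}}}$ by $\|Z_\lambda^2/|Z_\lambda|\|_{H^{s,\frac{4}{1+s}}}$, uses the embedding $H^{s,\frac{4}{1+s}}(\mathbb R^2)\hookrightarrow L^{\frac{4}{1-s}}(\mathbb R^2)$ together with the Lipschitz bound $|Z_\lambda^2/|Z_\lambda||\leq|Z_\lambda|$, and then the same Gagliardo--Nirenberg step. You would need to insert such an argument (or some other substitute for the chain rule) to cover $\alpha=2$; for $\alpha>2$ your cascade goes through as described.
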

The techniques needed for proving Proposition \ref{qnls-non-p} are standard in the NLS literature. Nevertheless, for the sake of completeness, we provide an outline of the proof at the end of the current section. Prior to this, we employ Proposition \ref{qnls-non-p} to establish that the map \eqref{it-map} is indeed a contraction in an appropriate subset of the space $\mathcal S$.
For each $t\in [0, T]$,  given $u(t) \in H_x^{s, p}(\mathbb R_+^2)$ there is an extension $\varphi(t) \in H_x^{s, p}(\mathbb R^2)$  such that 
\begin{equation}\label{ext-of-u}
\left\|\varphi(t) \right\|_{H_x^{s, p}(\mathbb R^2)}
\leq
2 \left\| u(t) \right\|_{H_x^{s, p}(\mathbb R_+^2)}.
\end{equation}
Such an extension is guaranteed by the infimum approximation property (see also Theorem 5.22 and Remark~5.23 in \cite{af2003}). Moreover, since $\left\| u(t) \right\|_{H^{s, p}(\mathbb R_+^2)} \leq \left\| U(t) \right\|_{H^{s, p}(\mathbb R^2)}$ for any extension $U$ of $u$, we actually have
$$
\left\| \varphi(t) \right\|_{H^{s, p}(\mathbb R^2)} \simeq \left\| u(t) \right\|_{H^{s, p}(\mathbb R_+^2)}.
$$
In addition, since $\varphi = u$ almost everywhere on $\mathbb R_+^2$,  
$$
\left\| |u|^{\alpha-1} u (t) \right\|_{H_x^s(\mathbb R_+^2)}
=
\left\| |\varphi|^{\alpha-1} \varphi(t) \right\|_{H_x^s(\mathbb R_+^2)}
\leq
\left\| |\varphi|^{\alpha-1} \varphi(t) \right\|_{H_x^s(\mathbb R^2)}.
$$
Hence, combining the nonlinear estimate \eqref{non-est}   with the extension inequality \eqref{ext-of-u}, we obtain
\begin{equation}\label{non-est-qp}
\begin{aligned}
\left\| |u|^{\alpha-1} u\right\|_{L_t^1((0, T); H_x^s(\mathbb R_+^2))} 
&\lesssim
T^{\frac{q-\alpha}{q}} \left\| \varphi \right\|_{L_t^q((0, T); H_x^{s, p}(\mathbb R^2))}^\alpha
\lesssim
T^{\frac{q-\alpha}{q}} \left\| u \right\|_{L_t^q((0, T); H_x^{s, p}(\mathbb R_+^2))}^\alpha.
\end{aligned}
\end{equation}
Similarly, given $v\in L_t^1((0, T); H_x^{s, p}(\mathbb R_+^2))$ and letting $\psi \in L_t^1((0, T); H_x^{s, p}(\mathbb R^2))$ be an extension of  $v$ satisfying \eqref{ext-of-u},  the nonlinear estimate~\eqref{non-diff-est} yields
\begin{equation}\label{non-diff-est-qp}
\begin{aligned}
\left\| |u|^{\alpha-1} u - |v|^{\alpha-1} v   \right\|_{L_t^1((0, T); H_x^s(\mathbb R_+^2))}
&\lesssim
T^{\frac{q-\alpha}{q}}
\left(
\left\| u \right\|_{L_t^q((0, T); H_x^{s, p}(\mathbb R_+^2))}^{\alpha-1}
+
\left\| v \right\|_{L_t^q((0, T); H_x^{s, p}(\mathbb R_+^2))}^{\alpha-1}
\right)
\\
&\quad
\cdot
\left\| u - v \right\|_{L_t^q((0, T); H_x^{s, p}(\mathbb R_+^2))}.
\end{aligned}
\end{equation}

Suppose that $0\leq s < \frac 12$, $2\leq \alpha \leq  \frac{3-s}{1-s}$, and $q, p$ are given by \eqref{qp-t}. Then, combining the linear estimate~\eqref{fls-le} with the nonlinear estimate~\eqref{non-est-qp} (which can be employed thanks to our choice of $q, p, \alpha$), we have
\begin{equation}\label{into-1}
\begin{aligned}
\no{S\big[u_0, g_0, h_0; \pm |u|^{\alpha-1} u\big]}_{\mathcal S}
\leq
c_{s, \alpha} \Big(&\no{u_0}_{H^s(\mathbb R_+^2)} + \no{g_0}_{\mathcal X_{1, T}^{0, \frac{2s+1}{4}}} + \no{g_0}_{\mathcal X_{1, T}^{s, \frac 14}} 
\\
&+ \no{h_0}_{\mathcal X_{2, T}^{0, \frac{2s+1}{4}}} + \no{h_0}_{\mathcal X_{2, T}^{s, \frac 14}}
+ T^{\frac{3-s-\alpha(1-s)}{2}} \no{u}_{L_t^q((0, T); H_x^{s, p}(\mathbb R_+^2))}^\alpha
\Big).
\end{aligned}
\end{equation}
Let $\mathcal B(0, \varrho) := \left\{u\in \mathcal S: \no{u}_{\mathcal S} \leq \varrho\right\}$ denote the closed ball in $\mathcal S$ with center at the origin and radius
\begin{equation}
\varrho 
=
\varrho(T)
:=  
2c_{s, \alpha} \Big(\no{u_0}_{H^s(\mathbb R_+^2)} + \no{g_0}_{\mathcal X_{1, T}^{0, \frac{2s+1}{4}}} + \no{g_0}_{\mathcal X_{1, T}^{s, \frac 14}} 
+ \no{h_0}_{\mathcal X_{2, T}^{0, \frac{2s+1}{4}}} + \no{h_0}_{\mathcal X_{2, T}^{s, \frac 14}}\Big).
\end{equation}
Then, by estimate \eqref{into-1}, any $u\in \mathcal B(0, \varrho)$ satisfies
$$
\no{S\big[u_0, g_0, h_0; \pm |u|^{\alpha-1} u\big]}_{\mathcal S}
\leq
\frac{\varrho}{2} + c_{s, \alpha}  T^{\frac{3-s-\alpha(1-s)}{2}} \varrho^\alpha.
$$
Thus, if
\begin{equation}\label{into-cond}
2 c_{s, \alpha} \varrho^{\alpha-1} T^{\frac{3-s-\alpha(1-s)}{2}} \leq 1
\end{equation}
then the map $u \mapsto S\big[u_0, g_0, h_0; \pm |u|^{\alpha-1} u\big]$ takes $\mathcal B(0, \varrho)$ into itself. 
For $2\leq \alpha < \frac{3-s}{1-s}$, the exponent $\frac{3-s-\alpha(1-s)}{2}$ of $T$ is positive and also the radius $\varrho$ remains bounded as $T\to 0^+$. Hence, for $2\leq \alpha < \frac{3-s}{1-s}$ there exists a $T>0$ such that the condition~\eqref{into-cond} can be fulfilled without any restrictions on (the size of) the initial data.
On the other hand, for $\alpha = \frac{3-s}{1-s}$ the exponent of $T$ vanishes and so the condition~\eqref{into-cond}  is satisfied only if $\varrho$ and, in particular, the size of the initial data (since the size of the boundary data is controlled by $T$) is sufficiently small.

Furthermore, for any $u, v \in \mathcal B(0, \varrho)$, the linear estimate \eqref{fls-le} and the nonlinear estimate \eqref{non-diff-est-qp} imply
\begin{equation}\label{contr-1}
\begin{aligned}
&\quad
\no{S\big[u_0, g_0, h_0; \pm |u|^{\alpha-1} u\big]-S\big[u_0, g_0, h_0; \pm |v|^{\alpha-1} v\big]}_{\mathcal S}
=
\no{S\big[0, 0, 0; \pm \left(|u|^{\alpha-1} u - |v|^{\alpha-1} v\right)\big]}_{\mathcal S}
\\
&\leq
c_{s, \alpha} T^{\frac{3-s-\alpha(1-s)}{2}} 
\left(
\left\| u \right\|_{L_t^q((0, T); H_x^{s, p}(\mathbb R_+^2))}^{\alpha-1}
+
\left\| v \right\|_{L_t^q((0, T); H_x^{s, p}(\mathbb R_+^2))}^{\alpha-1}
\right)
\left\| u - v \right\|_{L_t^q((0, T); H_x^{s, p}(\mathbb R_+^2))}
\\
&\leq
c_{s, \alpha} T^{\frac{3-s-\alpha(1-s)}{2}} \cdot 2\varrho^{\alpha-1}
\left\| u - v \right\|_{\mathcal S}.
\end{aligned}
\end{equation}
Hence, if
\begin{equation}\label{contr-cond}
2 c_{s, \alpha} \varrho^{\alpha-1} T^{\frac{3-s-\alpha(1-s)}{2}} < 1
\end{equation}
then the map $u \mapsto S\big[u_0, g_0, h_0; \pm |u|^{\alpha-1} u\big]$ is a contraction on $\mathcal B(0, \varrho)$. In turn, by the Banach fixed point theorem, this map has a unique fixed point in $\mathcal B(0, \varrho)$. Equivalently, if \eqref{contr-cond} is satisfied, then the integral equation $u = S\big[u_0, g_0, h_0; \pm |u|^{\alpha-1} u\big]$ has a unique solution in $\mathcal B(0, \varrho)$, which provides the unique solution to the NLS initial-boundary value problem \eqref{qnls-ibvp}. Like for the ``into'' condition \eqref{into-cond}, we emphasize that for $2\leq \alpha < \frac{3-s}{1-s}$ the ``contraction'' condition \eqref{contr-cond} can be fulfilled by a suitable $T>0$ without any additional restriction on the initial data due to the fact that the relevant exponent of $T$ is positive, $\varrho$ is bounded as $T\to 0^+$, and $\alpha-1>0$. On the other hand, for $\alpha = \frac{3-s}{1-s}$ the condition \eqref{contr-cond} requires a sufficiently small norm for the initial data in order to make $\varrho$ sufficiently small. 

The above contraction mapping argument establishes existence and uniqueness of solution inside the ball $\mathcal B(0, \varrho) \subseteq \mathcal S$. However, following the method of Proposition 4.2 in \cite{cw1990} (see also Section 3.3 in \cite{amo2024}), uniqueness can actually be extended to the whole of the solution space $\mathcal S$. Finally, the Lipschitz continuity of the data-to-solution map follows by arguments similar to those used for the contraction above, e.g. see \cite{fhm2017,hm2020}. The proof of Theorem \ref{lwp-t} is complete.

\vspace*{3mm}
\noindent
\textit{Proof of Proposition \ref{qnls-non-p}.} 
As noted earlier, the proof is standard in the NLS literature. We begin with the following crucial result.
\begin{proposition}\label{prod-est-low-p}
Let $D^s$ be defined by $\mathcal F\{D^s \varphi\}(k) = |k|^s \mathcal F\{\varphi\}(k)$. 
Then, for all $0<s<1$ and $\alpha\geq 1$, 
\begin{equation}\label{prod-est-low}
\left\| D^s (|\varphi|^{\alpha-1}\varphi) \right\|_{L^2(\mathbb R^2)}
\lesssim
\left\| \varphi \right\|_{H^{s, p}(\mathbb R^2)}^\alpha
\end{equation}
where $p = \frac{2 \alpha}{1+(\alpha-1)s}$. 
\end{proposition}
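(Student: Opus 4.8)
The plan is to reduce the estimate \eqref{prod-est-low} to the fractional Leibniz (Kato--Ponce) rule combined with the fractional chain rule, both of which are classical tools in the NLS literature (see, e.g., \cite{kpv1993,c2003}). Writing $F(z) = |z|^{\alpha-1} z$ for $z \in \mathbb C$, we have $F \in C^1$ with $|F'(z)| \lesssim |z|^{\alpha-1}$, so the fractional chain rule (valid for $0 < s < 1$) gives
\begin{equation*}
\left\| D^s(|\varphi|^{\alpha-1}\varphi) \right\|_{L^2(\mathbb R^2)}
=
\left\| D^s (F\circ\varphi) \right\|_{L^2(\mathbb R^2)}
\lesssim
\left\| \, |\varphi|^{\alpha-1} \, \right\|_{L^{p_1}(\mathbb R^2)} \left\| D^s \varphi \right\|_{L^{p_2}(\mathbb R^2)},
\end{equation*}
where $\frac{1}{2} = \frac{1}{p_1} + \frac{1}{p_2}$ and $1 < p_2 < \infty$. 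The first factor equals $\left\| \varphi \right\|_{L^{p_1(\alpha-1)}(\mathbb R^2)}^{\alpha-1}$.

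Next I would choose the exponents so that both factors are controlled by $\left\| \varphi \right\|_{H^{s,p}(\mathbb R^2)}$ with $p = \frac{2\alpha}{1 + (\alpha-1)s}$. The natural choice is $p_2 = p$, so that $\left\| D^s\varphi \right\|_{L^{p_2}} \lesssim \left\| \varphi \right\|_{H^{s,p}}$ directly (since $\left\| D^s\varphi \right\|_{L^p} \le \left\| \varphi \right\|_{H^{s,p}}$ by the definition of the Bessel potential norm). This forces $\frac{1}{p_1} = \frac{1}{2} - \frac{1}{p} = \frac{1}{2} - \frac{1+(\alpha-1)s}{2\alpha} = \frac{\alpha - 1 - (\alpha-1)s}{2\alpha} = \frac{(\alpha-1)(1-s)}{2\alpha}$, hence $p_1(\alpha-1) = \frac{2\alpha}{1-s}$. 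It remains to verify the Sobolev embedding $H^{s,p}(\mathbb R^2) \hookrightarrow L^{p_1(\alpha-1)}(\mathbb R^2)$, i.e. $H^{s,p}(\mathbb R^2) \hookrightarrow L^{\frac{2\alpha}{1-s}}(\mathbb R^2)$. The Sobolev embedding $H^{s,p}(\mathbb R^2)\hookrightarrow L^r(\mathbb R^2)$ holds (in dimension $2$) for $\frac{1}{r} = \frac{1}{p} - \frac{s}{2}$ when $0 < s < \frac{2}{p}$, and one computes $\frac{1}{p} - \frac{s}{2} = \frac{1+(\alpha-1)s}{2\alpha} - \frac{s}{2} = \frac{1 + (\alpha-1)s - \alpha s}{2\alpha} = \frac{1-s}{2\alpha}$, which is exactly $\frac{1-s}{2\alpha} = \frac{1}{p_1(\alpha-1)}$. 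So the embedding exponents match precisely, and one should check the side condition $s < \frac{2}{p}$, which reads $sp < 2$, i.e. $\frac{2s\alpha}{1+(\alpha-1)s} < 2$, i.e. $s\alpha < 1 + (\alpha-1)s$, i.e. $s < 1$ — automatically satisfied in our range. (Also $p > 1$ should be checked, which amounts to $2\alpha > 1 + (\alpha-1)s$, clearly true for $\alpha \ge 1$, $s < 1$.)

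Putting these pieces together yields
\begin{equation*}
\left\| D^s(|\varphi|^{\alpha-1}\varphi) \right\|_{L^2(\mathbb R^2)}
\lesssim
\left\| \varphi \right\|_{L^{\frac{2\alpha}{1-s}}(\mathbb R^2)}^{\alpha-1} \left\| D^s\varphi \right\|_{L^p(\mathbb R^2)}
\lesssim
\left\| \varphi \right\|_{H^{s,p}(\mathbb R^2)}^{\alpha-1} \left\| \varphi \right\|_{H^{s,p}(\mathbb R^2)}
=
\left\| \varphi \right\|_{H^{s,p}(\mathbb R^2)}^\alpha,
\end{equation*}
which is \eqref{prod-est-low}. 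I expect the main (though still routine) obstacle to be the careful bookkeeping of Lebesgue exponents and the verification that the fractional chain rule applies with constant depending only on $s$ and $\alpha$ — in particular that $F(z)=|z|^{\alpha-1}z$ has the requisite smoothness for the chain rule when $\alpha \ge 2$ (it is $C^1$ with Hölder-type control on $F'$, which suffices for the standard statements of the fractional chain rule), and that one stays within the admissible range $1 < p_1, p_2 < \infty$ throughout. No genuinely new idea is needed beyond assembling these classical estimates; the nontrivial content is that the specific exponent $p = \frac{2\alpha}{1+(\alpha-1)s}$ is exactly the one that makes the chain-rule exponent and the Sobolev-embedding exponent coincide.
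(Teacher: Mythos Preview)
Your argument is correct and essentially identical to the paper's: both apply the fractional chain rule to $F(\varphi)=|\varphi|^{\alpha-1}\varphi$ with $r_2=p$ and then control the remaining $L^{\frac{2\alpha}{1-s}}$ factor by the $H^{s,p}$ norm---you via the Sobolev embedding $H^{s,p}(\mathbb R^2)\hookrightarrow L^{\frac{2\alpha}{1-s}}(\mathbb R^2)$, the paper via the fractional Gagliardo--Nirenberg inequality with $\theta=1$, which is the same embedding in homogeneous form. Two minor remarks: you announce the Kato--Ponce product rule but never use it (only the chain rule is needed), and your aside about $\alpha\ge 2$ is overly cautious---$F\in C^1$ already for $\alpha\ge 1$, which is all the chain rule requires.
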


\begin{proof}
The proof of Proposition \ref{prod-est-low-p} relies on fractional versions of the chain rule  and the Sobolev-Gagliardo-Nirenberg inequality. A chain rule for fractional derivatives that can be found in Proposition 3.1 of \cite{cw1991} (see also Lemma 2.3 in  \cite{kvz2008} and Proposition 5.1 in \cite{t2000}) is the following.
\begin{lemma}[Chain rule for fractional derivatives]
\label{frac-chain-l}
Suppose that $F\in C^1(\mathbb C)$, $0<s < 1$ and $1< r, r_1, r_2<\infty$ such that $\frac 1r = \frac{1}{r_1}+\frac{1}{r_2}$. Then, 
\begin{equation}\label{2d-frac-chain}
\left\| D^s F(\varphi) \right\|_{L^r(\mathbb R^2)} \lesssim \left\| F'(\varphi) \right\|_{L^{r_1}(\mathbb R^2)} \left\| D^s \varphi \right\|_{L^{r_2}(\mathbb R^2)}
\end{equation}
where $F'(\varphi)$ denotes the Jacobian matrix of $F(\varphi)$ with respect to $\textnormal{Re}(\varphi)$ and $\textnormal{Im}(\varphi)$.
\end{lemma}
We note that, although the proof of Lemma \ref{frac-chain-l} given in \cite{cw1991} is in one dimension, it is noted that the result remains valid in higher dimensions (see also its statement in \cite{kvz2008}). 
A generalization of the classical Sobolev-Gagliardo-Nirenberg inequality (Theorem 1.3.7 in \cite{c2003})  to fractional derivatives is provided by Corollary 2.4 of~\cite{hmow2011} and reads as follows. 
\begin{lemma}[Fractional Gagliardo-Nirenberg inequality]
\label{frac-sgn-l}
Let $1<r, r_0, r_1<\infty$, $\sigma, \sigma_1\in\mathbb R$ and $0\leq \theta\leq 1$. Then, the fractional Gagliardo-Nirenberg inequality of the type
\begin{equation}
\left\| \varphi \right\|_{\dot H^{\sigma, r}(\mathbb R^2)} \lesssim \left\| \varphi \right\|_{L^{r_0}(\mathbb R^2)}^{1-\theta} \left\| \varphi \right\|_{\dot H^{\sigma_1, r_1}(\mathbb R^2)}^\theta
\end{equation}
holds if and only if  $\frac 2r - \sigma = \left(1-\theta\right) \frac{2}{r_0} + \theta  \big(\frac{2}{r_1} - \sigma_1\big)$, $\sigma \leq \theta \sigma_1$.
\end{lemma}
In the above lemma, $\dot H^{s, p}(\mathbb R^2)$ denotes the Riesz potential space defined by
\begin{equation}\label{hsp-dot-def}
\dot H^{s, p}(\mathbb R^2) 
:=
\left\{\varphi \in L^r(\mathbb R^2): 
\left\| \varphi \right\|_{\dot H^{s, p}(\mathbb R^2)} := \left\| D^s\varphi \right\|_{L^p(\mathbb R^2)} =  \left\| \mathcal F^{-1}\left\{ |k|^s \mathcal F\{\varphi\}(k)\right\} \right\|_{L^p(\mathbb R^2)} < \infty
\right\}.
\end{equation}
This is the nonhomogeneous analogue of the Bessel potential space $H^{s, p}(\mathbb R^2)$ and for $p=2$ becomes the homogeneous   Sobolev space $\dot H^s(\mathbb R^2)$.
We now proceed to the proof of the nonlinear estimate \eqref{prod-est-low}. Letting  $F(\varphi) = |\varphi|^{\alpha-1} \varphi$ and writing $\varphi =  \text{Re}(\varphi) + i  \text{Im}(\varphi)$, we find that the Jacobian matrix $F'(\varphi)$ is continuous if and only if $\alpha\geq 1$. Hence, using Lemma \ref{frac-chain-l} with $r=2$ and $r_2=p$, 
we have
$$
\left\| D^s F(\varphi) \right\|_{L^2(\mathbb R^2)} \lesssim \left\| F'(\varphi) \right\|_{L^{\frac{2p}{p-2}}(\mathbb R^2)} \left\| D^s \varphi \right\|_{L^p(\mathbb R^2)}
\lesssim \left\| \varphi  \right\|_{L^{\frac{2p(\alpha-1)}{p-2}}(\mathbb R^2)}^{\alpha-1} \left\| D^s \varphi \right\|_{L^p(\mathbb R^2)}.
$$
In turn, employing Lemma \ref{frac-sgn-l} with $\sigma=0$, $\sigma_1=s$, $r=\frac{2p(\alpha-1)}{p-2}$, $r_1=p$ and $\theta=1$, which impose the condition
\begin{equation}\label{p-cond}
p  = \frac{2 \alpha}{1+(\alpha-1)s},
\end{equation}
(the above choices are compatible with the requirements $1<r, r_1 < \infty$ and $\sigma \leq \theta \sigma_1$) we obtain
$$
\left\| D^s F(\varphi) \right\|_{L^2(\mathbb R^2)} 
\lesssim 
\left\| \varphi  \right\|_{\dot H^{s, p}(\mathbb R^2)}^\alpha
\leq
\left\| \varphi  \right\|_{H^{s, p}(\mathbb R^2)}^\alpha
$$
with $0<s<1$ due to Lemma \ref{frac-chain-l}. 
\end{proof}

Having proved Proposition \ref{prod-est-low-p}, we proceed to the proof of the first half of Proposition \ref{qnls-non-p}, namely of the nonlinear estimate \eqref{non-est}. Restoring dependence on $t$ in \eqref{prod-est-low}, 
taking $L_t^1(0, T)$ norms and using H\"older's inequality in $t$ with $r\geq 1$ to be determined, we have
\begin{equation}\label{frac-chain-F6}
\left\| D^s (|\varphi|^{\alpha-1}\varphi) \right\|_{L_t^1((0, T); L_x^2(\mathbb R^2))} 
\lesssim 
T^{\frac{1}{r'}}
\left\| \varphi \right\|_{L_t^{\alpha r}((0, T); H_x^{s, p}(\mathbb R^2))}^\alpha.
\end{equation}
As our goal is to have the norm of the Strichartz space $L_t^qL_x^p$ appear on the right-hand side, we set $r=\tfrac{q}{\alpha}$ which, in view of \eqref{p-cond} and the admissibility conditions~\eqref{adm-cond}, implies
\begin{equation}\label{q-cond}
q = \frac{2 \alpha}{(1-s)(\alpha-1)}.
\end{equation}
Note that our choice of $r$ requires $q \geq \alpha$ or, equivalently,
\begin{equation}\label{alpha-cond}
\alpha  \leq \frac{3-s}{1-s}.
\end{equation}
Moreover, note that the requirement $2 \leq p < \infty$ imposed by \eqref{adm-cond} is satisfied since $0<s<1$ and  $(\alpha-1)s\geq 0$.
In turn, \eqref{frac-chain-F6} yields
\begin{equation}\label{ds-est}
\left\| D^s (|\varphi|^{\alpha-1}\varphi) \right\|_{L_t^1((0, T); L_x^2(\mathbb R^2))} 
\lesssim 
T^{\frac{q-\alpha}{q}} \left\| \varphi \right\|_{L_t^q((0, T); H_x^{s, p}(\mathbb R^2))}^\alpha.
\end{equation}

In the case $s=0$, which is not covered by the above arguments, we use H\"older's inequality in $x$ to infer
$$
\left\| |\varphi|^{\alpha-1}\varphi(t)  \right\|_{L_x^2(\mathbb R^2)}^2 
\leq
\left\| \varphi(t) \right\|_{L_x^p(\mathbb R^2)}^{2\alpha}
$$
provided that  $p=2\alpha$, which is in agreement with condition \eqref{p-cond} when $s=0$. Hence, by H\"older's inequality in $t$, 
$$
\left\| |\varphi|^{\alpha-1}\varphi  \right\|_{L_t^1((0, T); L_x^2(\mathbb R^2))}
\leq
T^{\frac{1}{r'}} \left(\int_0^T \left\| \varphi(t) \right\|_{L_x^p(\mathbb R^2)}^{\alpha r} dt\right)^{\frac{1}{r}}, \quad r\geq 1,
$$
and setting $r=\frac{q}{\alpha}$, which in view of \eqref{adm-cond} implies $q=\tfrac{2\alpha}{\alpha-1}$ (consistently with~\eqref{q-cond}) and requires that $q\geq \alpha$ or, equivalently, $\alpha \leq  3$ (consistently with \eqref{alpha-cond}), we have
\begin{equation}\label{non-l2-est}
\left\| |\varphi|^{\alpha-1}\varphi  \right\|_{L_t^1((0, T); L_x^2(\mathbb R^2))} 
\lesssim 
T^{\frac{q-\alpha}{q}} \left\| \varphi \right\|_{L_t^q((0, T); L_x^p(\mathbb R^2))}^\alpha.
\end{equation}
Note that the admissibility condition $2\leq p <\infty$ is met since  $1 \leq \alpha < \infty$.
Overall, estimates \eqref{ds-est} and \eqref{non-l2-est}  imply the desired estimate \eqref{non-est} of Proposition \ref{qnls-non-p}. We remark that the restriction $\alpha\geq 2$ actually comes from the proof of the difference estimate \eqref{non-diff-est} below.

Next, we establish the second half of Proposition \ref{qnls-non-p}, namely estimate \eqref{non-diff-est} for the difference of nonlinearities. We begin with the following well-known result (for a proof, see for example \cite{hkmms2024}).
\begin{lemma}\label{mvt-l}
For any pair of complex numbers $z, z'$,
\begin{equation}
|z|^{\alpha-1} z - |z'|^{\alpha-1} z' 
=
\frac{\alpha+1}{2} \left(\int_0^1  |Z_\lambda|^{\alpha-1} d\lambda\right) \left(z-z'\right)
+
\frac{\alpha-1}{2} \left(\int_0^1 |Z_\lambda|^{\alpha-3}
Z_\lambda^2 \, d\lambda\right)  \overline{\left(z-z'\right)},
\end{equation}
where $Z_\lambda := \lambda z + \left(1-\lambda\right) z'$, $\lambda\in [0, 1]$.
\end{lemma}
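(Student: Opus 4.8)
The plan is to obtain the identity as the fundamental theorem of calculus applied along the segment joining $z'$ to $z$. Set $F(w):=|w|^{\alpha-1}w$, so that the left-hand side equals $F(z)-F(z')$, and note that $Z_0=z'$, $Z_1=z$, with $\frac{dZ_\lambda}{d\lambda}=z-z'$. Once it is justified that $\lambda\mapsto F(Z_\lambda)$ is regular enough to write $F(z)-F(z')=\int_0^1\frac{d}{d\lambda}F(Z_\lambda)\,d\lambda$, the whole statement reduces to computing the integrand $\frac{d}{d\lambda}F(Z_\lambda)$ and recognizing its two terms.

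For the computation I would use the Wirtinger calculus. Writing $F(w)=|w|^{\alpha-1}w=w^{\frac{\alpha+1}{2}}\,\bar w^{\frac{\alpha-1}{2}}$ (equivalently, differentiating $F$ directly with respect to $\operatorname{Re}w$ and $\operatorname{Im}w$), one computes, for $w\neq 0$,
\[
\partial_w F(w)=\frac{\alpha+1}{2}\,|w|^{\alpha-1},\qquad \partial_{\bar w}F(w)=\frac{\alpha-1}{2}\,|w|^{\alpha-3}w^2.
\]
The real-variable chain rule then gives $\frac{d}{d\lambda}F(Z_\lambda)=\partial_wF(Z_\lambda)\,(z-z')+\partial_{\bar w}F(Z_\lambda)\,\overline{(z-z')}$, that is,
\[
\frac{d}{d\lambda}F(Z_\lambda)=\frac{\alpha+1}{2}\,|Z_\lambda|^{\alpha-1}(z-z')+\frac{\alpha-1}{2}\,|Z_\lambda|^{\alpha-3}Z_\lambda^2\,\overline{(z-z')},
\]
and integrating over $\lambda\in[0,1]$ produces exactly the claimed formula, with the coefficients $\int_0^1|Z_\lambda|^{\alpha-1}\,d\lambda$ and $\int_0^1|Z_\lambda|^{\alpha-3}Z_\lambda^2\,d\lambda$.

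The only delicate point --- and the main, though mild, obstacle --- is the failure of $F$ to be smooth at the origin: if the segment from $z'$ to $z$ passes through $0$ then $Z_{\lambda_0}=0$ for some $\lambda_0\in[0,1]$, where $\partial_{\bar w}F$ is undefined and $|Z_\lambda|^{\alpha-3}$ may be unbounded when $\alpha<3$. I would resolve this by noting that, since $\alpha\geq 1$, the map $F$ is locally Lipschitz on $\mathbb C$ (its gradient is $O(|w|^{\alpha-1})$ near the origin), hence $\lambda\mapsto F(Z_\lambda)$ is Lipschitz and therefore absolutely continuous on $[0,1]$; since $\{\lambda:Z_\lambda=0\}$ contains at most one point, the Wirtinger expression for $\frac{d}{d\lambda}F(Z_\lambda)$ holds for a.e. $\lambda$, and the fundamental theorem of calculus for absolutely continuous functions applies. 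Both integrands are moreover bounded in modulus by $|Z_\lambda|^{\alpha-1}$, which is continuous on $[0,1]$, so the integrals on the right-hand side are well defined. Alternatively, one may first establish the identity for pairs $z,z'$ whose connecting segment avoids $0$ and then pass to the limit, using the continuity of $F$ and dominated convergence. This completes the plan.
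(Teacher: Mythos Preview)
Your proof is correct and is precisely the standard argument: the paper itself does not supply a proof of this lemma but merely cites an external reference (``for a proof, see for example \cite{hkmms2024}''), and the computation via the Wirtinger derivatives together with the fundamental theorem of calculus along the segment $\lambda\mapsto Z_\lambda$ is exactly what underlies that result. Your treatment of the singularity at the origin is sound, though in fact slightly more caution than necessary: for $\alpha\geq 1$ both $\partial_wF$ and $\partial_{\bar w}F$ extend continuously to $w=0$ (since $|\partial_{\bar w}F(w)|=\tfrac{\alpha-1}{2}|w|^{\alpha-1}\to 0$), so $F$ is $C^1$ on all of $\mathbb C$ and no limiting argument is needed.
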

Employing Lemma \ref{mvt-l} with $z=\varphi$, $z'=\psi$ while suppressing for now the $t$ dependence, we get
\begin{align}
&\quad
\left\| D^s (|\varphi|^{\alpha-1} \varphi - |\psi|^{\alpha-1} \psi)  \right\|_{L_x^2(\mathbb R^2)}
\nn\\
&\lesssim
\sup_{\lambda\in [0, 1]} \left\| D^s \left[  |Z_\lambda|^{\alpha-1} \left(\varphi - \psi\right)\right]  \right\|_{L_x^2(\mathbb R^2)}
+
\sup_{\lambda\in [0, 1]}  \left\|  D^s\left[ |Z_\lambda|^{\alpha-3}
Z_\lambda^2 \,  \overline{\left(\varphi - \psi\right)}\right]  \right\|_{L_x^2(\mathbb R^2)}.
\label{2d-diff-temp5}
\end{align}
Proposition 3.3 in \cite{cw1991} (which is stated for $n=1$ but holds for all $n\in\mathbb N$ --- see also \cite{kp1988,bmmn2014}) provides the following Leibniz rule for fractional derivatives.
\begin{lemma}[Fractional Leibniz rule]
\label{prod-l}
Let $0<s<1$ and $1< r, r_1, r_2, \rho_1, \rho_2<\infty$ such that $\frac 1r = \frac{1}{r_1} + \frac{1}{\rho_1} = \frac{1}{r_2} + \frac{1}{\rho_2}$. Then, 
\begin{equation}
\left\| D^s\left(fg\right)\right\|_{L^r(\mathbb R^2)}
\lesssim
\left\| f \right\|_{L^{r_1}(\mathbb R^2)}
\left\| D^s g \right\|_{L^{\rho_1}(\mathbb R^2)}
+
\left\| D^s f \right\|_{L^{r_2}(\mathbb R^2)}
\left\| g \right\|_{L^{\rho_2}(\mathbb R^2)}.
\end{equation}
\end{lemma}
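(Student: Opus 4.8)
The inequality of Lemma~\ref{prod-l} is the fractional Leibniz (Kato--Ponce) estimate, and the plan is to prove it via a Littlewood--Paley paraproduct decomposition (an alternative would be to realize $D^s(fg)$ as the bilinear Fourier multiplier with symbol $|\xi+\eta|^s$ and split it into Coifman--Meyer symbols, but the paraproduct argument is more self-contained). Let $\{\Delta_j\}_{j\in\mathbb Z}$ be the homogeneous Littlewood--Paley projections associated with a dyadic partition of unity, so that $\widehat{\Delta_j h}$ is supported in $\{|\xi|\simeq 2^j\}$, and set $S_j:=\sum_{k\le j-2}\Delta_k$. First I would write $fg=\pi_1(f,g)+\pi_2(f,g)+\pi_3(f,g)$ with $\pi_1(f,g)=\sum_j(\Delta_j f)(S_j g)$, $\pi_2(f,g)=\sum_j(S_j f)(\Delta_j g)$ and $\pi_3(f,g)=\sum_{|j-k|\le 2}(\Delta_j f)(\Delta_k g)$, and estimate $D^s$ of each piece. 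For the ``unbalanced'' piece $\pi_1$, the summand $(\Delta_j f)(S_j g)$ has Fourier support in $\{|\xi|\simeq 2^j\}$, so $D^s$ acts there as multiplication by $2^{sj}$ times a zero-order symbol; invoking the square-function characterization of $L^r$ (valid since $1<r<\infty$), the pointwise domination $|S_j g|\lesssim \mathcal M g$ by the Hardy--Littlewood maximal function, H\"older's inequality with $\tfrac1r=\tfrac1{r_2}+\tfrac1{\rho_2}$, the Littlewood--Paley characterization of $\dot H^{s,r_2}(\mathbb R^2)$ and the scalar maximal inequality (both valid since $1<r_2,\rho_2<\infty$), one obtains $\|D^s\pi_1(f,g)\|_{L^r(\mathbb R^2)}\lesssim\|D^s f\|_{L^{r_2}(\mathbb R^2)}\|g\|_{L^{\rho_2}(\mathbb R^2)}$; symmetrically $\|D^s\pi_2(f,g)\|_{L^r(\mathbb R^2)}\lesssim\|f\|_{L^{r_1}(\mathbb R^2)}\|D^s g\|_{L^{\rho_1}(\mathbb R^2)}$.

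The remaining piece $\pi_3$ is the delicate one and the main obstacle. Now the summand $(\Delta_j f)(\Delta_k g)$ with $|j-k|\le 2$ is Fourier supported merely in $\{|\xi|\lesssim 2^j\}$, so $D^s$ no longer freezes a single frequency and Bernstein's inequality must be used to reinsert the derivative: summing the output over dyadic scales $2^\ell\lesssim 2^j$ produces a factor $\sum_{\ell\le j}2^{s\ell}\lesssim 2^{sj}$, where the convergence of this geometric series is exactly the place where $s>0$ is used (the restriction $0<s<1$ is harmless and matches the form of the result quoted from \cite{cw1991,kp1988,bmmn2014}). This yields $\|D^s[(\Delta_j f)(\widetilde\Delta_j g)]\|_{L^r(\mathbb R^2)}\lesssim 2^{sj}\|\Delta_j f\|_{L^{r_2}(\mathbb R^2)}\|\widetilde\Delta_j g\|_{L^{\rho_2}(\mathbb R^2)}$ after H\"older; putting the weight $2^{sj}$ next to $\Delta_j f$, summing in $j$ by Cauchy--Schwarz and then H\"older in $x$, and recognizing $\big\|(\sum_j 2^{2sj}|\Delta_j f|^2)^{1/2}\big\|_{L^{r_2}(\mathbb R^2)}\simeq\|D^s f\|_{L^{r_2}(\mathbb R^2)}$ and $\big\|(\sum_j|\widetilde\Delta_j g|^2)^{1/2}\big\|_{L^{\rho_2}(\mathbb R^2)}\simeq\|g\|_{L^{\rho_2}(\mathbb R^2)}$, one bounds $\|D^s\pi_3(f,g)\|_{L^r(\mathbb R^2)}$ by $\|D^sf\|_{L^{r_2}(\mathbb R^2)}\|g\|_{L^{\rho_2}(\mathbb R^2)}$ (or, distributing the weight onto $g$ instead, by $\|f\|_{L^{r_1}(\mathbb R^2)}\|D^sg\|_{L^{\rho_1}(\mathbb R^2)}$).

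Adding the three contributions gives the asserted estimate. The genuine difficulties are the bookkeeping of exponents --- every appeal to a square function, to the (scalar or vector-valued) maximal inequality, or to the Littlewood--Paley characterization of a homogeneous potential space forces the exponent in question to lie strictly between $1$ and $\infty$, which is precisely why the endpoints are excluded in the hypotheses --- and the balanced paraproduct $\pi_3$, where the product lives at low frequency and one must pay a Bernstein factor and rely on $s>0$ for summability. Since this circle of ideas is classical, in the paper one may simply cite \cite{cw1991,kp1988,bmmn2014} rather than reproduce the argument.
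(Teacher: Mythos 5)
The paper does not prove this lemma at all: it is quoted verbatim from the literature, namely Proposition 3.3 of \cite{cw1991} (stated there for $n=1$ but valid in all dimensions, as the authors note), together with \cite{kp1988,bmmn2014}. Your paraproduct sketch is, in essence, the standard argument that underlies those references, and it is sound: the Bony decomposition into $\pi_1,\pi_2,\pi_3$, the observation that $D^s$ localizes to a single dyadic scale on the two unbalanced paraproducts, and the Bernstein/geometric-series argument on the diagonal piece (where $s>0$ enters) are exactly right, and you correctly identify that every square-function and maximal-function step is where the open-range hypotheses $1<r,r_i,\rho_i<\infty$ are consumed. Two points are glossed over and would need care in a full write-up: (i) in $\pi_1$ the operator $D^s$ acting on $(\Delta_j f)(S_jg)$ is $2^{sj}$ times a \emph{$j$-dependent} zero-order convolution, so passing to the square function requires the Fefferman--Stein vector-valued maximal inequality, not only the scalar one you invoke; (ii) in $\pi_3$ the Cauchy--Schwarz in $j$ must be performed pointwise in $x$ \emph{inside} the $L^r$ norm before H\"older (summing norms first and then applying Cauchy--Schwarz at the level of $\ell^2_jL^{r_2}_x$ would require $r_2\ge 2$), which again forces a maximal-function domination of the $D^s$ output. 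Neither issue is a genuine gap --- both are handled by the classical machinery --- and your concluding remark that one may simply cite the references is precisely what the paper does.
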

For the first term on the right-hand side of \eqref{2d-diff-temp5}, Lemma \ref{prod-l} with $f = |Z_\lambda|^{\alpha-1}$, $g=\varphi-\psi$, $r=2$, $\rho_1=p$ (which imply $r_1=\frac{2p}{p-2}$)  and $0<s<1$ yields
\begin{align}
\left\| D^s \left[  |Z_\lambda|^{\alpha-1} \left(\varphi - \psi\right)\right]  \right\|_{L_x^2(\mathbb R^2)} 
&\lesssim
\left\| |Z_\lambda|^{\alpha-1}\right\|_{L_x^{\frac{2p}{p-2}}(\mathbb R^2)}
\left\| D^s \left(\varphi - \psi\right) \right\|_{L_x^p(\mathbb R^2)} 
\nn\\
&\quad
+
\left\| D^s |Z_\lambda|^{\alpha-1} \right\|_{L_x^{r_2}(\mathbb R^2)} \left\| \varphi - \psi \right\|_{L_x^{\rho_2}(\mathbb R^2)}.
\label{diff-temp1}
\end{align}
In order to bound the $L^{\rho_2}$ norm by the $H^{s, p}$ norm on the right-hand side of \eqref{diff-temp1}, we use the fractional Gagliardo-Nirenberg inequality of Lemma \ref{frac-sgn-l} with $\sigma=0$, $r=\rho_2$, $\theta=1$, $\sigma_1=s$ and $r_1 = p$. In view of \eqref{p-cond}, this choice forces
$\rho_2 = \frac{2\alpha}{1-s}$
which via the condition $\frac 12 = \frac{1}{r_2} + \frac{1}{\rho_2}$ of Lemma \ref{prod-l}  implies  $r_2 = \frac{2\alpha}{\alpha-1 + s}$.
Note that the hypotheses of Lemmas  \ref{frac-sgn-l} and \ref{prod-l} are met since  $\alpha>1$ and $0<s<1$. With the above restrictions, Lemma \ref{frac-sgn-l} yields
\begin{equation}\label{embed}
 \left\| \varphi - \psi \right\|_{L_x^{\rho_2}(\mathbb R^2)}
 =
  \left\| \varphi - \psi \right\|_{L_x^{\frac{2\alpha}{1-s}}(\mathbb R^2)}
 \lesssim
  \left\| \varphi - \psi \right\|_{\dot H_x^{s, p} (\mathbb R^2)}
\end{equation}
so in view of \eqref{hsp-dot-def} inequality \eqref{diff-temp1} becomes
\begin{equation}\label{diff-temp2}
\left\| D^s \left[  |Z_\lambda|^{\alpha-1} (\varphi - \psi)\right]  \right\|_{L_x^2(\mathbb R^2)} 
\lesssim
\Big(
\left\| |Z_\lambda|^{\alpha-1}\right\|_{L_x^{\frac{2\alpha}{(\alpha-1)(1-s)}}(\mathbb R^2)}
+
\left\| D^s |Z_\lambda|^{\alpha-1} \right\|_{L_x^{\frac{2\alpha}{\alpha-1 + s}}(\mathbb R^2)}
\Big)
\left\| \varphi - \psi \right\|_{H_x^{s, p}(\mathbb R^2)}.
\end{equation}
By the triangle inequality, the fact that $\lambda\in [0, 1]$ and the embedding \eqref{embed}, 
\begin{equation}\label{sav1}
\left\| |Z_\lambda|^{\alpha-1}\right\|_{L_x^{\frac{2\alpha}{(\alpha-1)(1-s)}}(\mathbb R^2)}
\lesssim
\left\| \varphi \right\|_{H_x^{s, p}(\mathbb R^2)}^{\alpha-1}
+
\left\| \psi \right\|_{H_x^{s, p}(\mathbb R^2)}^{\alpha-1}.
\end{equation}
Furthermore,  
$$
\left\| D^s |Z_\lambda|^{\alpha-1} \right\|_{L_x^{\frac{2\alpha}{\alpha-1 + s}}(\mathbb R^2)}
\leq
\left\| D^s |\varphi|^{\alpha-1} \right\|_{L_x^{\frac{2\alpha}{\alpha-1 + s}}(\mathbb R^2)}
+
\left\| D^s |\psi|^{\alpha-1} \right\|_{L_x^{\frac{2\alpha}{\alpha-1 + s}}(\mathbb R^2)}.
$$
Thus, setting  $F(\varphi) = |\varphi|^{\alpha-1}$  and restricting $
\alpha > 2$ in order to have $F\in C^1(\mathbb C)$ ($\alpha=2$ can be handled directly via (13) on page 355 of \cite{rs1996}), we use the fractional chain rule of Lemma \ref{frac-chain-l} with $0<s<1$, $r=\frac{2\alpha}{\alpha-1 + s}$ (which satisfies $1<r<\infty$ by previous restrictions) and $r_2=p$ (which again satisfies $1<r<\infty$ by previous restrictions) to infer, after observing that $\left|F'(\varphi)\right| \simeq |\varphi|^{\alpha-2}$ and using once again the embedding \eqref{embed}, 
$$
\left\| D^s |Z_\lambda|^{\alpha-1} \right\|_{L_x^{\frac{2\alpha}{\alpha-1 + s}}(\mathbb R^2)}
\lesssim
\left\| \varphi \right\|_{H_x^{s, p}(\mathbb R^2)}^{\alpha-1}
+
\left\| \psi \right\|_{H_x^{s, p}(\mathbb R^2)}^{\alpha-1}.
$$
This estimate is combined with \eqref{sav1} and \eqref{diff-temp2} to yield
\begin{equation}\label{diff-temp6}
\left\| D^s \left[  |Z_\lambda|^{\alpha-1} (\varphi - \psi)\right]  \right\|_{L_x^2(\mathbb R^2)} 
\lesssim
\left(
\left\| \varphi \right\|_{H_x^{s, p}(\mathbb R^2)}^{\alpha-1}
+
\left\| \psi \right\|_{H_x^{s, p}(\mathbb R^2)}^{\alpha-1}
\right)
\left\| \varphi - \psi \right\|_{H_x^{s, p}(\mathbb R^2)},
\end{equation}
completing the estimation of the first term on the right-hand side of \eqref{2d-diff-temp5}. 

For the second term on the right-hand side of \eqref{2d-diff-temp5}, we employ Lemma \ref{prod-l} with $f=|Z_\lambda|^{\alpha-3} Z_\lambda^2$, $g=\overline{\varphi-\psi}$, $r=2$, $\rho_1 = p$ (which give $r_1=\frac{2p}{p-2} = \frac{2\alpha}{(\alpha-1)(1-s)}$) and $0<s<1$ to obtain
\begin{align}
&\quad
\left\| D^s\big[|Z_\lambda|^{\alpha-3} Z_\lambda^2 \overline{(\varphi-\psi)}\big]\right\|_{L_x^2(\mathbb R^2)}
\nn\\
&\lesssim
\left\| |Z_\lambda|^{\alpha-1} \right\|_{L_x^{\frac{2\alpha}{(\alpha-1)(1-s)}}(\mathbb R^2)}
\left\| D^s (\varphi-\psi) \right\|_{L_x^p(\mathbb R^2)}
+
\left\| D^s (|Z_\lambda|^{\alpha-3} Z_\lambda^2) \right\|_{L_x^{r_2}(\mathbb R^2)}
\left\| \varphi-\psi \right\|_{L_x^{\rho_2}(\mathbb R^2)}.
\end{align}
Employing the fractional Gagliardo-Nirenberg inequality of Lemma \ref{frac-sgn-l} with $\sigma=0$, $r=\rho_2$, $\theta=1$, $\sigma_1=s$ and $r_1=p$, we obtain once again the embedding \eqref{embed} and also $r_2 = \frac{2\alpha}{\alpha-1 + s}$. Thus, 
\begin{align}
&\quad
\left\| D^s\big[|Z_\lambda|^{\alpha-3} Z_\lambda^2 \overline{(\varphi-\psi)}\big]\right\|_{L_x^2(\mathbb R^2)}
\nn\\
&\lesssim
\Big(
\left\| |Z_\lambda|^{\alpha-1}  \right\|_{L_x^{\frac{2\alpha}{(\alpha-1)(1-s)}}(\mathbb R^2)}
+
\left\| D^s (|Z_\lambda|^{\alpha-3} Z_\lambda^2) \right\|_{L_x^{\frac{2\alpha}{\alpha-1 + s}}(\mathbb R^2)}
\Big)
\left\| \varphi-\psi \right\|_{H_x^{s, p}(\mathbb R^2)}.
\label{diff-temp7}
\end{align}
The first term in the above parenthesis is the same with the corresponding one in \eqref{diff-temp2} and hence admits the bound \eqref{sav1}. 
The second term can be handled by slightly adapting the proof of Proposition \ref{prod-est-low-p}, in particular, by setting $F(\phi) = |\phi|^{\alpha-3} \phi^2$ with $\phi = Z_\lambda = \lambda \varphi + (1-\lambda) \psi$ and noting that $F\in C^1(\mathbb C)$ if and only if $\alpha>2.$
Then, in view of the embedding \eqref{embed} and the fact that $|F'(\phi)|\lesssim |\phi|^{\alpha-2}$, the chain rule of Lemma \ref{frac-chain-l} with $r=\frac{2\alpha}{\alpha-1+s}$ and $r_2=p$ (which imply $r_1 = \frac{2\alpha}{(1-s)(\alpha-2)}$) yields
\begin{equation}\label{diff-temp8}
\left\| D^s (|Z_\lambda|^{\alpha-3} Z_\lambda^2) \right\|_{L_x^{\frac{2\alpha}{\alpha-1 + s}}(\mathbb R^2)}
\lesssim
\left\|\varphi\right\|_{H_x^{s, p}(\mathbb R^2)}^{\alpha-1}
+
\left\|\psi\right\|_{H_x^{s, p}(\mathbb R^2)}^{\alpha-1}.
\end{equation}
The bound \eqref{diff-temp8} has been established only for $\alpha>2$ since Lemma \ref{frac-chain-l} cannot be employed when $\alpha=2$. However, for $\alpha=2$ we can establish \eqref{diff-temp8} directly. We have
$$
\left\| D^s \left(\frac{Z_\lambda^2}{|Z_\lambda|}\right) \right\|_{L_x^{\frac{4}{1 + s}}(\mathbb R^2)}
\leq
\left\|\frac{Z_\lambda^2}{|Z_\lambda|} \right\|_{H_x^{s, \frac{4}{1 + s}}(\mathbb R^2)}
$$
so noting that $\frac{Z_\lambda^2}{|Z_\lambda|}$ is Lipschitz we may employ (2) on page 41 of \cite{rs1996} (recalling that the Triebel-Lizorkin space $F_{p, 2}^s$ is equal to $H^{s, p}$) to deduce
\begin{equation}\label{diff-temp13}
\left\| D^s \left(\frac{Z_\lambda^2}{|Z_\lambda|}\right) \right\|_{L_x^{\frac{4}{1 + s}}(\mathbb R^2)}
\leq
\left\| Z_\lambda \right\|_{H_x^{s, \frac{4}{1 + s}}(\mathbb R^2)}
\equiv
\left\| Z_\lambda \right\|_{H_x^{s, p}(\mathbb R^2)}
\lesssim
\left\|\varphi\right\|_{H_x^{s, p}(\mathbb R^2)} + \left\|\psi\right\|_{H_x^{s, p}(\mathbb R^2)}.
\end{equation}

Overall, inequalities \eqref{sav1}, \eqref{diff-temp7}, \eqref{diff-temp8} and \eqref{diff-temp13}  imply
$$
\left\| D^s\left[|Z_\lambda|^{\alpha-3} Z_\lambda^2 \overline{(\varphi-\psi)}\right]\right\|_{L_x^2(\mathbb R^2)}
\lesssim
\left(
\left\| \varphi \right\|_{H_x^{s, p}(\mathbb R^2)}^{\alpha-1}
+
\left\| \psi \right\|_{H_x^{s, p}(\mathbb R^2)}^{\alpha-1}
\right)
\left\| \varphi - \psi \right\|_{H_x^{s, p}(\mathbb R^2)}
$$
provided that $\alpha \geq 2$. This bound is combined with \eqref{diff-temp6} and \eqref{2d-diff-temp5} to yield  
\begin{equation}\label{diff-temp9}
\left\| D^s (|\varphi|^{\alpha-1} \varphi - |\psi|^{\alpha-1} \psi)  \right\|_{L_x^2(\mathbb R^2)}
\lesssim
\left(
\left\| \varphi \right\|_{H_x^{s, p}(\mathbb R^2)}^{\alpha-1}
+
\left\| \psi \right\|_{H_x^{s, p}(\mathbb R^2)}^{\alpha-1}
\right)
\left\| \varphi - \psi \right\|_{H_x^{s, p}(\mathbb R^2)}
\end{equation}
with $0<s< 1$, $\alpha\geq 2$ and $p = \frac{2\alpha}{1+(\alpha-1)s}$.

\vskip 3mm

Finally, for $s=0$ we recall the following standard result, which assumes the role of Lemma \ref{mvt-l} in this simpler case:
$$
\left| |\varphi|^{\alpha-1}\varphi - |\psi|^{\alpha-1}\psi \right|
\leq 
2^{\alpha+1}\alpha \left(|\varphi|^{\alpha-1}+|\psi|^{\alpha-1}\right) \left|\varphi-\psi\right|,
\quad
\alpha\geq 1.
$$
Combining this inequality with the generalized H\"older inequality for $r=2$, $r_1=\frac{2\alpha}{\alpha-1}$, $r_2 = 2\alpha$, we find
$$
\left\| |\varphi|^{\alpha-1} \varphi - |\psi|^{\alpha-1} \psi
\right\|_{L_x^2(\mathbb R^2)}
\lesssim
\left(
\left\| \varphi \right\|_{L_x^{2\alpha}(\mathbb R^2)}^{\alpha-1}
 + \left\| \psi \right\|_{L_x^{2\alpha}(\mathbb R^2)}^{\alpha-1}
\right) 
\left\| \varphi-\psi \right\|_{L_x^{2\alpha}(\mathbb R^2)},
$$
which agrees with estimate \eqref{diff-temp9} when $s=0$. Hence, both estimates can be summarized as
\begin{equation}\label{diff-hs-est}
\left\| |\varphi|^{\alpha-1} \varphi - |\psi|^{\alpha-1} \psi   \right\|_{H_x^s(\mathbb R^2)}
\lesssim
\left(
\left\| \varphi \right\|_{H_x^{s, p}(\mathbb R^2)}^{\alpha-1}
+
\left\| \psi \right\|_{H_x^{s, p}(\mathbb R^2)}^{\alpha-1}
\right)
\left\| \varphi - \psi \right\|_{H_x^{s, p}(\mathbb R^2)}
\end{equation}
with  $0\leq s<1$, $\alpha \geq 2$, $p = \frac{2\alpha}{1+(\alpha-1)s}$ and the rest of the restrictions that were imposed earlier. 

Taking $L_t^1(0, T)$ norms in \eqref{diff-hs-est} and using H\"older's inequality in $t$ with $\rho = q = \frac{2\alpha}{(1-s)(\alpha-1)}$ (this is a valid choice since $q \geq \alpha$ by \eqref{alpha-cond}), we have
\begin{align}
&\quad
\left\| |\varphi|^{\alpha-1} \varphi - |\psi|^{\alpha-1} \psi   \right\|_{L_t^1((0, T); H_x^s(\mathbb R^2))}
\nn\\
&\lesssim
\left(
\left\| \varphi \right\|_{L_t^{\rho'(\alpha-1)}((0, T); H_x^{s, p}(\mathbb R^2))}^{\alpha-1}
+
\left\| \psi \right\|_{L_t^{\rho'(\alpha-1)}((0, T); H_x^{s, p}(\mathbb R^2))}^{\alpha-1}
\right) 
\left\| \varphi - \psi \right\|_{L_t^q((0, T); H_x^{s, p}(\mathbb R^2))}.
\nn
\end{align}
Applying H\"older's inequality in $t$ once again, this time with $r$ such that $r' \rho'(\alpha-1)=q$ (this is a valid choice because $q  \geq \alpha$), we find
\begin{equation*}
\left\| \varphi \right\|_{L_t^{\rho'(\alpha-1)}((0, T); H_x^{s, p}(\mathbb R^2))}^{\alpha-1}
\leq
T^{\frac{1}{r\rho'}} \left\| \varphi \right\|_{L_t^{r' \rho'(\alpha-1)}((0, T); H_x^{s, p}(\mathbb R^2))}^{\alpha-1}
=
T^{\frac{q - \alpha}{q}}  \left\| \varphi \right\|_{L_t^q((0, T); H_x^{s, p}(\mathbb R^2))}^{\alpha-1}.
\end{equation*}
The last two inequalities yield the desired inequality \eqref{non-diff-est} under the restrictions \eqref{ds-est-cond-t}, completing the proof of Proposition \ref{qnls-non-p}.

\appendix 

\section{Solution of the forced linear Schr\"odinger equation on the spatial quarter-plane}
\label{utm-s}

In this section, we derive the solution formula \eqref{qnls-utm-sol} for the forced linear Schr\"odinger equation on the spatial quarter-plane, namely problem~\eqref{qnls-fls-ibvp}. We recall that this formula motivated the iteration map~\eqref{it-map} for the NLS quarter-plane problem \eqref{qnls-ibvp} and played a key role in the derivation of the linear estimate \eqref{fls-le}, which was used for proving the nonlinear well-posedness result of Theorem \ref{lwp-t}. For our derivation, we employ the unified transform of Fokas \cite{f1997,f2008}, which is a natural analogue of the Fourier transform method in the initial-boundary value problem setting. We assume existence of a solution and data with sufficient smoothness and decay at infinity as necessary for the various calculations to make sense. 

We begin by introducing the quarter-plane Fourier transform pair
\begin{equation}\label{qft-pair}
\begin{aligned}
\what \phi(k_1, k_2) &= \int_{\mathbb R_+} \int_{\mathbb R_+} e^{-ik_1x_1-ik_2x_2} \phi(x_1, x_2) dx_2 dx_1,
\quad \text{Im}(k_1),  \text{Im}(k_2) \leq 0,
\\
\phi(x_1, x_2) &= \frac{1}{(2\pi)^2} \int_{\mathbb R} \int_{\mathbb R} e^{ik_1x_1+ik_2x_2} \what \phi(k_1, k_2) dk_2 dk_1,
\quad (x_1, x_2) \in \mathbb R_+^2,
\end{aligned}
\end{equation}
which can be deduced from the standard two-dimensional Fourier transform pair for the extension of $\phi \in L^2(\mathbb R_+^2)$ by zero outside $\mathbb R_+^2$. In fact, via the same reasoning we additionally infer the following useful fact:
\begin{equation}\label{ft-0}
\int_{\mathbb R} \int_{\mathbb R} e^{ik_1x_1+ik_2x_2} \what \phi(k_1, k_2) dk_2 dk_1 = 0, \quad (x_1, x_2) \in \mathbb R^2 \setminus \overline{\mathbb R_+^2}.
\end{equation}
Taking the quarter-plane transform of the forced linear Schr\"odinger equation while assuming that $u$ and its derivatives decay to zero at infinity, and subsequently integrating with respect to $t$, we obtain an identity known in the unified transform terminology as the global relation:
\begin{equation}\label{qnls-gr}
\begin{aligned}
e^{i\omega t} \what u(k_1,k_2, t)
&=
\what u_0(k_1,k_2)
-\big[i\widetilde h_1(k_2, \omega, t) - k_1\widetilde h_0(k_2, \omega, t)\big]
-\big[i\widetilde  g_1(k_1, \omega, t) - k_2\widetilde g_0(k_1, \omega, t)\big]
\\
&\quad
-i\int_0^t e^{i\omega t'}\what f(k_1, k_2, t')dt',
\quad \text{Im}(k_1) \leq 0, \ \text{Im}(k_2) \leq 0,
\end{aligned}
\end{equation}
where the dispersion relation $\omega$ is given by \eqref{omega-def} and we have introduced the notation
\begin{equation}\label{tilde-def}
\widetilde g_j(k_1, \omega, t)
:=
\int_0^t e^{i\omega t'} \what g_j^{x_1}(k_1, t') dt',
\quad
\widetilde h_j(k_2, \omega, t)
:=
\int_0^t e^{i\omega t'} \what h_j^{x_2}(k_2, t') dt',
\quad
j=0, 1,
\end{equation}
with the half-line Fourier transforms $\what g_j^{x_1}$ and $\what h_j^{x_2}$ defined by \eqref{hl-ft-def}.
Inverting \eqref{qnls-gr} via \eqref{qft-pair}, we obtain the integral representation
\begin{equation}\label{qnls-ir}
\begin{aligned}
u(x_1, x_2, t)
&=
\frac{1}{(2\pi)^2}
\int_{\mathbb R}\int_{\mathbb R}
e^{ik_1x_1+ik_2x_2-i\omega t}
\left[
\what u_0(k_1, k_2) -i\int_0^t e^{i\omega t'}\what f(k_1, k_2, t')dt'
\right]
dk_2dk_1
\\
&\quad 
-\frac{1}{(2\pi)^2}
\int_{\mathbb R}\int_{\mathbb R}
e^{ik_1x_1+ik_2x_2-i\omega t}
\left[i \widetilde h_1(k_2, \omega, t)-k_1\widetilde h_0(k_2, \omega, t)\right] dk_2dk_1
\\
&\quad 
-\frac{1}{(2\pi)^2}
\int_{\mathbb R}\int_{\mathbb R}
e^{ik_1x_1+ik_2x_2-i\omega t}
\big[
i\widetilde  g_1(k_1, \omega, t)-k_2\widetilde  g_0(k_1, \omega, t)
\big] dk_2dk_1,
\end{aligned}
\end{equation}
which is not an explicit solution formula as it contains the unknown boundary values $u_{x_1}(0, x_2, t)$ and $u_{x_2}(x_1, 0, t)$ via the transforms $\widetilde h_1$ and $\widetilde g_1$.  
Nevertheless, these unknown transforms can be eliminated from \eqref{qnls-ir} by employing the identities obtained from the global relation \eqref{qnls-gr} via the transformations $k_1\mapsto -k_1$ and $k_2 \mapsto -k_2$, namely
\begin{align}
e^{i\omega t} \what u(-k_1,k_2, t)
&=
\what u_0(-k_1,k_2)
-\big[i\widetilde h_1(k_2, \omega, t) + k_1\widetilde h_0(k_2, \omega, t)\big]
-\big[i\widetilde  g_1(-k_1, \omega, t) - k_2\widetilde g_0(-k_1, \omega, t)\big]
\nn\\
&\quad
-i\int_0^t e^{i\omega t'}\what f(-k_1, k_2, t')dt',
\quad \text{Im}(k_1) \geq 0, \ \text{Im}(k_2) \leq 0,
\label{qnls-grs1}
\\
e^{i\omega t} \what u(k_1,-k_2, t)
&=
\what u_0(k_1,-k_2)
-\big[i\widetilde h_1(-k_2, \omega, t) - k_1\widetilde h_0(-k_2, \omega, t)\big]
-\big[i\widetilde  g_1(k_1, \omega, t) + k_2\widetilde g_0(k_1, \omega, t)\big]
\nn\\
&\quad
-i\int_0^t e^{i\omega t'}\what f(k_1, -k_2, t')dt',
\quad \text{Im}(k_1) \leq 0, \ \text{Im}(k_2) \geq 0.
\label{qnls-grs2}
\end{align}
The key is that the transformations leading to \eqref{qnls-grs1} and \eqref{qnls-grs2} leave $\omega$ invariant. Hence, making the crucial observation that, in view of \eqref{ft-0}, for any $(x_1, x_2) \in \mathbb R_+^2$  
\begin{equation}\label{qnls-uvan}
\begin{aligned}
&\int_{\mathbb R} \int_{\mathbb R} e^{ik_1x_1+ik_2x_2}\what u(-k_1,k_2, t) dk_2 dk_1
=
\int_{\mathbb R} \int_{\mathbb R} e^{i k_1 (-x_1)+ik_2x_2}\what u(k_1,k_2, t) dk_2 dk_1
= 0,
\\
&\int_{\mathbb R} \int_{\mathbb R} e^{ik_1x_1+ik_2x_2}\what u(k_1,-k_2, t) dk_2 dk_1
=
\int_{\mathbb R} \int_{\mathbb R} e^{i k_1 x_1+ik_2(-x_2)}\what u(k_1,k_2, t) dk_2 dk_1
= 0,
\end{aligned}
\end{equation}
we solve \eqref{qnls-grs1} for $\widetilde  h_1(k_2, \omega, t)$  and \eqref{qnls-grs2} for $\widetilde  g_1(k_1, \omega, t)$ to get 
\begin{equation}\label{qnls-ir2k1}
\begin{aligned}
&\quad
\int_{\mathbb R}\int_{\mathbb R}
e^{ik_1x_1+ik_2x_2-i\omega t}
\left[
i\widetilde  h_1(k_2, \omega, t)-k_1\widetilde  h_0(k_2, \omega, t)
\right]  dk_2dk_1
\\
&=
\int_{\mathbb R}\int_{\mathbb R}
e^{ik_1x_1+ik_2x_2-i\omega t}
\left\{
-\big[i\widetilde  g_1(-k_1, \omega, t)-k_2 \widetilde  g_0(-k_1, \omega, t)\big]
-2k_1\widetilde  h_0(k_2, \omega, t)
\right\}  dk_2dk_1
\\
&\quad
+
\int_{\mathbb R}\int_{\mathbb R}
e^{ik_1x_1+ik_2x_2-i\omega t}
\Big[
\what u_0(-k_1,k_2) - i \int_0^t e^{i\omega t'}\what f(-k_1, k_2, t')dt' 
\Big] dk_2dk_1.
\end{aligned}
\end{equation}
and
\begin{equation}\label{qnls-ir2k2}
\begin{aligned}
&\quad
\int_{\mathbb R}\int_{\mathbb R}
e^{ik_1x_1+ik_2x_2-i\omega t}
\left[
i\widetilde  g_1(k_1, \omega, t)-k_2\widetilde  g_0(k_1, \omega, t)
\right]  dk_2dk_1
\\
&=
\int_{\mathbb R}\int_{\mathbb R}
e^{ik_1x_1+ik_2x_2-i\omega t}
\left\{
-\big[i\widetilde  h_1(-k_2, \omega, t) - k_1 \widetilde  h_0(-k_2, \omega, t)\big]
-
2k_2\widetilde  g_0(k_1, \omega, t)
\right\}  dk_2dk_1
\\
&\quad
+
\int_{\mathbb R}\int_{\mathbb R}
e^{ik_1x_1+ik_2x_2-i\omega t}
\Big[
\what u_0(k_1,-k_2)
-i \int_0^t e^{i\omega t'}\what f(k_1, -k_2, t')dt'
\Big] dk_2dk_1.
\end{aligned}
\end{equation}
Furthermore, employing the additional identity
\begin{align}
e^{i\omega t} \what u(-k_1,-k_2, t)
&=
\what u_0(-k_1,-k_2)
-\big[i\widetilde h_1(-k_2, \omega, t) + k_1\widetilde h_0(-k_2, \omega, t)\big]
-\big[i\widetilde  g_1(-k_1, \omega, t) + k_2\widetilde g_0(-k_1, \omega, t)\big]
\nn\\
&\quad
-i\int_0^t e^{i\omega t'}\what f(-k_1, -k_2, t')dt',
\quad \text{Im}(k_1) \geq 0, \ \text{Im}(k_2) \geq 0,
\label{qnls-grs12}
\end{align}
(which comes from the global relation \eqref{qnls-gr} after applying simultaneously the transformations $k_1 \mapsto -k_1$ and $k_2 \mapsto -k_2$) we infer,  in view of \eqref{ft-0}, 
\begin{equation}\label{qnls-comb}
\begin{aligned}
&\quad
\int_{\mathbb R}\int_{\mathbb R}
e^{ik_1x_1+ik_2x_2-i\omega t}
\big[i\widetilde  h_1(-k_2, \omega, t) + i\widetilde  g_1(-k_1, \omega, t)\big]
dk_2dk_1
\\
&=
\int_{\mathbb R}\int_{\mathbb R}
e^{ik_1x_1+ik_2x_2-i\omega t} 
\Big[
\what u_0(-k_1,-k_2) 
-
i \int_0^t e^{i\omega t'}
\what f(-k_1, -k_2, t')dt' 
\Big] dk_2dk_1
\\
&\quad
-
\int_{\mathbb R}\int_{\mathbb R} e^{ik_1x_1+ik_2x_2-i\omega  t}
\left[
k_1\widetilde  h_0(-k_2, \omega, t)+k_2\widetilde  g_0(-k_1, \omega, t)
\right]  dk_2 dk_1.
\end{aligned}
\end{equation}
Combining  \eqref{qnls-ir2k1}, \eqref{qnls-ir2k2} and \eqref{qnls-comb} into the integral representation \eqref{qnls-ir}, we obtain the unified transform solution formula in the form
\begin{equation}\label{qnls-utm-sol-0}
\begin{aligned}
u(x_1, x_2, t)
&=
\frac{1}{(2\pi)^2}
\int_{\mathbb R}\int_{\mathbb R}
e^{ik_1x_1+ik_2x_2-i\omega t}
\Big[
\what u_0(k_1, k_2)
-i\int_0^t 
e^{i\omega t'}\what f(k_1, k_2, t')dt'
\Big]
dk_2dk_1
\\
&\quad
-\frac{1}{(2\pi)^2}
\int_{\mathbb R}\int_{\mathbb R}
e^{ik_1x_1+ik_2x_2-i\omega t}
\Big[
\what u_0(-k_1,k_2)
-
i \int_0^t e^{i\omega t'}\what f(-k_1, k_2, t')dt' 
\Big]  dk_2dk_1
\\
&\quad
-\frac{1}{(2\pi)^2}
\int_{\mathbb R}\int_{\mathbb R}
e^{ik_1x_1+ik_2x_2-i\omega t}
\Big[
\what u_0(k_1,-k_2) 
- i \int_0^t e^{i\omega t'}\what f(k_1, -k_2, t')dt'
\Big] dk_2 dk_1
\\
&\quad
+\frac{1}{(2\pi)^2}\int_{\mathbb R}\int_{\mathbb R}
e^{ik_1x_1+ik_2x_2-i\omega t} 
\Big[
\what u_0(-k_1,-k_2) -i\int_0^t 
e^{ i\omega  t'}
\what f(-k_1, -k_2, t')dt' 
\Big] dk_2dk_1
\\
&\quad
+\frac{1}{(2\pi)^2}
\int_{\mathbb R}\int_{\mathbb R}
e^{ik_1x_1+ik_2x_2-i\omega t}
\cdot 
2k_1 \big[\, \widetilde  h_0(k_2, \omega, t) - \widetilde  h_0(-k_2, \omega, t)\big] 
dk_2dk_1
\\
&\quad
+\frac{1}{(2\pi)^2}
\int_{\mathbb R}\int_{\mathbb R}
e^{ik_1x_1+ik_2x_2-i\omega t}
\cdot 2k_2 \big[\widetilde  g_0(k_1, \omega, t) - \widetilde  g_0(-k_1, \omega, t)\big]  dk_2 dk_1.
\end{aligned}
\end{equation}

A different version of the above formula, which is the one appropriate for proving the linear estimate \eqref{fls-le}, can be obtained via contour deformations. By Cauchy's theorem, we write
\begin{equation}\label{pre-jl}
\int_{\mathbb R} e^{ik_2x_2-i\omega t} k_2 \, \widetilde g_0(\pm k_1, \omega, t) dk_2 
=
\left(-\lim_{R\to\infty} \int_{C_R} + \int_{\p D_2}\right) e^{ik_2x_2-i\omega t} k_2\, \widetilde g_0(\pm k_1, \omega, t) dk_2
\end{equation}
where $C_R$ is the quarter-circle of radius $R$ and center at the origin in the second quadrant of the complex plane, and $\p D_j$ denotes the positively oriented boundary of the first quadrant $D_j$ of the $k_j$ plane, $j=1, 2$, in which  $\text{Re}(ik_j)<0$ and $\text{Re}(ik_j^2)<0$ (see Figure \ref{qnls-dplus}). Moreover, we have
\begin{lemma}\label{jl1-l}
For each $x_2>0$, $0\leq t \leq T$ and $k_1 \in \mathbb R$, 
$$
\lim_{R\to\infty} \int_{C_R}  e^{ik_2x_2-i\omega t} k_2 \, \widetilde g_0(\pm k_1, \omega, t) dk_2 = 0.
$$
\end{lemma}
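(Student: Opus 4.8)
The plan is to reduce the claim to the classical Jordan-lemma estimate after first using one integration by parts in time to extract from $\widetilde g_0$ an extra decay factor $\omega^{-1}$. Since $\widetilde g_0(\pm k_1, \omega, 0) = 0$ by \eqref{tilde-def}, the case $t=0$ is trivial, so I fix $t \in (0, T]$ and $k_1 \in \mathbb R$. Parametrizing $C_R$ by $k_2 = Re^{i\theta}$ with $\theta \in [\tfrac \pi 2, \pi]$, one has $|k_2\, dk_2| = R^2\, d\theta$, while $\text{Im}(k_2) = R\sin\theta \geq 0$ and $\text{Im}(\omega) = \text{Im}(k_1^2 + k_2^2) = R^2 \sin(2\theta) \leq 0$ on this arc. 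Hence $|e^{ik_2 x_2}| = e^{-R\sin\theta\, x_2} \leq 1$ and, since $x_2, t \geq 0$, also $|e^{-i\omega t}| = e^{R^2 \sin(2\theta)\, t} \leq 1$; more generally, $|e^{-i\omega(t-t')}| \leq 1$ for every $t' \in [0, t]$.

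Next I would integrate by parts in the definition \eqref{tilde-def} of $\widetilde g_0$. Invoking, as in the rest of this appendix, that $g_0$ is smooth enough for $\what g_0^{x_1}(\pm k_1, \cdot)$ to be absolutely continuous on $[0,t]$, this gives
\[
e^{-i\omega t}\widetilde g_0(\pm k_1, \omega, t)
= \frac{\what g_0^{x_1}(\pm k_1, t) - e^{-i\omega t}\what g_0^{x_1}(\pm k_1, 0)}{i\omega}
- \frac{1}{i\omega}\int_0^t e^{-i\omega(t-t')}\, \partial_{t'}\what g_0^{x_1}(\pm k_1, t')\, dt'.
\]
Combining the bounds on $|e^{-i\omega t}|$ and $|e^{-i\omega(t-t')}|$ from the previous step with the elementary estimate $|\omega| = |k_1^2 + R^2 e^{2i\theta}| \geq R^2 - k_1^2 \geq \tfrac{R^2}{2}$ for $R \geq \sqrt 2\,|k_1|$, I obtain
\[
\big| e^{-i\omega t}\widetilde g_0(\pm k_1, \omega, t) \big|
\leq \frac{2}{R^2}\Big( |\what g_0^{x_1}(\pm k_1, t)| + |\what g_0^{x_1}(\pm k_1, 0)| + \int_0^t |\partial_{t'}\what g_0^{x_1}(\pm k_1, t')|\, dt' \Big)
=: \frac{2\, c(k_1, t)}{R^2},
\]
uniformly for $\theta \in [\tfrac \pi 2, \pi]$, where $c(k_1, t) < \infty$ by the smoothness of $g_0$.

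Finally, putting these estimates together on the arc $C_R$,
\[
\bigg| \int_{C_R} e^{ik_2 x_2 - i\omega t}\, k_2\, \widetilde g_0(\pm k_1, \omega, t)\, dk_2 \bigg|
\leq \int_{\pi/2}^{\pi} e^{-R\sin\theta\, x_2}\cdot R \cdot \frac{2 c(k_1, t)}{R^2} \cdot R\, d\theta
= 2 c(k_1, t) \int_{\pi/2}^{\pi} e^{-R\sin\theta\, x_2}\, d\theta,
\]
and then, substituting $\theta = \pi - \phi$ and using the standard inequality $\sin\phi \geq \tfrac{2\phi}{\pi}$ on $[0, \tfrac \pi 2]$,
\[
\int_{\pi/2}^{\pi} e^{-R\sin\theta\, x_2}\, d\theta = \int_0^{\pi/2} e^{-R\sin\phi\, x_2}\, d\phi \leq \int_0^{\pi/2} e^{-2R\phi x_2/\pi}\, d\phi \leq \frac{\pi}{2 R x_2},
\]
which is where the hypothesis $x_2 > 0$ is used. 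Hence the left-hand side is bounded by $\pi c(k_1, t)/(R x_2) \to 0$ as $R \to \infty$, proving the lemma. The only genuinely non-trivial point is that the factor $k_2$ in the integrand produces an extra power of $R$ that the decay of $e^{ik_2 x_2}$ alone cannot absorb — Jordan's lemma gains only one power of $R$ — so the integration-by-parts step furnishing the compensating $\omega^{-1} \sim R^{-2}$ decay is essential; I note also that $y_1$ does not enter this argument and is only carried along for the subsequent contour deformation.
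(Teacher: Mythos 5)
Your proof is correct and follows essentially the same route as the paper's: one integration by parts in $t'$ to extract the $\omega^{-1}\sim R^{-2}$ decay, the observation that $|e^{-i\omega(t-t')}|\leq 1$ on the arc since $\sin(2\theta)\leq 0$ there, and the Jordan-type bound $\int_0^{\pi/2}e^{-Rx_2\sin\phi}\,d\phi\leq \tfrac{\pi}{2Rx_2}$ via $\sin\phi\geq \tfrac{2\phi}{\pi}$. The only cosmetic difference is that you lower-bound $|\omega|$ by $R^2/2$ for $R\geq\sqrt{2}\,|k_1|$ where the paper keeps the factor $R^2/(R^2-|k_1|^2)$ for $R>|k_1|$; both give the same $O(1/R)$ conclusion.
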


\begin{proof}
Recalling that $\widetilde g_0(\pm k_1, \omega, t) = 
\int_0^t e^{i\omega t'} \what g_0^{x_1}(\pm k_1, t') dt'$, 
we integrate by parts in $t'$ to obtain
$$
\int_0^t e^{i\omega t'} \what g_0^{x_1}(\pm k_1, t') dt'
=
\frac{1}{i\omega}
\left[ e^{i\omega t} \what g_0^{x_1}(\pm k_1, t) - \what g_0^{x_1}(\pm k_1, 0) - \int_0^t e^{i\omega t'} \p_{t'} \what g_0^{x_1}(\pm k_1, t') dt' \right].
$$
Thus, since $\omega = k_1^2 + k_2^2$ and $\text{Re}(ik_1^2) = 0$ for $k_1 \in \mathbb R$, parametrizing $C_R$ by $k_2 = Re^{i\theta}$, $\frac \pi 2 \leq \theta \leq \pi$, we find
\begin{align}
&\quad
\left|\int_{C_R}  e^{ik_2x_2-i\omega t} k_2 \, \widetilde g_0(\pm k_1, \omega, t) dk_2\right|
\nn\\
&\leq
\int_{\frac \pi 2}^\pi  e^{-R x_2  \sin\theta} 
\frac{R}{\left|k_1^2 + R^2 e^{2i\theta}\right|} \left[ \left|\what g_0^{x_1}(\pm k_1, t)\right| + e^{R^2 \sin(2\theta) t} \left|\what g_0^{x_1}(\pm k_1, 0)\right|
 + \int_0^t e^{R^2 \sin(2\theta)(t-t')} \left|\p_{t'} \what g_0^{x_1}(\pm k_1, t')\right| dt' \right] R d\theta
\nn\\
&\leq
\int_{\frac \pi 2}^\pi  e^{-R x_2  \sin\theta} \frac{R^2}{R^2-|k_1|^2}
\left[ \left|\what g_0^{x_1}(\pm k_1, t)\right| + \left|\what g_0^{x_1}(\pm k_1, 0)\right|
 + \int_0^t  \left|\p_{t'} \what g_0^{x_1}(\pm k_1, t')\right| dt' \right] d\theta
\nn\\
 &\leq
\left(2\no{\what g_0^{x_1}(\pm k_1)}_{L_t^\infty(0, T)} +  \no{\p_t \what g_0^{x_1}(\pm k_1)}_{L_t^1(0, T)}\right) \frac{R^2}{R^2-|k_1|^2} \int_0^{\frac \pi 2}  e^{-R x_2  \sin \theta}  d\theta,
\nn
\end{align}
where for each given $k_1 \in \mathbb R$ we work with $R > |k_1|$. 
Then, using the inequality $\sin\theta \geq \frac 2\pi \theta$, $0\leq \theta \leq \frac \pi 2$, we conclude  
\begin{align*}
\left|\int_{C_R}  e^{ik_2x_2-i\omega t} k_2 \, \widetilde g_0(\pm k_1, \omega, t) dk_2\right|
&\leq
\left(2\no{\what g_0^{x_1}(\pm k_1)}_{L_t^\infty(0, T)} +  \no{\p_t \what g_0^{x_1}(\pm k_1)}_{L_t^1(0, T)}\right) \frac{R^2}{R^2-|k_1|^2} \cdot \frac{\pi}{2Rx_2} \left(1-e^{-Rx_2}\right),
\end{align*}
which yields the desired result for each $x_2>0$ and $k_1 \in \mathbb R$.
\end{proof}
In view of Lemma \ref{jl1-l}, \eqref{pre-jl} becomes
\begin{equation*}
\int_{\mathbb R} e^{ik_2x_2-i\omega t} k_2 \, \widetilde g_0(\pm k_1, \omega, t) dk_2 
=
\int_{\p D_2} e^{ik_2x_2-i\omega t} k_2\, \widetilde g_0(\pm k_1, \omega, t) dk_2
\end{equation*}
for each $k_1 \in \mathbb R$,  $x_2>0$ and $0\leq t \leq T$. In turn,
\begin{equation}\label{qnls-dd1}
\int_{\mathbb R}\int_{\mathbb R}
e^{ik_1x_1+ik_2x_2-i\omega t} k_2 \widetilde  g_0(-k_1, \omega, t) dk_2dk_1
=
\int_{\mathbb R}\int_{\p D_2}
e^{ik_1x_1+ik_2x_2-i\omega t} k_2 \widetilde  g_0(-k_1, \omega, t) dk_2dk_1.
\end{equation}
Analogously, we also have
\begin{equation}\label{qnls-dd2}
\int_{\mathbb R}\int_{\mathbb R} e^{ik_1x_1+ik_2x_2-i\omega t} k_1 \widetilde  h_0(-k_2, \omega, t) dk_2 dk_1
=
\int_{\mathbb R}\int_{\p D_1} e^{ik_1x_1+ik_2x_2-i\omega t} k_1 \widetilde  h_0(-k_2, \omega, t) dk_1 dk_2.
\end{equation}
In turn, the solution formula \eqref{qnls-utm-sol-0} can be written as
\begin{equation}\label{qnls-utm-sol2}
\begin{aligned}
u(x_1, x_2, t)
&=
\frac{1}{(2\pi)^2}
\int_{\mathbb R}\int_{\mathbb R}
e^{ik_1x_1+ik_2x_2-i\omega t}
\Big[
\what u_0(k_1, k_2)
-i\int_0^t 
e^{i\omega t'}\what f(k_1, k_2, t')dt'
\Big]
dk_2dk_1
\\
&\quad
-\frac{1}{(2\pi)^2}
\int_{\mathbb R}\int_{\mathbb R}
e^{ik_1x_1+ik_2x_2-i\omega t}
\Big[
\what u_0(-k_1,k_2)
-
i \int_0^t e^{i\omega t'}\what f(-k_1, k_2, t')dt' 
\Big]  dk_2dk_1
\\
&\quad
-\frac{1}{(2\pi)^2}
\int_{\mathbb R}\int_{\mathbb R}
e^{ik_1x_1+ik_2x_2-i\omega t}
\Big[
\what u_0(k_1,-k_2) 
- i \int_0^t e^{i\omega t'}\what f(k_1, -k_2, t')dt'
\Big] dk_2 dk_1
\\
&\quad
+\frac{1}{(2\pi)^2}\int_{\mathbb R}\int_{\mathbb R}
e^{ik_1x_1+ik_2x_2-i\omega t} 
\Big[
\what u_0(-k_1,-k_2) -i\int_0^t 
e^{ i\omega  t'}
\what f(-k_1, -k_2, t')dt' 
\Big] dk_2dk_1
\\
&\quad
+\frac{1}{(2\pi)^2}
\int_{\mathbb R}\int_{\p D_1}
e^{ik_1x_1+ik_2x_2-i\omega t}
\cdot 
2k_1 \big[\, \widetilde  h_0(k_2, \omega, t) - \widetilde  h_0(-k_2, \omega, t)\big] 
dk_1dk_2
\\
&\quad
+\frac{1}{(2\pi)^2}
\int_{\mathbb R}\int_{\p D_2}
e^{ik_1x_1+ik_2x_2-i\omega t}
\cdot 2k_2 \big[\widetilde  g_0(k_1, \omega, t) - \widetilde  g_0(-k_1, \omega, t)\big]  dk_2 dk_1.
\end{aligned}
\end{equation}

The next important observation is that $t$ in the argument of the tilde transforms can be replaced by any fixed $T\geq t\geq 0$. Indeed, we have
\begin{lemma}\label{jl3-l}
For each $k_2 \in \mathbb R$, $x_1>0$ and $0\leq t \leq T$, 
\begin{equation}
\int_{\p D_1} 
e^{ik_1x_1-i\omega t}
k_1 \int_t^T e^{i\omega t'} \what h_0^{x_2}(\pm k_2, t') dt' dk_1
= 0.
\end{equation}
\end{lemma}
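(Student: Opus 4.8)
The plan is to show that the integrand, viewed as a function of $k_1$, is entire on the closed first quadrant $\overline{D_1}$ and decays fast enough there that the rays making up $\partial D_1$ can be "closed at infinity" by a quarter-circular arc whose contribution vanishes; Cauchy's theorem then forces the integral over $\partial D_1$ to be zero. This is the same mechanism as in the proof of Lemma \ref{jl1-l}.

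First I would set $\Psi(k_1) := e^{ik_1x_1-i\omega t}\, k_1 \int_t^T e^{i\omega t'}\what h_0^{x_2}(\pm k_2,t')\,dt'$ with $\omega=k_1^2+k_2^2$. Since $\omega$, $e^{ik_1x_1}$, $e^{-i\omega t}$, and the $t'$-integral over the compact interval $[t,T]$ of the entire function $k_1\mapsto e^{i\omega t'}$ are all entire in $k_1$, $\Psi$ is entire. For fixed $R>|k_2|$, I apply Cauchy's theorem to the closed contour formed by the part of $\partial D_1$ inside $\{|k_1|\le R\}$ together with the arc $C_R=\{Re^{i\phi}:0\le\phi\le\tfrac\pi2\}$ (see Figure \ref{qnls-dplus}), obtaining $\int_{\partial D_1\cap\{|k_1|\le R\}}\Psi\,dk_1=-\int_{C_R}\Psi\,dk_1$. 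Hence it suffices to prove $\int_{C_R}\Psi\,dk_1\to 0$ as $R\to\infty$, since the left-hand side tends to $\int_{\partial D_1}\Psi\,dk_1$ by the definition of this improper contour integral.

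For the arc estimate I would mimic the integration-by-parts trick of Lemma \ref{jl1-l}: on $C_R$ (where $\omega\neq 0$ for $R>|k_2|$) write $\int_t^T e^{i\omega t'}\what h_0^{x_2}(\pm k_2,t')dt' = \tfrac{1}{i\omega}\big[e^{i\omega T}\what h_0^{x_2}(\pm k_2,T)-e^{i\omega t}\what h_0^{x_2}(\pm k_2,t)-\int_t^T e^{i\omega t'}\p_{t'}\what h_0^{x_2}(\pm k_2,t')dt'\big]$, so that $\Psi=\tfrac{k_1}{i\omega}e^{ik_1x_1}[\cdots]$ with the decisive gain $|k_1/\omega|\le R/(R^2-|k_2|^2)$ on $C_R$. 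On $C_R$ one has $|e^{ik_1x_1}|=e^{-Rx_1\sin\phi}$, and since $\mathrm{Re}(ik_1^2)=-R^2\sin(2\phi)\le 0$ for $\phi\in[0,\tfrac\pi2]$ while $\mathrm{Re}(ik_2^2)=0$, the factors $e^{i\omega(T-t)}$ and $e^{i\omega(t'-t)}$ (with $T-t,\,t'-t\ge 0$) have modulus $\le 1$, whence the bracket is bounded by $2\no{\what h_0^{x_2}(\pm k_2)}_{L_t^\infty(0,T)}+\no{\p_t\what h_0^{x_2}(\pm k_2)}_{L_t^1(0,T)}$. Parametrizing $k_1=Re^{i\phi}$ and using $\sin\phi\ge\tfrac2\pi\phi$ on $[0,\tfrac\pi2]$ gives $\big|\int_{C_R}\Psi\,dk_1\big|\lesssim \tfrac{R^2}{R^2-|k_2|^2}\int_0^{\pi/2}e^{-Rx_1\sin\phi}d\phi\lesssim \tfrac{R}{x_1(R^2-|k_2|^2)}\to 0$, completing the argument.

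The main obstacle — and the reason the integration by parts is indispensable — is that the explicit factor $k_1$ makes the naive bound on the arc blow up like $R$; one $t'$-integration by parts supplies the extra $1/\omega\sim 1/R^2$ decay needed to overcome it. A secondary technical point is the apparent pole at $\omega=0$ created by this manipulation, which sits on $\partial D_1$ at $k_1=i|k_2|$ when $k_2\neq0$: it is removable because the bracket vanishes there, so $\Psi$ remains entire and Cauchy's theorem applies without obstruction — and in any case one may invoke Cauchy's theorem only for the manifestly entire original form of $\Psi$, using the integration-by-parts form solely on $C_R$, where $\omega\neq 0$. The standing assumption of sufficient smoothness and decay of $h_0$ (repeating the integration by parts in $t'$ if absolute convergence of the integral over $\partial D_1$ is desired) handles the remaining bookkeeping.
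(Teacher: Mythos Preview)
Your proof is correct and follows essentially the same approach as the paper: deform $\partial D_1$ to the quarter-circular arc $\widetilde C_R$ in the first quadrant via Cauchy's theorem, perform a single integration by parts in $t'$ to gain the factor $1/\omega$, and then use the Jordan-type bound $\int_0^{\pi/2}e^{-Rx_1\sin\phi}\,d\phi\lesssim (Rx_1)^{-1}$ together with $|k_1/\omega|\le R/(R^2-|k_2|^2)$ to show the arc contribution vanishes. Your added remarks on why the integration by parts is needed and on the removable singularity at $k_1=i|k_2|$ are accurate and make explicit a point the paper leaves implicit.
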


\begin{proof}
By Cauchy's theorem, we can deform $\p D_1$ to the contour $\lim_{R\to\infty}\widetilde C_R$ denoting the quarter-circle that bounds the first quadrant of the complex plane. Then, 
$$
\int_{\p D_1} 
e^{ik_1x_1-i\omega t}
k_1 \int_t^T e^{i\omega t'} \what h_0^{x_2}(\pm k_2, t') dt' dk_1
=
-\lim_{R\to\infty} \int_{\widetilde C_R} 
e^{ik_1x_1-i\omega t}
k_1 \int_t^T e^{i\omega t'} \what h_0^{x_2}(\pm k_2, t') dt' dk_1.
$$
Integrating by parts in $t'$,  
$$
\int_t^T e^{i\omega t'} \what h_0^{x_2}(\pm k_2, t') dt' 
=
\frac{1}{i\omega}
\left[e^{i\omega T} \what h_0^{x_2}(\pm k_2, T) - e^{i\omega t} \what h_0^{x_2}(\pm k_2, t) - \int_t^T e^{i\omega t'} \p_{t'} \what h_0^{x_2}(\pm k_2, t') dt' \right].
$$
Thus, parametrizing $\widetilde C_R$ by $k_1 = Re^{i\theta}$, $0\leq\theta\leq\frac \pi 2$, for each $k_2\in\mathbb R$ (for which $\text{Re}(ik_2^2)=0$) we have
\begin{align}
&\quad
\left|
\int_{\widetilde C_R} 
e^{ik_1x_1-i\omega t}
k_1 \int_t^T e^{i\omega t'} \what h_0^{x_2}(\pm k_2, t') dt' dk_1
\right|
\nn\\
&\leq
\int_0^{\frac \pi 2} 
e^{-R x_1 \sin\theta}
\frac{R}{\left|R^2 e^{2i\theta} + k_2^2\right|}
\bigg[
e^{-R^2 \sin(2\theta)(T-t)} \left|\what h_0^{x_2}(\pm k_2, T)\right| 
+ \left|\what h_0^{x_2}(\pm k_2, t)\right| 
\nn\\
&\hspace*{4.9cm}
+ \int_t^T e^{-R^2 \sin(2\theta)(t'-t)} \left|\p_{t'} \what h_0^{x_2}(\pm k_2, t'))\right| dt'
\bigg]
R d\theta
\nn\\
&\leq
\frac{R^2}{R^2 - |k_2|^2} 
\left(
2\no{\what h_0^{x_2}(\pm k_2)}_{L_t^\infty(0, T)} + \no{\p_t \what h_0^{x_2}(\pm k_2)}_{L_t^1(0, T)}
\right)
\int_0^{\frac \pi 2} e^{-R x_1 \sin\theta} d\theta.
\nn
\end{align}
Hence, using once again  the inequality $\sin\theta \geq \frac 2\pi \theta$, $0\leq \theta \leq \frac \pi 2$, we obtain
\begin{align*}
&\quad
\left|
\int_{\widetilde C_R} 
e^{ik_1x_1-i\omega t}
k_1 \int_t^T e^{i\omega t'} \what h_0^{x_2}(\pm k_2, t') dt' dk_1
\right|
\nn\\
&\leq 
\left(
2\no{\what h_0^{x_2}(\pm k_2)}_{L_t^\infty(0, T)} + \no{\p_t \what h_0^{x_2}(\pm k_2)}_{L_t^1(0, T)}
\right)
\frac{R^2}{R^2 - |k_2|^2}
\cdot \frac{\pi}{2Rx_1} \left(1-e^{-Rx_1}\right),
\end{align*}
implying the desired result for each $k_2\in\mathbb R$ and $x_1>0$m $0\leq t \leq T$.
\end{proof}

Lemma \ref{jl3-l} implies
\begin{equation}
\int_{\mathbb R} \int_{\p D_1}
e^{ik_1x_1+ik_2x_2-i\omega t}
k_1  \widetilde  h_0(\pm k_2, \omega, t) dk_1 dk_2
=
\int_{\mathbb R} \int_{\p D_1}
e^{ik_1x_1+ik_2x_2-i\omega t}
k_1  \widetilde  h_0(\pm k_2, \omega, T) dk_1 dk_2.
\end{equation}
Similarly, we have
\begin{equation}
\int_{\mathbb R} \int_{\p D_2}
e^{ik_1x_1+ik_2x_2-i\omega t}
k_2  \widetilde  g_0(\pm k_1, \omega, t) dk_2 dk_1
=
\int_{\mathbb R} \int_{\p D_2}
e^{ik_1x_1+ik_2x_2-i\omega t}
k_2  \widetilde  g_0(\pm k_1, \omega, T) dk_2 dk_1.
\end{equation}
Therefore, formula \eqref{qnls-utm-sol2} can be written in the form \eqref{qnls-utm-sol}, which is the one convenient for proving the linear estimates of Theorems \ref{sob-est-t} and \ref{strich-est-t}.

\section{Zero extension operator for Sobolev spaces on the quarter-plane}
\label{app-s}

For each $s\geq 0$, define
\begin{equation}\label{xs-app-def}
E_s :=  \left\{u\in L^2(\mathbb{R}_+^2): \tilde{u}\in H^s(\mathbb{R}^2)\right\}
\end{equation}
where $\tilde{u}$ denotes the extension of $u$ by zero outside $\mathbb R_+^2$, i.e. 
\begin{equation}\label{util-def}
\tilde u(x) := \left\{\begin{array}{ll} u(x), & x\in\mathbb R_+^2, \\ 0, & x\in\mathbb R^2\setminus \mathbb R_+^2.\end{array}\right.
\end{equation}
The purpose of this appendix is to establish the following result:
\begin{proposition}\label{es-p}
If $0\leq s < \frac 12$, then $E_s = H^s(\mathbb R_+^2)$ and the zero extension $\tilde u$ of each $u \in E_s$ satisfies the bound
\begin{equation}
\no{\tilde{u}}_{{H}^s(\mathbb{R}^2)}\leq c_s \no{u}_{H^s(\mathbb{R}_+^2)}, \ c_s > 0.
\end{equation}
\end{proposition}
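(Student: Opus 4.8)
The statement has two parts, the set equality $E_s = H^s(\mathbb{R}_+^2)$ and the quantitative bound on the zero extension. The inclusion $E_s \subseteq H^s(\mathbb{R}_+^2)$, together with $\no{u}_{H^s(\mathbb{R}_+^2)} \le \no{\tilde u}_{H^s(\mathbb{R}^2)}$, is immediate from the definition of the restriction (infimum) norm: for $u \in E_s$ the function $\tilde u$ of \eqref{util-def} is itself an admissible extension of $u$. Thus the whole content is the reverse inclusion $H^s(\mathbb{R}_+^2) \subseteq E_s$ with the inequality $\no{\tilde u}_{H^s(\mathbb{R}^2)} \le c_s \no{u}_{H^s(\mathbb{R}_+^2)}$, and this is precisely where the corner of $\mathbb{R}_+^2$ at the origin makes the classical extension theorems (which flatten the boundary locally and hence require a $C^1$, or at least compact, boundary) inapplicable in their standard form.

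The plan is to bypass any two-dimensional extension operator and reduce to the one-dimensional extension by zero on the half-line, whose boundary is a single point. The bridge is the fact that the $L^2$-based Sobolev scale \emph{tensorizes}: since $\big(1+k_1^2+k_2^2\big)^s \simeq \big(1+k_1^2\big)^s + \big(1+k_2^2\big)^s$ for every $s \ge 0$, Plancherel's theorem applied in one variable at a time gives, for all $v \in H^s(\mathbb{R}^2)$,
\[
\no{v}_{H^s(\mathbb{R}^2)}^2 \simeq \int_{\mathbb{R}} \no{v(\cdot, x_2)}_{H_{x_1}^s(\mathbb{R})}^2 \, dx_2 + \int_{\mathbb{R}} \no{v(x_1, \cdot)}_{H_{x_2}^s(\mathbb{R})}^2 \, dx_1 .
\]
The one-dimensional ingredient is then: for $0 \le s < \tfrac12$, the extension by zero is bounded from $H^s(\mathbb{R}_+)$ to $H^s(\mathbb{R})$. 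For $s = 0$ this is trivial; for $0 < s < \tfrac12$ it is the half-line analogue of Theorem~11.4 of \cite{lm1972}, and it also follows directly by writing the Gagliardo seminorm of the zero extension $\tilde w$ as the Gagliardo seminorm of $w$ on $\mathbb{R}_+$ plus the boundary term $\tfrac1s \int_0^\infty x^{-2s}|w(x)|^2 \, dx$ and invoking the fractional Hardy inequality, which is exactly where the restriction $s < \tfrac12$ is forced. Note that for a.e.\ fixed $x_2 > 0$ the slice $\tilde u(\cdot, x_2)$ \emph{is} the one-dimensional zero extension of $u(\cdot, x_2) \in H_{x_1}^s(\mathbb{R}_+)$, while $\tilde u(\cdot, x_2) \equiv 0$ for $x_2 \le 0$, and symmetrically in the other variable.

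Combining these, the argument runs as follows. Applying the tensorization identity to $\tilde u$, discarding the vanishing slices, and using the one-dimensional bound slice by slice,
\[
\no{\tilde u}_{H^s(\mathbb{R}^2)}^2 \lesssim \int_{\mathbb{R}_+} \no{u(\cdot, x_2)}_{H_{x_1}^s(\mathbb{R}_+)}^2 \, dx_2 + \int_{\mathbb{R}_+} \no{u(x_1, \cdot)}_{H_{x_2}^s(\mathbb{R}_+)}^2 \, dx_1 .
\]
On the other hand, for \emph{any} extension $v \in H^s(\mathbb{R}^2)$ of $u$ one has $u(\cdot, x_2) = v(\cdot, x_2)\big|_{\mathbb{R}_+}$ for a.e.\ $x_2 > 0$, hence $\no{u(\cdot, x_2)}_{H_{x_1}^s(\mathbb{R}_+)} \le \no{v(\cdot, x_2)}_{H_{x_1}^s(\mathbb{R})}$; integrating and using the trivial bound $(1+k_1^2)^s \le (1+k_1^2+k_2^2)^s$ yields $\int_{\mathbb{R}_+} \no{u(\cdot, x_2)}_{H_{x_1}^s(\mathbb{R}_+)}^2 \, dx_2 \lesssim \no{v}_{H^s(\mathbb{R}^2)}^2$, and likewise for the $x_1$-integral. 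Taking the infimum over all such $v$ shows that both integrals on the right-hand side of the last display are $\lesssim \no{u}_{H^s(\mathbb{R}_+^2)}^2$, and chaining the two displays gives $\no{\tilde u}_{H^s(\mathbb{R}^2)} \le c_s \no{u}_{H^s(\mathbb{R}_+^2)}$, which in particular shows $H^s(\mathbb{R}_+^2) \subseteq E_s$ and completes the proof.

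I expect no essential obstacle: the reduction to the half-line does all the work, and the corner is handled precisely by \emph{not} needing a genuine $\mathbb{R}_+^2$ extension. The points requiring care in the write-up are purely technical: the Fubini and measurability arguments needed to pass between slice norms and the Fourier-side norm (cleanest if, for $0<s<\tfrac12$, one works throughout with the Gagliardo seminorm, which is a genuine double integral and is equivalent on $\mathbb{R}_+$ and on $\mathbb{R}_+^2$ to the respective restriction norms, treating $s=0$ separately by Plancherel), and checking that the one-dimensional constant is uniform in the frozen variable. If one instead attempted a direct two-dimensional proof via the Gagliardo double integral on $\mathbb{R}_+^2$, the genuine difficulty would be a Hardy-type inequality with weight a power of $\operatorname{dist}(x,\partial\mathbb{R}_+^2)$ that behaves correctly at the two edges and at the corner simultaneously; the tensorization above avoids this entirely.
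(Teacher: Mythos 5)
Your proof is correct, but it takes a genuinely different route from the paper. The paper argues entirely in two dimensions: it first invokes the density result $H_0^s(\mathbb R_+^2)=H^s(\mathbb R_+^2)$ for $s\le\frac12$ (Theorem 6.78 of \cite{l2023}, needed precisely because the quarter-plane boundary is Lipschitz and non-compact), then computes the two-dimensional Gagliardo seminorm of the zero extension of a test function, which produces the boundary term $\int_{\mathbb R_+^2}\left(\min\{x_1,x_2\}\right)^{-2s}|u|^2\,dx$, and finally controls that term by a quarter-plane fractional Hardy inequality proved by rotating $\mathbb R_+^2$ onto the subgraph domain $\{\xi_2<-|\xi_1|\}$ and applying the one-dimensional Hardy inequality in the vertical direction. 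Your tensorization argument --- $\left(1+k_1^2+k_2^2\right)^s\simeq\left(1+k_1^2\right)^s+\left(1+k_2^2\right)^s$, Plancherel one variable at a time, and the one-dimensional zero-extension bound applied slice by slice --- replaces all three of these ingredients by the half-line extension theorem, which is available off the shelf since $\partial\mathbb R_+=\{0\}$ is compact (e.g.\ Theorem 3.33 of \cite{mclean2000}, or the direct Gagliardo-plus-Hardy computation you sketch, which is where $s<\frac12$ enters in both proofs). What each buys: your route is shorter, sidesteps the non-compact-Lipschitz-boundary density theorem entirely, and exploits the product structure of the quarter-plane (so it extends immediately to orthants in any dimension); the paper's route is not tied to product domains --- the rotation-to-a-graph device and the weighted Hardy inequality adapt to general Lipschitz corner domains --- and it yields the auxiliary identifications $E_s=\tilde H^s(\mathbb R_+^2)=H^s(\overline{\mathbb R_+^2})=H_0^s(\mathbb R_+^2)$ as byproducts. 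The technical caveats you flag (measurability of slice norms, uniformity of the one-dimensional constant) are real but harmless, since you only integrate pointwise inequalities between the Fourier/Gagliardo representations of $\tilde u(\cdot,x_2)$ and $v(\cdot,x_2)$ for a fixed global extension $v$, taking the infimum over $v$ only at the end.
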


Proposition \ref{es-p} will be proved below by combining a series of results. In order to state these results, we first give some definitions.

Let $\mathcal{D}(\mathbb{R}_+^2)$ denote the space of test functions defined on $\mathbb{R}^2$ but with support in $\mathbb{R}_+^2$ (i.e. with support in a compact subset of $\mathbb R^2$ contained inside the open quadrant $\mathbb{R}_+^2$).  Furthermore, let $\mathscr{D}(\mathbb{R}_+^2)$ denote the space of test functions  defined on $\mathbb{R}_+^2$ (i.e. with support in a compact subset of the open quadrant $\mathbb{R}_+^2$). Note that $\mathcal{D}(\mathbb{R}_+^2) = \mathscr{D}(\mathbb{R}_+^2)$ in the sense that: (i) if $u\in \mathcal{D}(\mathbb{R}_+^2)$, then its restriction $u|_{\mathbb R_+^2}$ belongs to $ \mathscr{D}(\mathbb{R}_+^2)$ as the support of this restriction is a compact set inside $\mathbb{R}_+^2$, and (ii) if $u\in \mathscr{D}(\mathbb{R}_+^2)$, then its extension by zero outside $\mathbb R_+^2$ belongs to $\mathcal{D}(\mathbb{R}_+^2)$. Thus, hereafter we use the notation $\mathcal{D}(\mathbb{R}_+^2)$ to denote both of the above spaces of test functions.

For $s\in \mathbb{R}$,  define the space $H_0^s(\mathbb{R}_+^2)$ as the closure of $\mathcal{D}(\mathbb{R}_+^2)$ in $H^s(\mathbb{R}_+^2)$, i.e.
\begin{equation}\label{h0s-def}
H_0^s(\mathbb{R}_+^2) := \overline{\mathcal{D}(\mathbb{R}_+^2)}^{H^s(\mathbb{R}_+^2)}.
\end{equation}
Proposition \ref{es-p} can be deduced by combining the following two important results:
\begin{theorem}[Theorem 6.78 in \cite{l2023}]\label{h0s-t} \ 
If $0 \leq s \leq \frac 12$, then $H_0^s(\mathbb R_+^2) = H^s(\mathbb R_+^2)$.
\end{theorem}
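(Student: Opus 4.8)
The inclusion $H_0^s(\mathbb R_+^2)\subseteq H^s(\mathbb R_+^2)$ is immediate from the definition \eqref{h0s-def}, so the whole content is the density of $\mathcal D(\mathbb R_+^2)$ in $H^s(\mathbb R_+^2)$. The plan is to carry this out through two reductions followed by a cutoff estimate whose scaling exponent degenerates exactly at $s=\tfrac12$. For the first reduction, recall that $H^s(\mathbb R_+^2)$ is \emph{by definition} the restriction space of $H^s(\mathbb R^2)$ equipped with the infimum norm, so the restriction map $H^s(\mathbb R^2)\to H^s(\mathbb R_+^2)$ is a metric quotient map; since $C_c^\infty(\mathbb R^2)$ is dense in $H^s(\mathbb R^2)$, it follows that the restrictions to $\mathbb R_+^2$ of functions in $C_c^\infty(\mathbb R^2)$ are dense in $H^s(\mathbb R_+^2)$. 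Hence it suffices to approximate, in the $H^s(\mathbb R_+^2)$ norm, an arbitrary $u=\varphi|_{\mathbb R_+^2}$ with $\varphi\in C_c^\infty(\mathbb R^2)$; in particular $u$ is bounded, smooth up to the boundary, and has compact support in $\overline{\mathbb R_+^2}$. For the second reduction, we work with the intrinsic Sobolev--Slobodeckij description of the norm on the Lipschitz domain $\mathbb R_+^2$ (the same equivalence invoked in the proof of Theorem \ref{sob-est-t}),
\[
\no{v}_{H^s(\mathbb R_+^2)}^2 \simeq \no{v}_{L^2(\mathbb R_+^2)}^2 + \iint_{\mathbb R_+^2\times\mathbb R_+^2}\frac{|v(x)-v(y)|^2}{|x-y|^{2+2s}}\,dx\,dy =: \no{v}_{L^2(\mathbb R_+^2)}^2 + [v]_{s,\mathbb R_+^2}^2 ,
\]
so it is enough to produce $\phi_\epsilon\in\mathcal D(\mathbb R_+^2)$ with $\no{u-\phi_\epsilon}_{L^2(\mathbb R_+^2)}\to0$ and $[u-\phi_\epsilon]_{s,\mathbb R_+^2}\to0$ as $\epsilon\to0^+$.

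For $0\le s<\tfrac12$ I would use the sharp cutoff $\phi_\epsilon=\theta_\epsilon u$ with $\theta_\epsilon(x)=\zeta(x_1/\epsilon)\zeta(x_2/\epsilon)$, where $\zeta\in C^\infty(\mathbb R)$, $0\le\zeta\le1$, $\zeta\equiv0$ on $(-\infty,1]$, $\zeta\equiv1$ on $[2,\infty)$; then $\theta_\epsilon u$ has support inside $\{x_1\ge\epsilon,\ x_2\ge\epsilon\}$, hence $\theta_\epsilon u\in\mathcal D(\mathbb R_+^2)$. The remainder $(1-\theta_\epsilon)u$ is supported in the thin ``L-shaped'' set $\{x_1<2\epsilon\}\cup\{x_2<2\epsilon\}$ inside $\mathbb R_+^2$, so $\no{(1-\theta_\epsilon)u}_{L^2(\mathbb R_+^2)}\to0$ by dominated convergence. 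For the seminorm I would split $1-\theta_\epsilon=[1-\zeta(x_1/\epsilon)]+\zeta(x_1/\epsilon)[1-\zeta(x_2/\epsilon)]$ and, by symmetry, bound the contribution of a single one-variable factor $\psi_\epsilon(x_1):=1-\zeta(x_1/\epsilon)$, which is supported in $\{0\le x_1\le2\epsilon\}$ and equals $1$ on $\{0\le x_1\le\epsilon\}$. Writing $\psi_\epsilon(x_1)u(x)-\psi_\epsilon(y_1)u(y)=\psi_\epsilon(x_1)[u(x)-u(y)]+[\psi_\epsilon(x_1)-\psi_\epsilon(y_1)]u(y)$, the first piece contributes $\iint\mathbf 1_{\{x_1<2\epsilon\}}|u(x)-u(y)|^2|x-y|^{-2-2s}\to0$ by dominated convergence; for the second piece one integrates out one transverse variable, producing the one-dimensional kernel $|x_1-y_1|^{-1-2s}$, and is left to bound
\[
\int_0^R\!\int_0^\infty \frac{|\psi_\epsilon(x_1)-\psi_\epsilon(y_1)|^2}{|x_1-y_1|^{1+2s}}\,dx_1\,dy_1 ,
\]
where $[0,R]$ contains the $x_1$-projection of $\operatorname{supp}u$. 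The crucial point is that, because $u$ is supported in $\overline{\mathbb R_+^2}$, the variable $y_1$ effectively ranges only over $[0,2\epsilon)$ wherever $\psi_\epsilon(y_1)\neq0$, and an explicit computation then yields a bound of order $\epsilon^{1-2s}$, which tends to $0$ precisely when $s<\tfrac12$. Combining these pieces with the elementary product estimate for (bounded smooth)$\,\times\,H^s$ completes the range $0\le s<\tfrac12$.

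The endpoint $s=\tfrac12$ is the main obstacle: the scaling above returns $\epsilon^{1-2s}=1$, so the sharp cutoff fails. The remedy I would adopt is the classical two-parameter logarithmic cutoff $\theta_{\epsilon,\delta}$ that interpolates from $0$ to $1$ on the range $\epsilon\le x_j\le\delta$ via $\theta_{\epsilon,\delta}\sim\log(x_j/\epsilon)/\log(\delta/\epsilon)$; the corresponding half-line seminorm $[\theta_{\epsilon,\delta}]^2_{H^{1/2}}$ is of order $1/\log(\delta/\epsilon)$, so letting $\epsilon\to0$ for fixed $\delta$ kills the cutoff$\times$cutoff contribution, while the cutoff$\times u$ contribution is controlled as before and then $\delta\to0$. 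The bookkeeping here is delicate because $1-\theta_{\epsilon,\delta}$ is no longer small in $L^\infty$, so one must exploit that it is small away from a thin neighborhood of the axes together with the local finiteness of $[u]_{s}$; pushing this through, in concert with the corner — i.e. the overlap of the two axis-layers — is where the real work lies. An alternative route for the endpoint is to invoke the half-space density theorem (Lions--Magenes, \cite{lm1972}, Thm.\ 11.1) in each variable together with a tensor-type iteration, at the cost of first establishing the mixed-norm identification $H^s(\mathbb R_+^2)\cong L^2_{x_1}H^s_{x_2}\cap H^s_{x_1}L^2_{x_2}$. In either case, collecting the $L^2$ and seminorm bounds and using the norm equivalence gives $\phi_\epsilon\to u$ in $H^s(\mathbb R_+^2)$, which yields $H^s(\mathbb R_+^2)\subseteq H_0^s(\mathbb R_+^2)$ and hence the asserted equality.
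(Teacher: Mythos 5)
The paper does not prove this statement at all: it is quoted verbatim as Theorem 6.78 of \cite{l2023} (Leoni), and the surrounding discussion only explains why the more classical references (Theorem 11.5 of \cite{lm1972}, Theorem 3.40 of \cite{mclean2000}) cannot be cited directly, since $\p\mathbb R_+^2$ is neither smooth nor compact. Your proposal therefore necessarily takes a different route, namely an actual self-contained proof. For $0\le s<\tfrac12$ your argument is sound and is essentially the standard one: reduce to $u=\varphi|_{\mathbb R_+^2}$ with $\varphi\in C_c^\infty(\mathbb R^2)$ via the quotient-map property of the restriction norm, pass to the intrinsic Gagliardo seminorm, and use the tensor cutoff $\zeta(x_1/\epsilon)\zeta(x_2/\epsilon)$; the decisive scaling $[\psi_\epsilon]_{H^s(\mathbb R_+)}^2\sim\epsilon^{1-2s}$ is computed correctly and degenerates exactly at $s=\tfrac12$, and the compact support of $u$ after the first reduction neutralizes the non-compactness of the boundary while the product cutoff handles the corner. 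One ingredient you invoke without justification is the equivalence of the intrinsic Sobolev--Slobodecki norm on $\mathbb R_+^2$ with the restriction (infimum) norm; this requires an extension operator for the quarter-plane (e.g. iterated even reflection in $x_1$ and $x_2$, valid for $0<s<1$), and given that the whole point of Appendix B is that such extension facts need care on this domain, it deserves at least a sentence.

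The genuine gap is the endpoint $s=\tfrac12$, which the stated theorem includes. Your logarithmic-cutoff remedy is the right idea (it is how the density $\overline{\mathcal D(\Omega)}=H^{1/2}(\Omega)$ is usually proved, and the result is true at $s=\tfrac12$ precisely because $H^{1/2}\ne H^{1/2}_{00}$), but you explicitly leave the bookkeeping --- the smallness of $1-\theta_{\epsilon,\delta}$ only away from the axes, the interaction of the two axis layers at the corner, and the order of the limits $\epsilon\to0$ then $\delta\to0$ --- as ``where the real work lies.'' As written this is a sketch, not a proof, of the critical case. It is worth noting, however, that the paper only ever applies Theorem \ref{h0s-t} in the range $0\le s<\tfrac12$ (Theorem \ref{hst-h0s-t} and Proposition \ref{es-p} are stated there), so your argument does fully cover the range actually used downstream; to match the theorem as stated you would either have to complete the logarithmic-cutoff estimate or fall back on the citation to \cite{l2023} for $s=\tfrac12$.
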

Theorem \ref{h0s-t} is proved in \cite{l2023} for general Lipschitz domains and hence it provides a more general version of Theorem 11.5 in \cite{lm1972}, since in the latter case the domain is assumed to be bounded and with a smooth boundary (which is not the case for $\mathbb R_+^2$). Another related result is Theorem 3.40 in \cite{mclean2000}, however the proof given there assumes that the boundary of the domain (not necessarily the domain itself) is compact, which is not true for $\mathbb R_+^2$. 

\begin{theorem}\label{hst-h0s-t} 
Let $0 \leq s < \frac 12$. Then, $E_s=H_0^s(\mathbb{R}_+^2)$, and the zero extension $\tilde u$ of each $u\in E_s=H_0^s(\mathbb{R}_+^2)$ satisfies
\begin{equation}
\no{\tilde{u}}_{{H}^s(\mathbb{R}^2)}\leq c_s \no{u}_{H^s(\mathbb{R}_+^2)}, \ c_s > 0.
\end{equation}
\end{theorem}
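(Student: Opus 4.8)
The plan is to prove Theorem~\ref{hst-h0s-t} by establishing the two set inclusions $H_0^s(\mathbb R_+^2)\subseteq E_s$ and $E_s\subseteq H_0^s(\mathbb R_+^2)$, together with the stated norm bound. First I would handle the easier inclusion $H_0^s(\mathbb R_+^2)\subseteq E_s$ with bound: for a test function $\varphi\in\mathcal D(\mathbb R_+^2)$, its zero extension $\tilde\varphi$ is smooth (the support sits strictly inside the open quadrant), so $\tilde\varphi\in H^s(\mathbb R^2)$ trivially, and moreover $\|\tilde\varphi\|_{H^s(\mathbb R^2)}=\|\varphi\|_{H^s(\mathbb R_+^2)}$ in a suitable sense because on test functions the zero-extension is an isometry for the relevant Sobolev norm up to the restriction infimum. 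Then, since $\mathcal D(\mathbb R_+^2)$ is dense in $H_0^s(\mathbb R_+^2)$ by definition~\eqref{h0s-def}, the extension-by-zero map $u\mapsto\tilde u$, being bounded on the dense subspace, extends continuously to all of $H_0^s(\mathbb R_+^2)$ with a constant $c_s$; one must check this continuous extension really is the pointwise zero-extension, which follows because $H^s$-convergence implies $L^2$-convergence and the pointwise zero-extension is continuous on $L^2$. This gives $H_0^s(\mathbb R_+^2)\subseteq E_s$ and the inequality $\|\tilde u\|_{H^s(\mathbb R^2)}\le c_s\|u\|_{H^s(\mathbb R_+^2)}$.

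For the reverse inclusion $E_s\subseteq H_0^s(\mathbb R_+^2)$, I would argue as follows. Take $u\in E_s$, so $\tilde u\in H^s(\mathbb R^2)$. The goal is to approximate $u$ in $H^s(\mathbb R_+^2)$ by elements of $\mathcal D(\mathbb R_+^2)$. The standard strategy is a two-step procedure: (i) translate $\tilde u$ slightly into the interior of the quadrant — i.e. consider $\tilde u(\cdot - \delta e)$ for $e=(1,1)$ and small $\delta>0$ — so that the support is pushed away from the boundary $\partial\mathbb R_+^2$; this translate converges to $\tilde u$ in $H^s(\mathbb R^2)$ as $\delta\to 0^+$ by continuity of translation in $H^s$, and its restriction to $\mathbb R_+^2$ is supported in a set whose closure avoids the axes; (ii) mollify the translate with a standard mollifier of sufficiently small radius, producing a smooth function still supported inside the open quadrant, and use that mollification converges in $H^s(\mathbb R^2)$. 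One still needs a cutoff to make the support compact (not merely bounded away from the axes), which is routine since $\tilde u\in L^2$ and truncation at large radius converges in $H^s$ for $s<1/2$ — this uses that multiplication by a smooth bump is bounded on $H^s$. The composition of these approximations lands in $\mathcal D(\mathbb R_+^2)$ and converges to $u$ in $H^s(\mathbb R_+^2)$, so $u\in H_0^s(\mathbb R_+^2)$.

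The main obstacle is the interior-translation step near the corner at the origin: one must verify that shifting along the diagonal direction $e=(1,1)$ genuinely separates the support from the \emph{entire} boundary $\partial\mathbb R_+^2$, including both half-axes and the corner, uniformly, so that a subsequent mollification of fixed small radius does not reintroduce mass outside $\overline{\mathbb R_+^2}$. For a smooth convex cone like the first quadrant this is geometrically clear — the translate of $\overline{\mathbb R_+^2}$ by $\delta e$ is contained in $\{x_1\ge\delta,\,x_2\ge\delta\}$, a set at distance $\delta$ from $\partial\mathbb R_+^2$ — but it must be stated carefully precisely because the boundary is non-smooth and non-compact, which is the whole point of isolating this appendix. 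A secondary technical point is that the restriction $s<1/2$ is essential: it is exactly the threshold below which multiplication by the indicator $\mathbf 1_{\mathbb R_+^2}$ (equivalently, the characteristic function of a half-space, applied twice) is bounded on $H^s$, which is what ultimately makes the zero-extension bounded; this is where one invokes the classical fact (e.g. via the one-dimensional half-line result cited earlier from \cite{lm1972} and \cite{fhm2017}, tensorized) that $H^s(\mathbb R_+)=H_0^s(\mathbb R_+)$ for $s<1/2$. Combining the two inclusions with the norm estimate from step one completes the proof, and together with Theorem~\ref{h0s-t} it yields Proposition~\ref{es-p}.
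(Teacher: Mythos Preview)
Your argument for the inclusion $H_0^s(\mathbb R_+^2)\subseteq E_s$ with the norm bound has a genuine gap. You assert that for $\varphi\in\mathcal D(\mathbb R_+^2)$ the zero extension satisfies $\|\tilde\varphi\|_{H^s(\mathbb R^2)}=\|\varphi\|_{H^s(\mathbb R_+^2)}$ ``in a suitable sense'', but this is precisely the nontrivial content of the theorem: the right-hand side is the \emph{infimum} over all $H^s(\mathbb R^2)$ extensions, and there is no a~priori reason the particular zero extension should be comparable to this infimum. Without an actual inequality $\|\tilde\varphi\|_{H^s(\mathbb R^2)}\le c_s\|\varphi\|_{H^s(\mathbb R_+^2)}$ on test functions, the density step has nothing to extend. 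The paper fills exactly this gap by expanding $\|\tilde u\|_{H^s(\mathbb R^2)}^2$ via the Sobolev--Slobodeckij seminorm and controlling the cross term by a quarter-plane Hardy inequality (Lemma~\ref{hardy-ineq-qp-l}), proved by rotating $\mathbb R_+^2$ to a hypograph and invoking the one-dimensional Hardy inequality.

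That said, the idea you tuck away as a ``secondary technical point'' --- boundedness of multiplication by $\mathbf 1_{\mathbb R_+^2}$ on $H^s(\mathbb R^2)$ for $s<\tfrac12$, obtained by factoring $\mathbf 1_{\mathbb R_+^2}=\mathbf 1_{x_1>0}\mathbf 1_{x_2>0}$ and applying the one-dimensional half-line result twice --- is in fact a valid and more direct route, and genuinely different from the paper's. If you make this the main argument, the bound follows for \emph{every} $u\in H^s(\mathbb R_+^2)$ without density: take any extension $U\in H^s(\mathbb R^2)$, note $\tilde u=\mathbf 1_{\mathbb R_+^2}U$, bound, and pass to the infimum. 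But you must actually carry out the tensorization, e.g.\ via the equivalence $H^s(\mathbb R^2)\simeq L^2_{x_2}(\mathbb R;H^s_{x_1}(\mathbb R))\cap H^s_{x_2}(\mathbb R;L^2_{x_1}(\mathbb R))$ (which holds since $(1+k_1^2+k_2^2)^s\simeq(1+k_1^2)^s+(1+k_2^2)^s$ for $s\ge 0$), on each factor of which multiplication by $\mathbf 1_{x_1>0}$ is bounded by the $1$D result of \cite{lm1972}. Your translate-then-mollify argument for $E_s\subseteq H_0^s(\mathbb R_+^2)$ is correct and essentially coincides with the paper's Lemma~\ref{secinc}: the paper rotates to a hypograph and translates vertically, which is exactly your diagonal translation viewed in rotated coordinates.
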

As noted above, Proposition \ref{es-p} follows by combining Theorems \ref{h0s-t} and \ref{hst-h0s-t}. As Theorem \ref{h0s-t} is proved in~\cite{l2023}, it remains to establish Theorem \ref{hst-h0s-t}. 
In fact, this result is the quarter-plane analogue of  Theorem 11.4 in \cite{lm1972}, which cannot be employed for the quarter-plane because it is proved for a bounded domain with a smooth boundary. Another related result is Theorem 3.33 in \cite{mclean2000}, which is proved without the requirement of a bounded domain with a smooth boundary but does assume a compact boundary, which is not the case for $\mathbb R_+^2$. Hence, below we give a separate proof of Theorem \ref{hst-h0s-t} by suitably adapting the argument of \cite{mclean2000} after removing the compact boundary assumption.

Before proving Theorem \ref{hst-h0s-t}, we establish certain auxiliary results.
For $s\in\mathbb R$, we define the space $\tilde{H}^s(\mathbb{R}_+^2)$ as the closure of $\mathcal{D}(\mathbb{R}_+^2)$ in $H^s(\mathbb{R}^2)$, i.e.
\begin{equation}\label{hstil-def}
\tilde{H}^s(\mathbb{R}_+^2) := \overline{\mathcal{D}(\mathbb{R}_+^2)}^{H^s(\mathbb{R}^2)}.
\end{equation}

\begin{lemma}\label{secinc}	
For any $s\geq 0$,	
$\tilde{H}^s(\mathbb{R}_+^2)= H^s({\overline{\mathbb{R}_+^2}}) := \big\{u\in H^s(\mathbb{R}^2): \textnormal{supp}\, u\subseteq \overline{\mathbb{R}_+^2}\big\}$.
\end{lemma}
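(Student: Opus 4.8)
The plan is to prove the two inclusions separately, the second being the substantial part. For $\tilde H^s(\mathbb R_+^2) \subseteq H^s(\overline{\mathbb R_+^2})$, the key point is that $H^s(\overline{\mathbb R_+^2})$ is a \emph{closed} subspace of $H^s(\mathbb R^2)$: if $u_n \to u$ in $H^s(\mathbb R^2)$ with $\operatorname{supp} u_n \subseteq \overline{\mathbb R_+^2}$, then pairing against an arbitrary test function $\psi$ supported in the open set $\mathbb R^2 \setminus \overline{\mathbb R_+^2}$ gives $\langle u, \psi\rangle = \lim_n \langle u_n, \psi\rangle = 0$, so $\operatorname{supp} u \subseteq \overline{\mathbb R_+^2}$. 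Since $\mathcal D(\mathbb R_+^2) \subseteq H^s(\overline{\mathbb R_+^2})$ trivially, taking $H^s(\mathbb R^2)$-closures yields this inclusion for every $s\geq 0$.

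For the reverse inclusion $H^s(\overline{\mathbb R_+^2}) \subseteq \tilde H^s(\mathbb R_+^2)$, fix $u \in H^s(\mathbb R^2)$ with $\operatorname{supp} u \subseteq \overline{\mathbb R_+^2}$; the goal is to approximate $u$ in $H^s(\mathbb R^2)$ by elements of $\mathcal D(\mathbb R_+^2)$. I would do this in three steps, each followed by a triangle-inequality estimate. First, \emph{inward translation}: for $\varepsilon>0$ set $\tau_\varepsilon u(x) := u(x - \varepsilon(1,1))$, so that $\operatorname{supp}\tau_\varepsilon u \subseteq K_\varepsilon := \{x_1 \geq \varepsilon,\ x_2 \geq \varepsilon\}$, while by Plancherel $\no{\tau_\varepsilon u - u}_{H^s(\mathbb R^2)}^2 = \int_{\mathbb R^2} (1+|\xi|^2)^s \big|e^{-i\varepsilon(\xi_1+\xi_2)}-1\big|^2 |\mathcal F\{u\}(\xi)|^2\, d\xi \to 0$ as $\varepsilon\to 0^+$ by dominated convergence. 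Second, \emph{mollification}: with $\varepsilon$ fixed, convolve with a standard mollifier $\rho_\delta$ supported in $B(0,\delta)$ for $0<\delta<\varepsilon$; then $\rho_\delta * \tau_\varepsilon u \in C^\infty(\mathbb R^2)$ with $\operatorname{supp}(\rho_\delta*\tau_\varepsilon u) \subseteq K_\varepsilon + \overline{B(0,\delta)} \subseteq \{x_1>0,\ x_2>0\} = \mathbb R_+^2$, and $\rho_\delta*\tau_\varepsilon u \to \tau_\varepsilon u$ in $H^s(\mathbb R^2)$ as $\delta\to 0$. Third, \emph{truncation to compact support}: fix $\chi\in C_c^\infty(\mathbb R^2)$ with $\chi\equiv 1$ on $B(0,1)$ and $\operatorname{supp}\chi\subseteq B(0,2)$, put $\chi_R(x):=\chi(x/R)$; then $\chi_R(\rho_\delta*\tau_\varepsilon u) \in \mathcal D(\mathbb R_+^2)$ and $\chi_R(\rho_\delta*\tau_\varepsilon u) \to \rho_\delta*\tau_\varepsilon u$ in $H^s(\mathbb R^2)$ as $R\to\infty$. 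Chaining the three limits with an $\eta/3$ argument shows $u$ lies in the $H^s(\mathbb R^2)$-closure of $\mathcal D(\mathbb R_+^2)$, i.e.\ $u\in\tilde H^s(\mathbb R_+^2)$.

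Two points deserve care. The conceptual one is the first step: a \emph{single} fixed translation direction $(1,1)$ pushes the support of $u$ a uniform distance into the interior of $\mathbb R_+^2$, precisely because the quarter-plane is a convex cone. This is the device that replaces the partition-of-unity-plus-local-inward-normal construction used for $C^\infty$ domains or domains with compact boundary (as in \cite{mclean2000}), where no single direction is inward everywhere; it is exactly the adaptation that removes the compact-boundary hypothesis. The main technical obstacle is the convergence in the truncation step for non-integer $s$: one needs $\no{(1-\chi_R)v}_{H^s(\mathbb R^2)}\to 0$ for every $v\in H^s(\mathbb R^2)$, which I would obtain from the fact that multiplication by $1-\chi_R$ is bounded on $H^s(\mathbb R^2)$ \emph{uniformly} in $R\geq 1$ — a consequence of $H^s(\mathbb R^2)$ being a module over $C^k(\mathbb R^2)$ for any integer $k>s$ with operator norm controlled by finitely many $\sup$-norms of derivatives of the multiplier, which here are uniformly bounded since $\no{\partial^\alpha \chi_R}_{L^\infty(\mathbb R^2)} \leq C_\alpha R^{-|\alpha|}$ — combined with the density of $C_c^\infty(\mathbb R^2)$ in $H^s(\mathbb R^2)$ (for $w\in C_c^\infty$ one has $(1-\chi_R)w=0$ once $R$ exceeds the radius of $\operatorname{supp} w$) and an $\eta/2$ argument. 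With these ingredients assembled, Lemma~\ref{secinc} follows.
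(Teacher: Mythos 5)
Your proof is correct and follows essentially the same route as the paper's: push the support into the open quadrant by a small translation (your diagonal shift by $\varepsilon(1,1)$ is exactly the paper's vertical shift after its rotation to the cone $\{\xi_2<-|\xi_1|\}$), then mollify, with both convergences obtained via Plancherel and dominated convergence. Your additional truncation step with $\chi_R$ is a welcome refinement: the paper asserts that the mollified translate lies in $\mathcal D(\Omega)$ even though its support need not be compact, so your uniform-multiplier-plus-density argument for the cutoff makes that step airtight.
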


\begin{proof}
Since $\tilde H^s(\mathbb R_+^2)$ is the closure of $\mathcal D(\mathbb R_+^2)$ in $H^s(\mathbb R^2)$, if $u \in \tilde H^s(\mathbb R_+^2)$ then $u \in H^s(\mathbb R^2)$ with $\text{supp}(u) \subseteq \overline{\mathbb R_+^2}$. Hence, $u \in H^s({\overline{\mathbb R_+^2}})$ and so $\tilde H^s(\mathbb R_+^2) \subseteq H^s({\overline{\mathbb R_+^2}})$. It remains to prove the inclusion $H^s({\overline{\mathbb R_+^2}}) \subseteq \tilde H^s(\mathbb R_+^2)$.

Let $u \in H^s({\overline{\mathbb{R}_+^2}})$ and $\epsilon>0$ be arbitrary. Under the change of variables
	\begin{equation}\label{changeofvar}
			\xi_1=-\frac{1}{\sqrt{2}} \left(x_1-x_2\right),\ \xi_2=-\frac{1}{\sqrt{2}} \left(x_1+x_2\right),
		\end{equation}
which takes $x = (x_1, x_2)$ to $\xi=(\xi_1, \xi_2)$ via an anticlockwise rotation of $\frac{5\pi}{4}$, the quarter-plane  $\mathbb{R}_+^2$ is mapped to
$$
\Omega = \left\{\xi_2 < -|\xi_1|\right\} \subset\mathbb{R}^2.
$$
Importantly, $\Omega$ is described by a \textit{single inequality}  as opposed to the two inequalities describing $\mathbb R_+^2$. 
Letting $v\in H^s({\overline \Omega}) := \left\{f\in H^s(\mathbb R^2): \text{supp}(f) \subseteq \overline{\Omega}\right\}$ be the function associated with $u$ and defined by   
$$
v(\xi) = v(\xi_1, \xi_2) :=u(-\frac{1}{\sqrt{2}}(\xi_1+\xi_2), -\frac{1}{\sqrt{2}}(\xi_2-\xi_1)),
$$ 
we consider the sequence
$$
v_n(\xi) = v_n(\xi_1,\xi_2):=v(\xi_1,\xi_2+\frac{1}{n}), \quad n\in\mathbb N.
$$  
Since $v\in H^s(\mathbb{R}^2)$, we have $v_n\in H^s(\mathbb{R}^2)$ as the shift of $\xi_2$ by $\frac 1n$ does not affect $\mathbb R^2$. Moreover, since $\text{supp}(v) \subseteq \overline \Omega =  \left\{\xi_2 \leq -|\xi_1|\right\}$, we have
$$
\text{supp}(v_n)\subseteq \Big\{\xi_2 \leq -|\xi_1|-\frac{1}{n}\Big\}
$$
so that, in particular, $\text{supp}(v_n)$ is  contained in  the \textit{open set} $\{\xi_2 < -|\xi_1|\} = \Omega$. 
Next, note that
$$
\mathcal F\{v_n\}(k)
=
\int_{\mathbb R^2} e^{-ik\cdot \xi} v_n(\xi) d\xi 
=
\int_{\mathbb R^2} e^{-ik\cdot \xi} v(\xi_1, \xi_2 + \tfrac 1n) dx 
=
e^{i\frac{k_2}{n}} \mathcal F\{v\}(k)
$$
and so
$$
\no{v-v_{n}}_{H^s(\mathbb{R}^2)}^2
=
\int_{\mathbb R^2} (1+|k|^2)^s \left|\mathcal F\{v\}(k)\right|^2 \left|1-e^{i\frac{k_2}{n}}\right|^2 dk
\leq
4\no{v}_{H^s(\mathbb R^2)}^2
< \infty.
$$
Thus, by the dominated convergence theorem,  
$$
\lim_{n\to\infty} \|v-v_{n}\|_{H^s(\mathbb{R}^2)}
=
\int_{\mathbb R^2} (1+|k|^2)^s \left|\lim_{n\to\infty} \mathcal F\{v-v_n\}(k)\right|^2 dk
= 0
$$
and, therefore, given $\epsilon > 0$ there exists $n=n(\epsilon) \in \mathbb N$ such that 
\begin{equation}\label{vn-v-ineq}
\no{v-v_{n}}_{H^s(\mathbb{R}^2)}<\epsilon.
\end{equation}

Let $v_n^\rho=\eta_\rho \ast v_n$, $\xi\in \mathbb{R}^2$, where $\eta_\rho(\xi)=\rho^{-2}\eta(\xi/\rho)$, $\rho>0$, and $\eta$ is the standard mollifier.  Then, $v_n^\rho\in C^\infty(\mathbb{R}^2)$ and, for sufficiently small $\rho$, 
$$
\text{supp}(v_n^\rho) \subseteq \text{supp}(\eta_\rho) +\text{supp}(v_n)=B_\rho(0)+\text{supp}(v_n)\subset \Omega
$$ 
so that $v_n^\rho \in \mathcal D(\Omega)$  for small enough $\rho$ (here, $\mathcal D(\Omega)$ is the space of test functions on $\mathbb R^2$ with support in $\Omega$).
Since $\mathcal F\{\eta_\rho\}(k) = \mathcal F\{\eta\}(\rho k)$, we have
$$
	\|v_n^\rho-v_n\|_{H^s(\mathbb{R}^2)}^2 
	=\int_{\mathbb{R}^2}(1+|k|^2)^s\left|\mathcal F\{v_n\}(k)\right|^2 \left|\mathcal F\{\eta\}(\rho k)-1\right|^2dk.
$$
Hence, in view of the limit $\lim_{\rho \to 0^+}\mathcal F\{\eta\}(\rho k)= 1$ and the dominated convergence theorem, $v_n^\rho\rightarrow v_n$ in $H^s(\mathbb{R}^2)$ as $\rho\rightarrow 0^+$. Therefore,  for small enough $\rho$, we have $v_n^\rho \in \mathcal D(\Omega)$  such that
\begin{equation}\label{vn-phin-ineq}
	\|v_{n}-v_n^\rho\|_{H^s(\mathbb{R}^2)}<\epsilon.
\end{equation}	

Combining inequalities \eqref{vn-v-ineq} and \eqref{vn-phin-ineq}, we deduce
$$
\|v-v_n^\rho\|_{H^s({\overline{\Omega}})}  = \|v-v_n^\rho\|_{H^s(\mathbb R^2)} \le \|v-v_{n}\|_{H^s(\mathbb{R}^2)}+\|v_{n}-v_n^\rho\|_{H^s(\mathbb{R}^2)}<2\epsilon,
$$
which shows that any element of $H^s({\overline \Omega})$ can be approximated by a sequence in $\mathcal{D}(\Omega)$, i.e. $\mathcal{D}(\Omega)$ is dense in $H^s({\overline \Omega})$. But the limits of sequences in $\mathcal{D}(\Omega)$ belong to $\tilde{H}^s(\Omega)$ by definition of this space. Thus,  $H^s({\overline \Omega}) \subseteq \tilde{H}^s(\Omega)$, which implies $H^s({\overline{\mathbb R_+^2}}) \subseteq \tilde{H}^s(\mathbb R_+^2)$ after taking the rotation into account.
\end{proof}
	
Thanks to Lemma \ref{secinc}, we can prove 
\begin{lemma}\label{hst-xs-l} 
For any $s\geq 0$, we have $\tilde{H}^s(\mathbb{R}_+^2)=E_s$ where 
\\[1mm]
\textnormal{(i)} the inclusion $\tilde{H}^s(\mathbb{R}_+^2)\subseteq E_s$ is understood  in the sense that if $u\in \tilde{H}^s(\mathbb{R}_+^2)$ then $v:=u|_{\mathbb{R}_+^2}\in E_s$. In particular, $v \in L^2(\mathbb{R}_+^2)$ and its zero extension $\tilde{v}$ is equal to $u$ and hence belongs to $H^s(\mathbb{R}^2)$;
\\[1mm]
\textnormal{(ii)} the inclusion $E_s\subseteq \tilde{H}^s(\mathbb{R}_+^2)$ is understood in the sense that if $u\in E_s$ then its zero extension $\tilde{u}\in \tilde{H}^s(\mathbb{R}_+^2)$.
\end{lemma}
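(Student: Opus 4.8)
The plan is to deduce the stated identity directly from Lemma \ref{secinc}, which already identifies $\tilde H^s(\mathbb R_+^2)$ with the subspace $H^s(\overline{\mathbb R_+^2}) = \{u\in H^s(\mathbb R^2):\mathrm{supp}(u)\subseteq\overline{\mathbb R_+^2}\}$ of $H^s(\mathbb R^2)$. Once that identification is available, both inclusions reduce to bookkeeping, the only genuine point being the elementary observation that the boundary $\partial\mathbb R_+^2 = \overline{\mathbb R_+^2}\setminus\mathbb R_+^2$ (the union of the two closed coordinate half-axes together with the origin) is a Lebesgue-null subset of $\mathbb R^2$.

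First I would prove inclusion (ii). Given $u\in E_s$, the definition \eqref{xs-app-def} says precisely that the zero extension $\tilde u$ from \eqref{util-def} lies in $H^s(\mathbb R^2)$; since $\tilde u$ vanishes identically off $\mathbb R_+^2$, we have $\mathrm{supp}(\tilde u)\subseteq\overline{\mathbb R_+^2}$, so $\tilde u\in H^s(\overline{\mathbb R_+^2})$, and Lemma \ref{secinc} then yields $\tilde u\in\tilde H^s(\mathbb R_+^2)$. This is exactly the content of statement (ii).

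Next I would treat inclusion (i). Given $u\in\tilde H^s(\mathbb R_+^2)$, Lemma \ref{secinc} gives $u\in H^s(\mathbb R^2)$ with $\mathrm{supp}(u)\subseteq\overline{\mathbb R_+^2}$; hence $u$ vanishes a.e. on the open set $\mathbb R^2\setminus\overline{\mathbb R_+^2}$, and because $\partial\mathbb R_+^2$ has measure zero this forces $u=0$ a.e. on $\mathbb R^2\setminus\mathbb R_+^2$. Setting $v:=u|_{\mathbb R_+^2}\in L^2(\mathbb R_+^2)$, the zero extension $\tilde v$ of $v$ then agrees with $u$ a.e. on $\mathbb R^2$ --- on $\mathbb R_+^2$ by construction and off $\mathbb R_+^2$ because both functions vanish there a.e. --- so $\tilde v=u$ as an element of $H^s(\mathbb R^2)$. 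Therefore $\tilde v\in H^s(\mathbb R^2)$, i.e. $v\in E_s$ by \eqref{xs-app-def}, and the zero extension of $v$ is the original $u$, which is precisely statement (i).

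I do not anticipate a serious obstacle: the entire substance of the lemma is carried by Lemma \ref{secinc}, whose own proof --- rotating $\mathbb R_+^2$ to the single-inequality domain $\Omega=\{\xi_2<-|\xi_1|\}$, shifting the $\xi_2$ coordinate to push the support strictly into $\Omega$, and then mollifying --- is the technically delicate ingredient. The only place where care is needed in the present argument is to keep straight the distinction between an $H^s(\mathbb R^2)$-function supported in $\overline{\mathbb R_+^2}$ and the pair consisting of its restriction to $\mathbb R_+^2$ together with the zero extension of that restriction; the null measure of $\partial\mathbb R_+^2$ is exactly what makes these two viewpoints coincide, and it should be stated explicitly to justify the identification of norms and of the spaces.
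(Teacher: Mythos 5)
Your proposal is correct and follows essentially the same route as the paper: both inclusions are reduced to Lemma \ref{secinc}, with part (ii) argued identically and part (i) resting on the identification $\tilde v = u$. The only cosmetic difference is that the paper verifies $\tilde v = u$ by pairing against test functions, whereas you argue directly that the two functions agree a.e. because $\partial\mathbb R_+^2$ is Lebesgue-null; for $L^2$ functions these are the same observation.
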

\begin{proof} To prove the inclusion $\tilde{H}^s(\mathbb{R}_+^2)\subseteq E_s$, let $u\in \tilde{H}^s(\mathbb{R}_+^2)$ and set $v=u|_{\mathbb{R}_+^2}$.  
By the definition \eqref{hstil-def} of $\tilde{H}^s(\mathbb{R}_+^2)$, if $s\geq 0$ then $u\in L^2(\mathbb{R}^2)$ and so $v\in L^2(\mathbb{R}_+^2)$.  Let $\tilde{v}$ be the zero extension of $v$ and $\phi\in \mathcal{D}(\mathbb{R}^2)$ be an arbitrary test function. Then, 
		$$\langle\tilde{v},\phi\rangle := \int_{\mathbb{R}^2}\tilde{v}\phi dx=\int_{\mathbb{R}_+^2}v\phi dx=\int_{\mathbb{R}_+^2}u\phi dx  = \int_{\mathbb{R}^2}u\phi dx=:\langle u,\phi\rangle$$ 
with the penultimate equality thanks to Lemma \ref{secinc}. 
Hence, $\tilde{v}$ and $u$ define the same distribution and so $\tilde{v}=u$.  
		
To prove the inclusion $E_s \subseteq \tilde{H}^s(\mathbb{R}_+^2)$, let $u\in E_s$. Then, by the definition \eqref{xs-app-def} of $E_s$, we have $u\in L^2(\mathbb{R}_+^2)$ and its zero extension $\tilde{u}\in H^s(\mathbb{R}^2)$.  Moreover, by the definition of the zero extension, $\text{supp}(\tilde{u}) \subseteq \overline{\mathbb{R}_+^2}$. Hence, $\tilde{u}\in H^s({\overline{\mathbb{R}_+^2}})$ and, in view of Lemma \ref{secinc}, it follows that $\tilde{u}\in \tilde{H}^s(\mathbb{R}_+^2)$.
\end{proof}

\begin{remark}[$H^s(\overline{\mathbb R_+^2})$ versus $H_0^s(\mathbb R_+^2)$]
For $s\geq 0$, Lemmas \ref{secinc} and \ref{hst-xs-l}  imply that $H^s(\overline{\mathbb R_+^2}) = E_s$. Hence, upon proving Theorem \ref{hst-h0s-t}, we will be able to deduce that $H^s(\overline{\mathbb R_+^2}) = H_0^s(\mathbb R_+^2)$ for all $s\geq 0$. 
\end{remark}

Our final observation before proceeding to the proof of  Theorem \ref{hst-h0s-t} is the following:
\begin{lemma}\label{hst-h0s-l}
For any $s\in\mathbb R$, we have the inclusion $\tilde{H}^s(\mathbb{R}_+^2)\subseteq H_0^s(\mathbb{R}_+^2)$,  in the sense that if $u\in \tilde{H}^s(\mathbb{R}_+^2)$ then the restriction $u|_{\mathbb{R}_+^2}\in H_0^s(\mathbb{R}_+^2)$.  
	 \end{lemma}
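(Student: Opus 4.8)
The plan is to exploit the elementary fact that the restriction operator is a contraction between the whole-space and the half-space Sobolev scales, and then to transport an approximating sequence across it. Denote by $R$ the restriction map $\varphi \mapsto \varphi|_{\mathbb R_+^2}$. By the very definition of the infimum (restriction) norm on $H^s(\mathbb R_+^2)$, for every $\varphi \in H^s(\mathbb R^2)$ one has $\no{R\varphi}_{H^s(\mathbb R_+^2)} = \inf\big\{\no{\psi}_{H^s(\mathbb R^2)} : \psi|_{\mathbb R_+^2} = \varphi|_{\mathbb R_+^2}\big\} \leq \no{\varphi}_{H^s(\mathbb R^2)}$, so $R : H^s(\mathbb R^2) \to H^s(\mathbb R_+^2)$ is bounded with operator norm at most $1$, and this holds for every $s\in\mathbb R$.

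With this in hand, I would take an arbitrary $u \in \tilde{H}^s(\mathbb R_+^2)$ and use the definition \eqref{hstil-def} to fix a sequence $(\varphi_n) \subset \mathcal{D}(\mathbb R_+^2)$ with $\no{\varphi_n - u}_{H^s(\mathbb R^2)} \to 0$. Applying $R$ together with the contraction estimate above yields $\no{\varphi_n|_{\mathbb R_+^2} - u|_{\mathbb R_+^2}}_{H^s(\mathbb R_+^2)} \leq \no{\varphi_n - u}_{H^s(\mathbb R^2)} \to 0$. Since each $\varphi_n$ is a test function with support in a compact subset of the open quadrant $\mathbb R_+^2$, its restriction $\varphi_n|_{\mathbb R_+^2}$ is again an element of $\mathcal{D}(\mathbb R_+^2)$ under the identification $\mathcal{D}(\mathbb R_+^2) = \mathscr{D}(\mathbb R_+^2)$ recorded earlier in the excerpt. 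Hence $u|_{\mathbb R_+^2}$ is the $H^s(\mathbb R_+^2)$-limit of a sequence in $\mathcal{D}(\mathbb R_+^2)$, which is precisely the assertion that $u|_{\mathbb R_+^2} \in \overline{\mathcal{D}(\mathbb R_+^2)}^{H^s(\mathbb R_+^2)} = H_0^s(\mathbb R_+^2)$ by the definition \eqref{h0s-def}. This gives the claimed inclusion $\tilde{H}^s(\mathbb R_+^2) \subseteq H_0^s(\mathbb R_+^2)$.

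The argument is purely soft functional analysis and I do not anticipate a genuine obstacle; the only point deserving a moment's care is the boundedness of $R$, and that is immediate from the fact that the restriction norm is defined as an infimum. I would also emphasize what is \emph{not} needed: unlike Theorems \ref{h0s-t} and \ref{hst-h0s-t}, no constraint such as $s<\tfrac12$ enters here, so the inclusion is valid for all $s\in\mathbb R$. This is exactly the generality in which the lemma is stated, and it will subsequently be combined with Lemma \ref{hst-xs-l} and Theorem \ref{h0s-t} to identify $E_s = H_0^s(\mathbb R_+^2)$ in the range $0\le s<\tfrac12$, thereby yielding Theorem \ref{hst-h0s-t}.
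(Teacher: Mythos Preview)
Your proof is correct and essentially identical to the paper's: both take an approximating sequence $\varphi_n\in\mathcal D(\mathbb R_+^2)$ converging to $u$ in $H^s(\mathbb R^2)$, use the inequality $\|\varphi_n|_{\mathbb R_+^2}-u|_{\mathbb R_+^2}\|_{H^s(\mathbb R_+^2)}\le\|\varphi_n-u\|_{H^s(\mathbb R^2)}$ (which you phrase as boundedness of the restriction operator), and conclude via the definition of $H_0^s(\mathbb R_+^2)$. The only difference is cosmetic packaging.
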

 \begin{proof}Let $u\in \tilde{H}^s(\mathbb{R}_+^2)$. Then, by the definition \eqref{hstil-def}, there is a sequence $u_n\in C_0^\infty(\mathbb{R}^2)$ with $\text{supp}(u_n)\subset \mathbb{R}_+^2$ such that $u_n\rightarrow u$ in  $H^s(\mathbb{R}^2)$.  Set $v_n=u_n|_{\mathbb{R}_+^2}$ and $v=u|_{\mathbb{R}_+^2}$. Then, $v_n\in \mathcal{D}(\mathbb{R}_+^2)$ and $v\in H^s(\mathbb{R}_+^2)$. Moreover,
	$$\|v_n-v\|_{H^s(\mathbb{R}_+^2)}\le \|u_n-u\|_{H^s(\mathbb{R}^2)}\longrightarrow 0, \ n\to \infty,$$ thus $v_n \to v$ in $H^s(\mathbb{R}_+^2)$ and so $v=u|_{\mathbb{R}_+^2}\in H_0^s(\mathbb{R}_+^2)$ by the definition \eqref{h0s-def}.
\end{proof} 

\noindent
\textit{Proof of Theorem \ref{hst-h0s-t}.} 
Lemmas  \ref{hst-xs-l} and \ref{hst-h0s-l} readily imply the inclusion $E_s=\tilde{H}^s(\mathbb{R}_+^2)\subseteq H_0^s(\mathbb{R}_+^2)$. Thus, it remains to prove the inclusion $H_0^s(\mathbb{R}_+^2) \subseteq E_s$. For this task, since $\mathcal D(\mathbb R_+^2)$ is dense in $H_0^s(\mathbb R_+^2)$ by definition of this space, without loss of generality we  assume $u\in \mathcal{D}(\mathbb{R}_+^2)$. 

By the definition of the zero extension,
$\no{\tilde{u}}_{L^2(\mathbb{R}^2)} = \no{u}_{L^2(\mathbb{R}_+^2)}$.
Moreover, if $0<s<\frac 12$, then
\begin{align}\label{longest}
\no{\tilde u}_{{H}^s(\mathbb{R}^2)}^2 
				&:=   \no{\tilde{u}}_{L^2(\mathbb{R}^2)}^2 + \iint\limits_{\mathbb{R}^2\times \mathbb{R}^2}\frac{|\tilde{u}(x)-\tilde{u}(y)|^2}{|x-y|^{2+2s}}dxdy
				\nn\\
				&=   \no{u}_{L^2(\mathbb{R}_+^2)}^2 + \iint\limits_{\mathbb{R}_+^2\times \mathbb{R}_+^2}\frac{|{u}(x)-{u}(y)|^2}{|x-y|^{2+2s}}dxdy
+ \iint\limits_{\mathbb{R}_+^2\times (\mathbb{R}^2\setminus\mathbb{R}_+^2)}\frac{|{u}(x)|^2}{|x-y|^{2+2s}}dxdy + \quad\iint\limits_{(\mathbb{R}^2\setminus\mathbb{R}_+^2)\times\mathbb{R}_+^2}\frac{|{u}(y)|^2}{|x-y|^{2+2s}}dxdy
				\nn\\
				& =: \no{u}_{H^s(\mathbb{R}_+^2)}^2+2 \int_{\mathbb{R}_+^2}|{u}(x)|^2f_s(x)dx
\end{align}
where 
$$
f_s(x) := \int_{\mathbb{R}^2\setminus\mathbb{R}_+^2}|x-y|^{-2(1+s)}dy.
$$
We claim that, for each $x=(x_1,x_2)\in \mathbb{R}_+^2$ and $\delta(x) := \min\{x_1,x_2\}$,  
\begin{equation}\label{ftheta}
f_s(x)\leq \frac \pi s \left(\delta(x)\right)^{-2s}.
\end{equation}
Indeed, for each $x\in\mathbb R_+^2$, the complement $B^c_{\delta(x)}(x)$ of the ball $B_{\delta(x)}(x)$ centered at $x$ and of radius $\delta(x)$ contains the set $\mathbb R^2 \setminus \mathbb R_+^2$.  Thus,  
$$ f_s(x)\le \int_{B^c_{\delta(x)}(x)}|x-y|^{-2(1+s)}dy=\int_0^{2\pi}\int_{\delta(x)}^\infty r^{-2s-1}drd\varphi=\frac{\pi}{s}\left(\delta(x)\right)^{-2s},
\quad
x\in \mathbb{R}_+^2.$$
Combining \eqref{longest} with \eqref{ftheta}, we obtain
\begin{equation}\label{prel}
\no{\tilde u}_{\tilde{H}^s(\mathbb{R}_+^2)}^2
\leq
\no{u}_{H^s(\mathbb{R}_+^2)}^2+\frac{2\pi}{s}  \int_{\mathbb{R}_+^2}  \left(\delta(x)\right)^{-2s} |{u}(x)|^2 dx.
\end{equation}
The following lemma allows us to estimate the integral remaining on the right-hand side.
\begin{lemma}\label{hardy-ineq-qp-l}
If $0<s<\frac 12$ and $u\in\mathcal D(\mathbb{R}_+^2)$, then		\begin{equation}\label{fthetaest}
			\int_{\mathbb{R}_+^2}\left(\delta(x)\right)^{-2s}|u(x)|^2dx\lesssim \|u\|_{H^s(\mathbb{R}_+^2)}^2.
\end{equation} 
\end{lemma}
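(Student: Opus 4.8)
The plan is to deduce \eqref{fthetaest} from a one-dimensional fractional Hardy inequality on coordinate slices. First I would record that for $x=(x_1,x_2)\in\mathbb R_+^2$ the boundary distance is $\delta(x)=\mathrm{dist}(x,\partial\mathbb R_+^2)=\min\{x_1,x_2\}$, whence
\begin{equation*}
\left(\delta(x)\right)^{-2s}=\max\{x_1^{-2s},x_2^{-2s}\}\leq x_1^{-2s}+x_2^{-2s};
\end{equation*}
by the symmetry $x_1\leftrightarrow x_2$ it then suffices to prove $\int_{\mathbb R_+^2}x_1^{-2s}|u(x)|^2\,dx\lesssim\|u\|_{H^s(\mathbb R_+^2)}^2$.

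For each fixed $x_2>0$ the slice $u(\cdot,x_2)$ lies in $\mathcal D(\mathbb R_+)$, so I would invoke the classical one-dimensional fractional Hardy inequality, valid precisely in the subcritical range $0<s<\tfrac12$ (the weight $x_1^{-2s}$ being integrable at the origin exactly when $2s<1$), see e.g. \cite{lm1972,mclean2000}:
\begin{equation*}
\int_0^\infty x_1^{-2s}\,|u(x_1,x_2)|^2\,dx_1\lesssim\int_0^\infty\!\int_0^\infty\frac{|u(x_1,x_2)-u(y_1,x_2)|^2}{|x_1-y_1|^{1+2s}}\,dx_1\,dy_1 .
\end{equation*}
Integrating in $x_2\in(0,\infty)$ reduces the task to the one-directional Gagliardo bound
\begin{equation}\label{dir-bd}
\int_0^\infty\!\int_0^\infty\!\int_0^\infty\frac{|u(x_1,x_2)-u(y_1,x_2)|^2}{|x_1-y_1|^{1+2s}}\,dx_1\,dy_1\,dx_2\lesssim\|u\|_{H^s(\mathbb R_+^2)}^2 .
\end{equation}

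The estimate \eqref{dir-bd} is the crux. On $\mathbb R^2$ it would be immediate, since its left-hand side equals $c_s\int_{\mathbb R^2}|k_1|^{2s}|\mathcal F\{u\}(k)|^2\,dk\leq c_s\int_{\mathbb R^2}(1+|k_1|^2+|k_2|^2)^s|\mathcal F\{u\}(k)|^2\,dk$; the difficulty is transferring this to the quarter-plane, where the corner at the origin blocks a slicewise zero-extension argument. I would instead use a genuine bounded extension operator: after the rotation \eqref{changeofvar}, $\mathbb R_+^2$ becomes the subgraph $\{\xi_2<-|\xi_1|\}$ of the globally $1$-Lipschitz function $\xi_1\mapsto-|\xi_1|$, hence a special (minimally smooth) Lipschitz domain to which Stein's extension theorem applies without any compactness hypothesis on the boundary, producing $U\in H^s(\mathbb R^2)$ with $U|_{\mathbb R_+^2}=u$ and $\|U\|_{H^s(\mathbb R^2)}\lesssim\|u\|_{H^s(\mathbb R_+^2)}$. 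Since $U(x_1,x_2)=u(x_1,x_2)$ whenever $x_1,x_2>0$, the left side of \eqref{dir-bd} is at most $\int_{\mathbb R}\int_{\mathbb R}\int_{\mathbb R}|x_1-y_1|^{-1-2s}|U(x_1,x_2)-U(y_1,x_2)|^2\,dx_1\,dy_1\,dx_2$, and by Plancherel in $x_1$ and Fubini this equals $c_s\|D_{x_1}^sU\|_{L^2(\mathbb R^2)}^2\leq c_s\|U\|_{H^s(\mathbb R^2)}^2\lesssim\|u\|_{H^s(\mathbb R_+^2)}^2$, giving \eqref{dir-bd} and with it Lemma \ref{hardy-ineq-qp-l}.

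The main obstacle is exactly this passage from $\mathbb R_+^2$ to $\mathbb R^2$: every elementary device (zero extension in one variable, iterated reflection) ultimately demands controlling an anisotropic fractional norm by the isotropic norm $\|\cdot\|_{H^s(\mathbb R_+^2)}$, which cannot be done directly and forces the use of a bounded extension operator; the point that makes it work is the same one used in Lemma \ref{secinc}, namely that the rotated quarter-plane is the subgraph of a single globally Lipschitz function, so Stein's extension operator is available despite the corner and the unbounded boundary. As a fallback, \eqref{fthetaest} also follows at once from the fractional Hardy inequality on open sets with thick complement (here $\mathbb R^2\setminus\mathbb R_+^2$ contains a half-plane) applied to the bound $[u]_{H^s(\mathbb R_+^2)}^2\leq\|u\|_{H^s(\mathbb R_+^2)}^2$ and the identity $\delta(x)=\mathrm{dist}(x,\partial\mathbb R_+^2)$, but the slicing route above keeps the appendix self-contained.
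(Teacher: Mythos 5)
Your proof is correct, and its engine is the same as the paper's: a slice-wise one-dimensional fractional Hardy inequality (valid precisely for $0<s<\tfrac12$), followed by control of a one-directional Gagliardo seminorm via an extension to $\mathbb R^2$ and Plancherel. The differences are in the setup. You split the weight via $\left(\delta(x)\right)^{-2s}=\max\{x_1^{-2s},x_2^{-2s}\}\leq x_1^{-2s}+x_2^{-2s}$ and apply Hardy in each coordinate direction on the untransformed quarter-plane, whereas the paper first performs the rotation \eqref{changeofvar} so that $\mathbb R_+^2$ becomes the subgraph $\{\xi_2<-|\xi_1|\}$, compares $d(\xi)$ with the vertical distance $|h(\xi_1)-\xi_2|$ to the graph, and applies Hardy once in the transverse variable. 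Your version is slightly more elementary (no rotation, no distance comparison) at the cost of two applications of Hardy instead of one. For the extension step, Stein's theorem is not actually needed: since the paper defines $\|u\|_{H^s(\mathbb R_+^2)}$ as the infimum of $\|U\|_{H^s(\mathbb R^2)}$ over extensions, a near-optimal extension exists by definition, and this is exactly what the paper uses (it bounds the directional seminorm over the subgraph by the corresponding seminorm over $\mathbb R^2$ for an arbitrary extension $W$, then takes the infimum). Invoking Stein is harmless --- and would become necessary if the quarter-plane norm were taken to be the intrinsic Gagliardo norm, as in display \eqref{longest} --- but it is heavier machinery than the argument requires. You also correctly identify, and avoid, the circularity that a zero-extension argument would introduce at this point.
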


We prove Lemma \ref{hardy-ineq-qp-l} after the end of the current proof. Back to inequality \eqref{prel}, employing estimate \eqref{fthetaest} we deduce
\begin{equation}
\no{\tilde u}_{\tilde{H}^s(\mathbb{R}_+^2)}^2
\lesssim
\no{u}_{H^s(\mathbb{R}_+^2)}^2+\frac{2\pi}{s}  \no{u}_{H^s(\mathbb{R}_+^2)}^2
\lesssim
 \no{u}_{H^s(\mathbb{R}_+^2)}^2
\end{equation}
which shows that $u \in E_s$ and hence establishes the desired inclusion $H_0^s(\mathbb R_+^2) \subseteq E_s$. The proof of Theorem \ref{hst-h0s-t} is complete. \hfill $\square$

The only  task remaining is to prove Lemma \ref{hardy-ineq-qp-l}, which actually provides the quarter-plane version of a well-known result known as Hardy's inequality that reads as follows:
\begin{lemma}[Lemma 3.31 in \cite{mclean2000} (see also Theorem 1.76 in \cite{l2023} and Theorem 3.4.2 in \cite{a2015})]\label{hardy-ineq-l}\
\\
If $0<s<\frac 12$ and $\phi\in \mathcal{D}(\mathbb R)$, then
\begin{equation}
\int_0^\infty \tau^{-2s}|\phi(\tau)|^2d\tau
\leq 
c_s \int_{0}^{\infty}\int_{0}^{\infty}\frac{|\phi(\tau_1)-\phi(\tau_2)|^2}{|\tau_1-\tau_2|^{1+2s}}\, d\tau_1 d\tau_2.
\end{equation}
\end{lemma}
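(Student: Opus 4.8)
\noindent\textit{Proof proposal.}
The plan is to reduce the half-line fractional Hardy inequality to its classical whole-line counterpart by means of an even reflection. Given $\phi\in\mathcal D(\mathbb R)$, set $\Psi(x):=\phi(|x|)$ for $x\in\mathbb R$; since $\phi$ is smooth and compactly supported, $\Psi$ is Lipschitz, even and compactly supported, hence $\Psi\in H^s(\mathbb R)$ for every $0<s<1$, and by symmetry
$$
\int_{\mathbb R}|x|^{-2s}|\Psi(x)|^2\,dx
=
2\int_0^\infty\tau^{-2s}|\phi(\tau)|^2\,d\tau .
$$
Thus it suffices to bound the left-hand side by $[\phi]^2:=\int_0^\infty\!\!\int_0^\infty|\phi(\tau_1)-\phi(\tau_2)|^2|\tau_1-\tau_2|^{-1-2s}\,d\tau_1\,d\tau_2$.

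First I would show that the whole-line Gagliardo seminorm of $\Psi$ is controlled by $[\phi]^2$, namely
$$
\int_{\mathbb R}\!\!\int_{\mathbb R}\frac{|\Psi(x)-\Psi(y)|^2}{|x-y|^{1+2s}}\,dy\,dx\ \leq\ 4\,[\phi]^2 .
$$
This follows by splitting the double integral according to the signs of $x$ and $y$: on $\{x,y>0\}$ the integrand is literally that of $[\phi]^2$, and the same is true on $\{x,y<0\}$ after the substitution $(x,y)\mapsto(-x,-y)$; on the mixed piece $\{x>0>y\}$ one has $\Psi(x)-\Psi(y)=\phi(x)-\phi(-y)$ together with $|x-y|=x+|y|\ge\big|\,x-|y|\,\big|$, so the integrand is dominated by $|\phi(x)-\phi(-y)|^2\big|\,x-|y|\,\big|^{-1-2s}$, which integrates to $[\phi]^2$ after $y\mapsto-y$; the remaining piece $\{y>0>x\}$ is handled symmetrically. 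Adding the four contributions gives the factor $4$.

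Next I would invoke the classical one-dimensional whole-line fractional Hardy inequality: for $0<s<\tfrac12$ and every $\Psi\in H^s(\mathbb R)$,
$$
\int_{\mathbb R}|x|^{-2s}|\Psi(x)|^2\,dx
\ \leq\
c_s\int_{\mathbb R}\!\!\int_{\mathbb R}\frac{|\Psi(x)-\Psi(y)|^2}{|x-y|^{1+2s}}\,dy\,dx ,
$$
which is equivalent, via the identity $\int_{\mathbb R}\!\int_{\mathbb R}\frac{|\Psi(x)-\Psi(y)|^2}{|x-y|^{1+2s}}\,dy\,dx\simeq\no{|\xi|^s\what\Psi}_{L^2(\mathbb R)}^2$ valid for $0<s<1$, to the Hardy--Rellich inequality $\no{|x|^{-s}\Psi}_{L^2(\mathbb R)}\lesssim\no{|\xi|^s\what\Psi}_{L^2(\mathbb R)}$ valid for $0\le s<\tfrac12$; see the references cited in the statement of the lemma. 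Chaining the three displayed inequalities yields $2\int_0^\infty\tau^{-2s}|\phi|^2\le 4c_s[\phi]^2$, which is the claim.

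The only genuinely substantive ingredient in this route is the whole-line Hardy inequality itself; everything else is bookkeeping with the reflection. If a fully self-contained proof is desired, one can dispense with any Hardy-type citation and argue dyadically: write $(0,\infty)=\bigcup_{j\in\mathbb Z}I_j$ with $I_j=[2^j,2^{j+1})$, use $\tau^{-2s}\le 2^{-2js}$ on $I_j$, decompose $\phi=(\phi-m_j)+m_j$ on $I_j$ with $m_j:=|I_j|^{-1}\int_{I_j}\phi$, control $2^{-2js}\int_{I_j}|\phi-m_j|^2$ by the scale-invariant fractional Poincar\'e estimate $\int_{I_j}|\phi-m_j|^2\lesssim 2^{2js}\int_{I_j}\!\int_{I_j}\frac{|\phi(x)-\phi(y)|^2}{|x-y|^{1+2s}}$ and sum over the disjoint $I_j$, and finally bound $\sum_j 2^{(1-2s)j}|m_j|^2$ using the telescoping identity $m_j=-\sum_{k\ge j}(m_{k+1}-m_k)$ (legitimate because $m_k=0$ for $k$ large), the estimate $|m_{k+1}-m_k|^2\lesssim 2^{(2s-1)k}\int_{I_k}\!\int_{I_{k+1}}\frac{|\phi(x)-\phi(y)|^2}{|x-y|^{1+2s}}$, a discrete Young/Schur inequality whose kernel $\ell\mapsto 2^{-(1-2s)\ell/2}$ is summable precisely because $s<\tfrac12$, and the pairwise disjointness of the products $I_k\times I_{k+1}$ inside $(0,\infty)^2$. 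In this self-contained route the delicate point is the telescoping control of the means $m_j$ near $\tau=0$, which is exactly where the hypothesis $s<\tfrac12$ enters.
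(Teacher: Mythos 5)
Your proposal is correct. Note first that the paper does not actually prove this lemma: it is quoted verbatim from the literature (Lemma 3.31 of McLean's book, with Leoni and Adams as alternatives), so there is no in-paper argument to compare against; what you have supplied is a proof where the paper supplies a citation. Your main route — even reflection $\Psi(x)=\phi(|x|)$, the identity $\int_{\mathbb R}|x|^{-2s}|\Psi|^2=2\int_0^\infty\tau^{-2s}|\phi|^2$, the four-quadrant splitting of the whole-line Gagliardo seminorm with the elementary bound $|x-y|=x+|y|\ge\big|x-|y|\big|$ on the mixed quadrants, and then the whole-line fractional Hardy inequality — is sound; the only caveat is that it transfers the entire analytic content onto the whole-line inequality $\no{|x|^{-s}\Psi}_{L^2}\lesssim \no{\Psi}_{\dot H^s(\mathbb R)}$, which is itself a cited standard fact (valid in one dimension exactly for $0\le s<\tfrac12$, which is consistent with the hypothesis), so in spirit this route replaces one citation by another plus elementary bookkeeping. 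Your second, dyadic route is the genuinely self-contained one and is essentially the textbook proof of the half-line fractional Hardy inequality: the scale-invariant fractional Poincar\'e estimate on each $I_j$, the telescoping of the dyadic means (legitimate by compact support), and the Schur/Young summation with kernel $2^{-(1/2-s)\ell}$ — summable precisely because $s<\tfrac12$ — are all correctly identified, and you have correctly located where the hypothesis $s<\tfrac12$ enters. Either route is acceptable as a replacement for the citation.
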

	
In what follows, we will combine Lemma \ref{hardy-ineq-l} with the change of variables \eqref{changeofvar} in order to prove 	Lemma \ref{hardy-ineq-qp-l}.
\begin{proof}[Proof of Lemma \ref{hardy-ineq-qp-l}]
Making the change of variables \eqref{changeofvar}, which rotates the quarter-plane $\mathbb R_+^2$ by an angle of $\frac{5\pi}{4}$  in the anticlockwise direction, we have
\begin{equation}\label{fthetaest2}
\int_{\mathbb{R}_+^2}\left(\delta(x)\right)^{-2s}|v(x)|^2dx
=
\int_{\Omega}\left(d(\xi)\right)^{-2s}|w(\xi)|^2d\xi
\end{equation}  
where $\Omega = \left\{\xi_2<h(\xi_1):=-|\xi_1|\right\} \subset \mathbb{R}^2$ is the image of  $\mathbb R_+^2$ under \eqref{changeofvar} and 
$$
d(\xi):=\frac{1}{\sqrt{2}}\min\left\{-\left(\xi_1+\xi_2\right), \xi_1-\xi_2\right\}>0,
\quad
w(\xi):=v(-\frac{1}{\sqrt{2}}(\xi_1+\xi_2), -\frac{1}{\sqrt{2}}(\xi_2-\xi_1)) \in \mathcal D(\Omega).
$$
Note that $h$ is a Lipschitz function with Lipschitz constant equal to $1$, since by the triangle inequality		
$$
|h(\xi_1)-h(\xi_2)|=\left|-|\xi_1|+|\xi_2|\right|\le |\xi_1-\xi_2|.
$$ 
Further,  for any $\eta = (\eta_1, \eta_2) \in \partial\Omega$ (so that $\eta_2 = h(\eta_1) = -|\eta_1|$), 
\begin{equation*}
\begin{aligned}
|h(\xi_1)-\xi_2|&=|\eta_2-\xi_2+h(\xi_1)-h(\eta_1)|\le |\eta_2-\xi_2|+|h(\xi_1)-h(\eta_1)| = |\eta_2-\xi_2|+\big|-|\xi_1| + |\eta_1|\big|
\\
&\le |\eta_2-\xi_2|+|\eta_1-\xi_1| \leq \sqrt 2 \left(|\eta_2-\xi_2|^2+|\eta_1-\xi_1|^2\right)^{\frac 12} = \sqrt{2} \left|\eta-\xi\right|,\quad  \xi\in \Omega.
\end{aligned}
\end{equation*} 
Choosing $\eta$ to be the projection of $\xi$ on $\xi_2 = -|\xi_1|$, i.e. on $\xi_1=\xi_2$ (for $\xi_1<0$) and $\xi_1=-\xi_2$ (for $\xi_1>0$), it follows that $|h(\xi_1)-\xi_2|\leq \sqrt{2} \, d(\xi)$ for $\xi\in \Omega$. Therefore, 	
\begin{equation}\label{a18-0}
\int_{\Omega}\left(d(\xi)\right)^{-2s}|w(\xi)|^2d\xi\le 2^s\int_{\xi_2<h(\xi_1)}|h(\xi_1)-\xi_2|^{-2s}|w(\xi)|^2d\xi=:I.
\end{equation}
Changing variable via $\tau=h(\xi_1)-\xi_2$, we get
\begin{equation}\label{a18}
I=2^s\int_{\xi_1\in \mathbb{R}}\int_{\tau=0}^\infty \tau^{-2s}|w(\xi_1,h(\xi_1)-\tau)|^2d\tau d\xi_1.
\end{equation}
Hence, invoking the one-dimensional Hardy's inequality of Lemma \ref{hardy-ineq-l} for estimating the $\tau$ integral, we have
\begin{equation}\label{a18-1}
I\lesssim \int_{\xi_1\in\mathbb{R}}\int_{\xi_2<h(\xi_1)}\int_{y<h(\xi_1)} \frac{|w(\xi_1,y)-w(\xi_1,\xi_2)|^2}{|y-\xi_2|^{1+2s}}dyd\xi_2d\xi_1=:II.
\end{equation}

Recalling that $w \in \mathcal D(\Omega)$, let $W\in \mathcal{D}(\mathbb{R}^2)$ be such that $W|_{\Omega}=w$. Then, we have 
		\begin{equation*}
			\begin{split}
				&II=\int_{\xi_1\in\mathbb{R}}\int_{\xi_2\in \mathbb{R}}\int_{y\in \mathbb{R}}\frac{|W(\xi_1,y)-W(\xi_1,\xi_2)|^2}{|y-\xi_2|^{1+2s}}dyd\xi_2d\xi_1\\
				&\overset{y= \xi_2+h}{=}\int_{h\in \mathbb{R}}\int_{\xi_1\in\mathbb{R}}\int_{\xi_2\in \mathbb{R}}\frac{|W(\xi_1,\xi_2+h)-W(\xi_1,\xi_2)|^2}{|h|^{1+2s}}d\xi_2d\xi_1dh
\\
&=\int_{h\in \mathbb{R}}\frac{\|W(\cdot,\cdot+h)-W(\cdot,\cdot)\|_{L^2(\mathbb{R}^2)}^2}{|h|^{1+2s}}dh.
			\end{split}
		\end{equation*}
		Proceeding like in the proof of  the time estimate for the Schr\"odinger initial value problem (see Theorem 3.2 in~\cite{hm2020}), from Plancherel's theorem we have 
\begin{equation*}
\begin{split}
\no{W(\cdot,\cdot+h)-W(\cdot,\cdot)}_{L^2(\mathbb{R}^2)}^2
&=
\frac{1}{(2\pi)^2}
\no{\mathcal{F}\left\{W(\cdot,\cdot+h)-W(\cdot,\cdot)\right\}}_{L^2(\mathbb{R}^2)}^2
\\
&=
\frac{1}{(2\pi)^2}
\int_{\mathbb{R}^2}|\mathcal F\{W\}(\tau)|^2 \left|e^{i\tau_2h}-1\right|^2 d\tau.
\end{split}
\end{equation*}
It then follows that
\begin{equation}\label{a21-0}
\begin{aligned}
II&=
\frac{1}{(2\pi)^2}
\int_{\mathbb{R}^2}|\mathcal F\{W\}(\tau)|^2\int_{h\in \mathbb{R}} \frac{|e^{i\tau_2h}-1|^2}{|h|^{1+2s}}dhd\tau
\\
&=
c_s \int_{\mathbb{R}^2}|\tau_2|^{2s}|\mathcal F\{W\}(\tau)|^2d\tau
\leq 
c_s \|W\|_{H^s(\mathbb{R}^2)}^2
\end{aligned}
\end{equation} 
where $$c_s:= \frac{1}{(2\pi)^2} \int_{h\in \mathbb{R}} \frac{|e^{ih'}-1|^2}{|h'|^{1+2s}}dh'<\infty.$$

Estimate \eqref{a21-0} has been proved for any $W\in \mathcal D(\mathbb R^2)$. Since this space is dense in $H^s(\mathbb R^2)$, for any $W \in H^s(\mathbb R^2)$ such that $W|_\Omega = w$ we can find a sequence $W_n \in \mathcal D(\mathbb R^2)$ such that $W_n|_\Omega = w$ and $W_n \to W$ in $H^s(\mathbb R^2)$. Thus, employing~\eqref{a21-0} for $W_n$ in place of $W$ and then taking $n\to\infty$, we obtain
\begin{equation}\label{Ww-0}
II \leq c_s \no{W}_{H^s(\mathbb R^2)}^2
\end{equation}
for any $W \in H^s(\mathbb R^2)$ such that $W|_{\Omega} = w$. Thus, since 
$
\no{w}_{H^s(\Omega)} := \inf\big\{\no{W}_{H^s(\mathbb R^2)}: W|_\Omega = w\big\},
$
taking infimum on both sides of inequality \eqref{Ww-0} we deduce
\begin{equation}\label{Ww}
II \leq c_s \no{w}_{H^s(\Omega)}^2 = c_s \no{v}_{H^s(\mathbb{R}_+^2)}^2.
\end{equation}
Combining \eqref{fthetaest2}, \eqref{a18-0}, \eqref{a18}, \eqref{a18-1} and \eqref{Ww} leads to the desired result of Lemma~\ref{hardy-ineq-qp-l}.
\end{proof}

 \bibliographystyle{myamsalpha}
 \bibliography{references.bib}

\end{document}